\theoremstyle{plain}
\newtheorem{definition}{Definition}[section]
\newtheorem{theorem}[definition]{Theorem}
\newtheorem{corollary}[definition]{Corollary}
\newtheorem{lemma}[definition]{Lemma}
\newtheorem{proposition}[definition]{Proposition}
\newtheorem{remark}[definition]{Remark}
\newtheorem*{theorem*}{Theorem}
\newcommand{\Z}{\mathbb{Z}}
\newcommand{\Q}{\mathbb{Q}}
\newcommand{\Hom}{{\rm Hom}}
\newcommand{\Ext}{{\rm Ext}}
\newcommand{\Tor}{{\rm Tor}}
\newcommand{\Rmod}{R\text{-}{\rm Mod}}
\newcommand{\FP}[1]{\rm{FP}_{#1}}
\newcommand{\FPinj}[1]{\rm{FP}_{#1}\text{-}{\rm Inj}}
\newcommand{\FPflat}[1]{\rm{FP}_{#1}\text{-}{\rm Flat}}
\newcommand{\FPid}[1]{\rm{FPdim}({#1})}
\newcommand{\kker}[1]{\ensuremath{{{\rm Ker}\left(#1\right)}}}
\begin{document}
\title[$\FP{n}$-injective/flat and Gorenstein AC-flat covers and preenvelopes.]{$\FP{n}$-injective and $\FP{n}$-flat covers and preenvelopes, and Gorenstein AC-flat covers.}

\author{Daniel Bravo}
\address{
 Instituto de Ciencias F\'isicas y Matem\'aticas,
 Facultad de Ciencias,
 Universidad Austral de Chile,
 Valdivia, Chile}
 \email{daniel.bravo@uach.cl}

\author{Sergio Estrada}
\address{
Departamento de Matem\'aticas,
Universidad de Murcia,
Campus de Espinardo,
30100 Murcia, Spain
}
 \email{sestrada@um.es}

\author{ Alina Iacob}
\address{
Department of Mathematical Sciences,
Georgia Southern University,
Statesboro, GA 30460, U.S.A
}
 \email{aiacob@georgiasouthern.edu}

\date{\today}

\subjclass[2010]{18G25; 18G35}

\keywords {$\FP{n}$-injective module, $\FP{n}$-flat module, Ding injective module, Gorenstein AC-flat module}

\thanks{This research has started while the authors visited the Banff International Research Station for Mathematical Innovation and Discovery (BIRS), under the ``Research in Teams" program in May 2016 (http://www.birs.ca/events/2016/research-in-teams/16rit690). The hospitality and support of BIRS is acknowledged with gratitude. Part of the research was carried out when Sergio Estrada visited Georgia Southern University, he wishes to thank the Department of Mathematical Sciences at Georgia Southern University for the support during his stay in May 2017}

\begin{abstract}
We prove that, for any $n \geq 2$, the classes of $\FP{n}$-injective modules and of $\FP{n}$-flat modules are both covering and preenveloping over any ring $R$. This includes the case of $FP_{\infty}$-injective and $FP_{\infty}$-flat modules (i.e. absolutely clean and, respectively, level modules).
Then we consider a generalization of the class of (strongly) Gorenstein flat modules - the (strongly) Gorenstein AC-flat modules (cycles of exact complexes of flat modules that remain exact when tensored with any absolutely clean module). We prove that some of the properties of  Gorenstein flat modules extend to the class of Gorenstein AC-flat modules; for example we show that this class is precovering over any ring $R$.  We also show that (as in the case of Gorenstein flat modules) every Gorenstein AC-flat module is a direct summand of a strongly Gorenstein AC-flat module. When $R$ is such that the class of Gorenstein AC-flat modules is closed under extensions, the converse is also true. We also prove that if the class of Gorenstein AC-flat modules is closed under extensions, then this class of modules is covering.
\end{abstract}

\maketitle

\section{Introduction}

It is well known that finitely generated and finitely presented modules (over a ring $R$) play an important part in homological algebra.
%For example, the injectivity property of modules can be tested with respect to finitely generated modules only, and a module $M$ is finitely presented if and only if the functor $\Hom_R(M, -)$ commutes with direct limits.

In \cite{bravo:15:fp} the authors  study a refinement of the class of finitely presented modules called \emph{finitely $n$-presented modules}; we will denote this class by $\FP{n}$. Following the characterization of noetherian and coherent rings in terms of these finitely $n$-presented modules,  they  generalize the notions of noetherian and coherent rings to what they call \emph{n-coherent rings}; see Section \ref{Preliminaries} for details. The $\FP{n}$-modules are also used to define the \emph{$\FP{n}$-injective } and the \emph{$\FP{n}$-flat} modules. These are the modules that have a vanishing property with respect to the functors $\Ext^1_R(M,-)$ and $\Tor^R _1 (M,-)$, whenever $M \in \FP{n}$. Thus they are generalizations of the injective and flat modules.

In the first part of this paper, we continue the study of these classes of modules. We show that for any $n \geq 2$, the class of $\FP{n}$-injective modules is both \emph{preenveloping} and \emph{covering} over an arbitrary ring $R$, thus improving  \cite[Theorem 4.4]{bravo:15:fp}. We also prove a similar result for the class of $\FP{n}$-flat modules, namely, that this class is both covering and {preenveloping} over any ring $R$; this improves \cite[Theorem 4.5]{bravo:15:fp} by showing the preenveloping property. Our results also include the case of $FP_{\infty}$-flat (i.e. the \emph{level} modules introduced in \cite{gillespie:14:stable}) and that of $FP_{\infty}$-injective modules (i.e. the \emph{absolute clean} modules introduced also in \cite{gillespie:14:stable}). We also give a sufficient condition for the existence of $\FPinj{n}$-envelopes. Also, a necessary and sufficient condition is given so that every module have an epimorphic $\FPflat{n}$-envelope.
%Thus we improve on \cite{bravo:15:fp}, Corollary 4.2, which established the existence of special $\FP{n}$-injective preenvelopes.
%If moreover $R$ is an n-coherent ring (with $n \ge 2$) then we prove that $(\FP{n} $ flat, $\FP{n}$ flat $^\bot)$ is a hereditary perfect cotorsion pair.\\

We use {duality pairs} in order to prove some of the results mentioned above. This notion was introduced by Holm and J{\o}rgensen in \cite{holm:09:duality}.
%The pair $(^\bot \FP{n} $ inj, $\FP{n}$ inj $)$ has the same properties over an n-coherent ring $R$.
%So w\\ We use duality pairs to prove the existence of these covers and envelopes. This notion was introduced by Holm and Jorgensen in \cite{holm:09:duality}.
Recall that a pair of classes of modules $(\mathcal{M}, \mathcal{C})$ is a \emph{duality pair} if it satisfies the following conditions:
\begin{enumerate}
\item $M \in \mathcal{M}$ if and only if $M^+ \in \mathcal{C}$.
\item $\mathcal{C}$ is closed under direct summands and under finite direct sums.
\end{enumerate}
As usual the Pontryagin dual of a left (respectively right) $R$-module  $M$ is defined as the right (respectively left) $R$-module $M^+ = \Hom_{\Z}(M, \Q/\Z)$. So in particular, if $\mathcal{M}$ is a class of left modules, then  $\mathcal{C}$ is a class of right modules. Holm and J{\o}rgensen also gave in  \cite{holm:09:duality} sufficient conditions for the class $\mathcal{M}$ be preenveloping and also to be precovering. These results proved to be very useful in studying the existence of precovers and preenvelopes with respect to various classes of modules; see for example \cite{iacob:13:gorinj}, \cite{holm:09:duality} and \cite{iacob:15:gor.inj}.

Following the ideas of preenveloping and precovering classes, we then consider some generalizations of the Gorenstein injective and Gorenstein flat modules.  We begin by noting that the class of Ding injective modules generalizes that of the Gorenstein injective modules; respectively the class of Ding projective modules  generalizes that of the Gorenstein projective modules. These classes  were  introduced by Ding and co-authors and called strongly Gorenstein FP-injective modules  and strongly Gorenstein flat modules, respectively.  The two classes of modules have some nice properties over coherent rings, analogous to the properties of the Gorenstein projectives and Gorenstein injectives over noetherian rings. Later Gillespie renamed strongly Gorenstein flat modules as Ding projective, and Gorenstein FP-injective as Ding injective; see \cite{gillespie:10:model} for details. We prove that the class of Ding injective modules is {enveloping} over any Ding-Chen ring. Thus we improve on a result of Gang, Liu, Liang, who proved in \cite[Corollary 3.3]{yang:13:cotorsion}  that this class is \emph{special preenveloping} over any Ding-Chen ring. This was also observed in \cite{gillespie:14:stable}; see the paragraph right before the proof of  \cite[Theorem 5.5]{gillespie:14:stable}.

% We also prove that the class of Ding projective modules is covering over any Ding-Chen ring (DE DEM!)\\

We also give necessary and sufficient conditions for the class of Ding injective modules to be covering (Theorem \ref{theorem.Ding duality pair}).

 In the second part of the paper we consider a generalization of the class of Gorenstein flat modules, namely that of \emph{Gorenstein AC-flat} modules.   In  \cite{gillespie:14:stable}, the authors extend Gorenstein homological algebra to arbitrary rings. To do this, they replace the finitely generated (respectively finitely presented) modules with the modules of\emph{ type $FP_{\infty}$}; that is modules that have a resolution of finitely generated projective (or free) modules. In doing so, the injective (respectively FP-injective respectively) modules are replaced with the absolutely clean modules, and the flat modules are replaced by the level modules. In turn, the Gorenstein injective, and respectively Gorenstein projective, modules are replaced with the
 \emph{Gorenstein AC-injective}, and respectively \emph{Gorenstein AC-projective}, modules. We define the Gorenstein AC-flat modules as the cycles of exact complexes of flat modules that remain exact when tensoring with any absolutely clean module. We show that the class of Gorenstein AC-flat modules is precovering over any ring $R$
 %In \cite{estrada:16:acyclicity} we proved that the class of Gorenstein AC-flat modules is precovering over any ring $R$.
 and if the ring $R$ is such that the class of Gorenstein AC-flat modules is closed under extensions then this class is in fact covering.

We also define the notion of \emph{strongly Gorenstein AC-flat module} and we show that any Gorenstein AC-flat module is a direct summand of a strongly Gorenstein AC-flat module. When $R$ is such that the class of Gorenstein AC-flat modules is closed under extensions, the converse is also true.
% Trebuie gasite  some examples of such rings.

\section{Preliminaries} \label{Preliminaries}

Let $R$ be a (unital associative) ring and $n$ a nonnegative integer. From now on, and unless otherwise stated, by $R$-module we mean a left $R$-module.

A module $M$ is said to be \emph{finitely $n$-presented} if there exists an exact sequence
\[
F_n \rightarrow F_{n-1} \rightarrow \ldots \rightarrow F_1 \rightarrow F_0 \rightarrow M \rightarrow 0,
\]
 with each $F_i$ a finitely generated projective (or free) module. Denote by $\FP{n}$ the class of all finitely $n$-presented modules. We say that a module $M$ is in $\FP{\infty}$ if and only if $M \in \FP{n}$ for all $n \ge 0$.

It is immediate from the definition that we have the following descending chain of inclusions:
\[
\text{FP}_0 \supseteq \text{FP}_1 \supseteq \ldots \supseteq \FP{n} \supseteq \text{FP}_{n+1} \supseteq \ldots \supseteq \FP{\infty}
\]
These classes of modules were studied in \cite{bravo:15:fp} and \cite{gillespie:14:stable}. We recall some of their properties.

\begin{proposition}[{\cite[Proposition 1.7,  Theorem 1.8]{bravo:15:fp}}] \label{prop-FPn}
Let $n \ge 1$ be an integer. Then:
\begin{enumerate}[leftmargin=*]
\item The class $\FP{n}$ is closed under direct summands, extensions, and cokernels of monomorphisms.
%\item $M \in \FP{n}$ if and only if $\Ext_R^i(M, -)$ commutes with direct limits, for all $1 \le i \le n-1$.
%\item $M \in \FP{n}$ if and only if $\Tor^R _i(M,-)$ commutes with direct products, for all $1 \le i \le n-1$.
\item $\FP{\infty}$ is closed under kernels of epimorphisms.
\end{enumerate}
\end{proposition}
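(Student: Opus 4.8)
The plan is to prove both parts by induction on $n$, treating $\FP{0}$ (finitely generated modules) and $\FP{1}$ (finitely presented modules) by hand, and organising the induction around a \emph{syzygy criterion}: for $n\ge 1$, a module $M$ lies in $\FP{n}$ if and only if $M$ is finitely generated and, for some (equivalently, every) short exact sequence $0\to K\to P\to M\to 0$ with $P$ finitely generated projective, the syzygy $K$ lies in $\FP{n-1}$. The implication ``$K\in\FP{n-1}$ for one such presentation $\Rightarrow M\in\FP{n}$'' is immediate: splice a length-$(n-1)$ resolution of $K$ by finitely generated projectives onto $P\twoheadrightarrow M$. The implication ``$M\in\FP{n}\Rightarrow$ the syzygy of the defining resolution is $\FP{n-1}$'' is a reindexing of the definition. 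The passage from ``one presentation'' to ``every presentation'' is Schanuel's lemma, using that $\FP{n-1}$ is closed under finite direct sums (add resolutions) and, by the inductive hypothesis, under direct summands.

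First I would record the cheap facts used throughout: each $\FP{n}$ is closed under finite direct sums, $\FP{n}\supseteq\FP{n+1}$, and finitely generated projectives lie in $\FP{\infty}$. Closure of $\FP{n}$ under extensions then needs no induction: given $0\to A\to B\to C\to 0$ with $A,C\in\FP{n}$ and resolutions of $A$ and $C$ by finitely generated projectives truncated at degree $n$, the horseshoe construction produces an exact $P_n\oplus Q_n\to\cdots\to P_0\oplus Q_0\to B\to 0$ with finitely generated projective terms, so $B\in\FP{n}$.

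For closure under direct summands I induct on $n$, the cases $n\le 1$ being elementary for finitely generated and finitely presented modules. If $M_1\oplus M_2\in\FP{n}$, choose epimorphisms $P_0\twoheadrightarrow M_1$ and $Q_0\twoheadrightarrow M_2$ with $P_0,Q_0$ finitely generated projective, having kernels $K_1,K_2$; applying the syzygy criterion to $P_0\oplus Q_0\twoheadrightarrow M_1\oplus M_2$ gives $K_1\oplus K_2\in\FP{n-1}$, hence $K_1\in\FP{n-1}$ by the inductive hypothesis, hence $M_1\in\FP{n}$ by the easy half of the criterion. Closure under cokernels of monomorphisms follows with the same tools: for $0\to A\to B\to C\to 0$ with $A,B\in\FP{n}$, fix an epimorphism $\pi\colon P_0\twoheadrightarrow B$ with $P_0$ finitely generated projective; the kernel of the composite $P_0\twoheadrightarrow C$ is $\pi^{-1}(A)$, which sits in $0\to\ker\pi\to\pi^{-1}(A)\to A\to 0$ with $\ker\pi\in\FP{n-1}$ by the criterion and $A\in\FP{n}\subseteq\FP{n-1}$, so $\pi^{-1}(A)\in\FP{n-1}$ by the extension closure already in hand, and the criterion gives $C\in\FP{n}$.

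For part (2), let $0\to K\to A\to B\to 0$ be exact with $A,B\in\FP{\infty}$, and fix an epimorphism $\pi\colon P_0\twoheadrightarrow A$ with $P_0$ finitely generated projective. The syzygy criterion applied at every $n$ gives $\ker\pi\in\FP{\infty}$, and likewise $\pi^{-1}(K)=\ker(P_0\twoheadrightarrow B)\in\FP{\infty}$; since $0\to\ker\pi\to\pi^{-1}(K)\to K\to 0$ is exact and each $\FP{n}$ is closed under cokernels of monomorphisms by part (1), $K$ lies in every $\FP{n}$, i.e.\ $K\in\FP{\infty}$ (note that finite generation of $K$ comes for free, as $K$ is a quotient of $\pi^{-1}(K)$). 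The one point demanding care is that the ``every presentation'' clause of the syzygy criterion at level $n$ relies, via Schanuel's lemma, on direct-summand closure at level $n-1$; so the criterion and the summand closure for $\FP{n}$ should be proved together in the inductive step, with the extension closure (which needs no induction) established beforehand so that it is available for the cokernel-of-monomorphisms case.
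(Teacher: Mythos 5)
The paper itself offers no proof of this proposition; it is cited verbatim from Bravo--P\'erez (\cite{bravo:15:fp}, Proposition 1.7 and Theorem 1.8), so there is no in-paper argument to compare against. Taken on its own terms, your proof is correct and well organized, and it reconstructs essentially the standard argument one would expect in the source: the key device is the syzygy reduction ($M\in\FP{n}$ iff the syzygy of any finite projective presentation lies in $\FP{n-1}$), with Schanuel's lemma furnishing independence of the chosen presentation, and direct-summand closure at level $n-1$ feeding into the ``every presentation'' clause at level $n$. You correctly observe that closure under extensions needs no induction (the horseshoe construction works verbatim for truncated resolutions, since only surjectivity at each stage is used), and you are right to flag the circularity risk and resolve it by running the syzygy criterion and direct-summand closure as a simultaneous induction, with extension closure pre-established so that the cokernel-of-monomorphisms case can invoke it at level $n-1$. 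The part (2) argument is also sound: fixing a single $P_0\twoheadrightarrow A$, the criterion at all levels gives $\ker\pi\in\FP{\infty}$ and $\pi^{-1}(K)\in\FP{\infty}$, and then part (1) applied at every $n$ to $0\to\ker\pi\to\pi^{-1}(K)\to K\to0$ lands $K$ in $\FP{\infty}$. One could tighten the base case bookkeeping ($\FP{0}$ and $\FP{1}$ are handled by hand, as you say), but no step is missing or incorrect.
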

Recall that a class $W$ is thick if it is closed
under direct summands, and whenever we are given a short exact sequence
$0 \rightarrow A \rightarrow B \rightarrow C \rightarrow 0$ with any two out of the three terms  in $W$, then so is the third. We note that $\FP{\infty}$ is always thick.

By Proposition \eqref{prop-FPn}, the class $\FP{n}$ is thick if, and only if, it is closed under kernels of epimorphisms. Indeed, we recall the following result.
\begin{proposition}[{\cite[Theorem 2.4]{bravo:15:fp}}]
 The class $\FP{n}$ is thick, if and only if, $\FP{k} = \FP{\infty}$, for any $k \ge n$.
\end{proposition}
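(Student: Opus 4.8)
The statement to prove is: the class $\FP{n}$ is thick if and only if $\FP{k} = \FP{\infty}$ for every $k \geq n$. One direction is immediate: if $\FP{k} = \FP{\infty}$ for all $k \geq n$, then in particular $\FP{n} = \FP{\infty}$, and $\FP{\infty}$ is always thick (as noted just above the statement), so $\FP{n}$ is thick. The plan is therefore to concentrate on the forward direction, and the key reduction (already flagged by the paper) is that, by Proposition \ref{prop-FPn}, $\FP{n}$ is thick precisely when it is closed under kernels of epimorphisms; so I may assume $\FP{n}$ has this closure property and must deduce $\FP{n} = \FP{n+1} = \cdots = \FP{\infty}$.

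First I would show $\FP{n} = \FP{n+1}$. The inclusion $\FP{n+1} \subseteq \FP{n}$ is part of the standing descending chain. For the reverse, take $M \in \FP{n}$ with a presentation $F_n \to F_{n-1} \to \cdots \to F_0 \to M \to 0$ by finitely generated projectives. Let $K = \ker(F_0 \to M)$; then there is an exact sequence $F_n \to \cdots \to F_1 \to K \to 0$, so $K \in \FP{n-1}$, and moreover $K$ is the kernel of the epimorphism $F_0 \to M$ between members of $\FP{n}$ (here $F_0 \in \FP{\infty} \subseteq \FP{n}$ since it is finitely generated projective). By the assumed closure under kernels of epimorphisms, $K \in \FP{n}$. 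But then $K$ has itself a length-$n$ presentation by finitely generated projectives, which spliced onto $0 \to K \to F_0 \to M \to 0$ yields a length-$(n+1)$ presentation of $M$; hence $M \in \FP{n+1}$. This gives $\FP{n} = \FP{n+1}$. Iterating (the class $\FP{n+1}$ is now equal to $\FP{n}$, hence also closed under kernels of epimorphisms, so the same argument upgrades $\FP{n+1} = \FP{n+2}$, and so on) we obtain $\FP{n} = \FP{k}$ for all $k \geq n$.

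It remains to identify this common class with $\FP{\infty}$. Since $\FP{\infty} = \bigcap_{k \geq 0} \FP{k} \subseteq \FP{n}$, I only need $\FP{n} \subseteq \FP{\infty}$, i.e. that every $M \in \FP{n}$ lies in $\FP{k}$ for all $k \geq 0$. For $k \geq n$ this is the equality just established; for $k < n$ it follows from the descending chain $\FP{n} \subseteq \FP{k}$. Hence $M \in \FP{k}$ for all $k \geq 0$, so $M \in \FP{\infty}$, completing the proof.

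The only delicate point — and the main obstacle — is the base step $\FP{n} = \FP{n+1}$, specifically the claim that $K = \ker(F_0 \to M)$ is the kernel of an epimorphism \emph{between modules already known to be in $\FP{n}$}: this uses that $F_0$, being finitely generated projective, lies in $\FP{\infty}$ and hence in $\FP{n}$, together with $M \in \FP{n}$ by hypothesis; then the thickness hypothesis (in its "closed under kernels of epimorphisms" guise) applies. Once that is in place, the induction is routine and the comparison with $\FP{\infty}$ is purely formal from the chain of inclusions.
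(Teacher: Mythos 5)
Your proof is correct. The paper itself does not supply an argument for this statement (it is cited from Bravo--P\'erez), so there is no internal proof to compare against; but your argument is exactly the natural one and is sound: reduce thickness to closure under kernels of epimorphisms via Proposition~\ref{prop-FPn} and the remark following it, establish the base step $\FP{n}=\FP{n+1}$ by taking $K=\ker(F_0\to M)$ (noting $F_0\in\FP{\infty}\subseteq\FP{n}$ so the closure hypothesis applies), splice the resulting length-$n$ resolution of $K$ onto $0\to K\to F_0\to M\to 0$, and then iterate. The final identification with $\FP{\infty}=\bigcap_{k\ge 0}\FP{k}$ via the descending chain is likewise routine. One cosmetic remark: the observation $K\in\FP{n-1}$ in your base step is never used (and would be vacuous for $n=0$); the step that carries the weight is the appeal to closure under kernels of epimorphisms, so you could simply drop that side remark.
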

A ring $R$ satisfying the equivalent conditions above is said to be $n$-coherent. We note that the 0-coherent rings are simply the noetherian rings, the 1-coherent rings are the usual coherent rings, any ring is $\infty$-coherent. Also, we have the following chain of inclusions:
\[
0\text{-coherent} \subseteq 1\text{-coherent} \subseteq \cdots \subseteq n\text{-coherent} \subseteq (n+1)\text{-coherent} \subseteq \cdots \subseteq \infty\text{-coherent}.
\]

The $\FP{n}$-injective and $\FP{n}$-flat modules are defined in terms of the finitely $n$-presented modules. We recall the definitions next.

\begin{definition}[{\cite[ Definition 3.1, Definition 3.2]{bravo:15:fp}}]  \label{def-FPn-inj}
Let $n \geq 0$ (including the case $n=\infty$).
\begin{enumerate}[leftmargin=*]
\item A left R-module $M$ is $\FP{n}$-injective if $\Ext_R^1(F, M) =0$ for all $F \in \FP{n}$. We denote by $\FPinj{n}$ the class of all $\FP{n}$-injective modules.
\item A left R-module $M$ is $\FP{n}$-flat if $\Tor^R_1(F, M) =0$ for all $F \in \FP{n}$. We denote by $\FPflat{n}$ the class of all $\FP{n}$-flat modules.
\end{enumerate}
\end{definition}

With this definition, $M$ is injective if, and only if, $M$ is $\FP{0}$-injective, and $M$ is
FP-injective (in the sense of Stenstr\"{o}m \cite{stenstrom:70:coherent}) if, and only if, $M$ is $\FP{1}$-injective. The case of $\FP{\infty}$-injective modules
(i.e. absolutely clean) is the same as introduced in \cite{gillespie:14:stable}; that is those $M \in \Rmod$
for which $\Ext_R^1 (F, M) = 0$ for all $F \in FP_{\infty}$. We also note that this definition of $\FP{n}$-injective modules differs from that of J. Chen and N. Ding (\cite{ding:96:coherent}) for $n > 1 $ (they consider orthogonality with respect to $\Ext_R^n(-,-)$ instead).

From Definition \eqref{def-FPn-inj} we get the following ascending chain:
\[
\FPinj{0} \subseteq \FPinj{1} \subseteq \cdots \subseteq \FPinj{n}  \subseteq \FPinj{n+1} \subseteq \cdots  \subset \FPinj{\infty}.
\]
The $\FP{n}$-flat modules have a similar ascending chain and the class of flat modules also coincides with $\FPflat{0}$. Furthermore, since any module is a direct limit of finitely presented modules and since the $\Tor(-,-)$ functor commutes with direct limits, we note that the classes $\FPflat{0}$ and $\FPflat{1}$ coincide.
%Hence the chain is as follows:
%\[
%\FPflat{0} = \FPflat{1} \subseteq \cdots \subseteq \FPflat{n} \subseteq \FPflat{n+1} \subseteq \cdots  \subset \FPflat{\infty}.
%\]
Note that the \emph{level modules}, in
the sense of \cite{gillespie:14:stable}, coincide with the $\FPflat{\infty}$ modules.

For completion, we summarize the following results about the classes of $\FP{n}$-injective modules and $\FP{n}$-flat modules presented in \cite{bravo:15:fp}. The first one has to do with the Pontryagin dual.
\begin{theorem*}[{\cite[Proposition 3.5 and Proposition 3.6]{bravo:15:fp}}]
Let $n>1$ (this includes the case $n = \infty$).
\begin{enumerate}[leftmargin=*]
\item $M$ is $\FP{n}$-injective if and only if $M^+$ is $\FP{n}$-flat.
\item $N$ is $\FP{n} $-flat if and only if $N^+$ is $\FP{n}$-injective.
\end{enumerate}
\end{theorem*}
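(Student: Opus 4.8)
The plan is to reduce both equivalences to the two basic features of the Pontryagin dual $(-)^{+}=\Hom_{\Z}(-,\Q/\Z)$: it is an exact functor, because $\Q/\Z$ is an injective $\Z$-module, and it reflects zero objects, i.e. $X=0$ if and only if $X^{+}=0$, because $\Q/\Z$ is a cogenerator for $\Z$-modules. The first feature lets us transport $\Ext$ and $\Tor$ across $(-)^{+}$; the second lets us convert a vanishing statement about $\Ext^{1}_{R}(F,-)$ or $\Tor^{R}_{1}(F,-)$ into the vanishing of its dual. So the whole argument comes down to exhibiting, for suitable test modules $F$, natural isomorphisms that interchange $\Ext^{1}$ and $\Tor_{1}$ upon applying $(-)^{+}$.

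For (2) I would start from the tensor--hom adjunction $\Hom_{R}(F,N^{+})\cong (F\otimes_{R}N)^{+}$, which is natural in $F$ and needs no hypothesis on $F$. Deriving it --- legitimate because $(-)^{+}$ is exact, so it carries a projective resolution $P_{\bullet}\to F$ to a complex whose cohomology computes the dualized $\Tor$'s, and it commutes with (co)homology --- gives $\Ext^{i}_{R}(F,N^{+})\cong \Tor^{R}_{i}(F,N)^{+}$ for every $i\ge 0$. Taking $i=1$ and letting $F$ range over $\FP{n}$, the reflect-zero property yields that $\Ext^{1}_{R}(F,N^{+})=0$ for all $F\in\FP{n}$ exactly when $\Tor^{R}_{1}(F,N)=0$ for all $F\in\FP{n}$; that is, $N^{+}$ is $\FP{n}$-injective if and only if $N$ is $\FP{n}$-flat.

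For (1) the point --- and the place where $n>1$ enters --- is the companion isomorphism $\Tor^{R}_{1}(M^{+},F)\cong \Ext^{1}_{R}(F,M)^{+}$. Since $n>1$ we have $F\in\FP{n}\subseteq\FP{2}$, so $F$ admits a projective resolution $\cdots\to P_{2}\to P_{1}\to P_{0}\to F\to 0$ whose first three terms are finitely generated free. For a finitely generated free (more generally, finitely presented) module $P$ there is a natural isomorphism $M^{+}\otimes_{R}P\cong \Hom_{R}(P,M)^{+}$: both functors are right exact and the comparison map $\phi\otimes p\mapsto\bigl(f\mapsto \phi(f(p))\bigr)$ is an isomorphism on finitely generated frees, hence on all finitely presented modules by the five lemma. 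Assembling these degreewise produces a morphism of chain complexes $M^{+}\otimes_{R}P_{\bullet}\to \Hom_{R}(P_{\bullet},M)^{+}$ that is an isomorphism in degrees $0,1,2$, hence an isomorphism on $H_{0}$ and $H_{1}$; since $(-)^{+}$ is exact, $H_{i}\bigl(\Hom_{R}(P_{\bullet},M)^{+}\bigr)\cong\Ext^{i}_{R}(F,M)^{+}$, so $\Tor^{R}_{1}(M^{+},F)\cong\Ext^{1}_{R}(F,M)^{+}$. The reflect-zero property then finishes it: $\Ext^{1}_{R}(F,M)=0$ for all $F\in\FP{n}$ if and only if $\Tor^{R}_{1}(M^{+},F)=0$ for all $F\in\FP{n}$, i.e. $M$ is $\FP{n}$-injective if and only if $M^{+}$ is $\FP{n}$-flat.

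The only real obstacle is this finiteness bookkeeping in (1): one needs a \emph{finitely generated} free resolution of $F$ out to homological degree $2$ so that the comparison map is an isomorphism in the range that controls $H_{1}$ --- equivalently, one needs the first syzygy of $F$ to be finitely presented, which forces $F\in\FP{2}$ and hence the hypothesis $n>1$. (For $n=1$ the comparison map is only an epimorphism in the relevant degree and a separate argument is needed, recovering the classical fact that $M$ is FP-injective if and only if $M^{+}$ is flat; for $n=0$ the statement fails outright.) Beyond that, the only thing to watch is the side convention: if $M$ is a left module then $M^{+}$ is a right module, and the classes $\FPinj{n}$ and $\FPflat{n}$ against which it is tested are the corresponding classes of right modules; similarly in (2), with $F$ ranging over the finitely $n$-presented modules on the appropriate side. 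This is routine.
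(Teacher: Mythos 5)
The paper does not prove this theorem; it is quoted as a known result and attributed to \cite{bravo:15:fp} (Propositions 3.5 and 3.6), so there is no in-paper argument to compare against. Your reconstruction is correct and is exactly the standard dualization argument one would expect to find in the cited source: for (2), derive the hom--tensor adjunction $\Hom_R(F,N^+)\cong(F\otimes_R N)^+$ to get $\Ext^i_R(F,N^+)\cong\Tor^R_i(F,N)^+$ for all $i$ and all $F$ with no finiteness hypothesis; for (1), use the comparison map $M^+\otimes_R P\to\Hom_R(P,M)^+$, which is an isomorphism for finitely presented $P$, applied degreewise to a resolution of $F$ whose first three terms are finitely generated --- which is where $F\in\FP{2}$, and hence $n>1$, is used. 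Your account of why $n>1$ enters only in part (1), your handling of the left/right side conventions, and your invocation of exactness and zero-reflection of $(-)^+$ are all accurate. One small quibble in the parenthetical aside about $n=1$: in a free resolution of a merely finitely presented $F$, the term $P_2$ need not be finitely generated, and for a non--finitely generated free module the degreewise comparison map $(M^+)^{(I)}\to (M^I)^+$ is a monomorphism rather than an epimorphism; what is an epimorphism is the induced map $\Tor^R_1(M^+,F)\to\Ext^1_R(F,M)^+$ on homology, which is presumably what you meant. This does not affect the proof of the theorem as stated, since $n=1$ is excluded.
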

The next one captures some closure properties of these classes.
\begin{theorem*}[{\cite[Proposition 3.9, Proposition 3.10 and Proposition 3.11]{bravo:15:fp}}]
Let $n>1$ (this includes the case $n = \infty$). Then the classes of $\FP{n}$-injective modules and $\FP{n}$-flat modules are closed under pure submodules, pure quotients, extensions, direct limits, products and direct summands.
\end{theorem*}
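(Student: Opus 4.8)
The plan is to read each of the six closure properties off from the cohomological descriptions $M\in\FPinj{n}\iff\Ext^1_R(F,M)=0$ for all $F\in\FP{n}$, and $N\in\FPflat{n}\iff\Tor^R_1(F,N)=0$ for all $F\in\FP{n}$, combined with the duality theorem recalled just above (itself valid only for $n>1$) and routine homological bookkeeping.

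The formal properties come for free. Both $\Ext^1_R(F,-)$ and $\Tor^R_1(F,-)$ are additive, so a direct summand of a module killed by one of these functors is again killed by it; hence both classes are closed under direct summands. Closure under extensions follows from the long exact sequences: for $0\to X'\to X\to X''\to0$ with $X'$ and $X''$ in the class, the three-term piece $\Ext^1_R(F,X')\to\Ext^1_R(F,X)\to\Ext^1_R(F,X'')$ (respectively its $\Tor$-analogue) forces the middle term to vanish. Finally, $\Ext^1_R(F,-)$ always commutes with arbitrary products, so $\FPinj{n}$ is closed under products, and $\Tor^R_1(F,-)$ always commutes with direct limits, so $\FPflat{n}$ is closed under direct limits.

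The two remaining limit assertions — that $\FPinj{n}$ is closed under direct limits and $\FPflat{n}$ under products — are where the hypothesis $n\ge2$ is genuinely used, and this is the crux. If $F\in\FP{n}$ with $n\ge2$ (in particular if $F\in\FP{\infty}$), the definition supplies a truncation $F_2\to F_1\to F_0\to F\to0$ of a projective resolution with $F_0,F_1,F_2$ finitely generated projective; extending it to a full projective resolution one sees that $\Ext^1_R(F,-)$ is, as a functor of its second argument, the first cohomology of the three-term complex $\Hom_R(F_0,-)\to\Hom_R(F_1,-)\to\Hom_R(F_2,-)$, and likewise $\Tor^R_1(F,-)$ the first homology of $F_2\otimes_R-\to F_1\otimes_R-\to F_0\otimes_R-$. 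For finitely generated projective $F_i$ the functor $\Hom_R(F_i,-)$ commutes with direct limits and $F_i\otimes_R-$ commutes with direct products; since direct limits and direct products are exact — hence commute with the kernels and images in question — one obtains natural isomorphisms $\Ext^1_R(F,\varinjlim M_j)\cong\varinjlim\Ext^1_R(F,M_j)$ and $\Tor^R_1(F,\prod N_j)\cong\prod\Tor^R_1(F,N_j)$ for every $F\in\FP{n}$, which gives the two closure statements. For $n=1$ this breaks down, since $\Ext^1_R(F,-)$ with $F$ merely finitely presented need not commute with direct limits; so this step, and not any of the formal ones, is the real obstacle and the reason for the hypothesis.

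It remains to treat pure submodules and pure quotients, which is done through the Pontryagin dual. If $0\to A\to B\to C\to0$ is pure exact, then the dual sequence $0\to C^+\to B^+\to A^+\to0$ is split exact, so $A^+$ and $C^+$ are both direct summands of $B^+$. If $B\in\FPinj{n}$, then $B^+\in\FPflat{n}$ by the duality theorem, hence $A^+$ and $C^+$ lie in $\FPflat{n}$ by the direct-summand closure already proved, and applying the duality theorem once more gives $A,C\in\FPinj{n}$. Interchanging the roles of the injective and flat classes yields the corresponding statement for $\FPflat{n}$, completing the proof of all six closure properties.
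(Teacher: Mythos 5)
Your proof is correct, and since the paper itself only recalls this result from \cite{bravo:15:fp} (Propositions 3.9, 3.10 and 3.11) without reproducing the argument, there is no in-text proof to compare against; your reconstruction matches the standard argument from that reference. The key observation — that for $F\in\FP{n}$ with $n\ge 2$ one has a truncated finitely generated projective resolution $F_2\to F_1\to F_0\to F\to 0$, so that $\Ext^1_R(F,-)$ and $\Tor^R_1(F,-)$ are computed from three-term complexes of functors that commute with direct limits and with products respectively — is exactly the mechanism that makes the two nonformal closure properties work and explains the hypothesis $n>1$, and the reduction of pure-sub/quotient closure to direct-summand closure via the split dual sequence is the standard device.
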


\section{Applications of duality pairs to $\FP{n}$-injective/flat covers and preenvelopes and Ding-injective covers and envelopes} \label{Applications}

 We will use duality pairs to prove the covering and preenveloping results mentioned in the introduction.
%As usual, the character module or the  Pontryagin dual module
%of a left R-module M, is defined as the right R-module $M+ := Hom_Z(M, Q/Z)$.\\
%They were introduced by Holm and J{\o}rgensen. We recall that a pair of classes of modules $(\mathcal{M}, \mathcal{C})$ is a duality pair if :\\
%1. $M \in \mathcal{M}$ if and only if $M^+ \in \mathcal{C}$.\\
%2. $\mathcal{C}$ is closed under direct summands and under finite direct sums.\\
By  Holm and J{\o}rgensen, \cite[Definition 2.1]{holm:09:duality}, a duality pair $(\mathcal{M}, \mathcal{C})$ is \emph{coproduct closed} if $\mathcal{M}$ is closed under arbitrary direct sums. The duality pair $(\mathcal{M}, \mathcal{C})$ is \emph{product closed} if $\mathcal{M}$ is closed under direct products. A duality pair $(\mathcal{M}, \mathcal{C})$ is \emph{perfect} if $\mathcal{M}$ is closed under arbitrary direct sums, under extensions, and if $R \in \mathcal{M}$.
We also recall the following result.
\begin{theorem*}[{\cite[Theorem 3.1]{holm:09:duality}}]
Let $(\mathcal{M}, \mathcal{C})$ be a duality pair. Then $\mathcal{M}$ is closed under pure submodules, pure quotients and pure extensions. Also
\begin{enumerate}[leftmargin=*]
\item If $(\mathcal{M}, \mathcal{C})$ is product closed, then $\mathcal{M}$ is preenveloping.
\item If $(\mathcal{M}, \mathcal{C})$ is coproduct closed, then $\mathcal{M}$ is covering.
\item If $(\mathcal{M}, \mathcal{C})$ is perfect, then $(\mathcal{M}, \mathcal{M}^\bot)$ is a perfect cotorsion pair.
\end{enumerate}
\end{theorem*}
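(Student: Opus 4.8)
The plan is to reduce all five assertions to a single classical fact about purity: a short exact sequence $0\to A\to B\to C\to 0$ of $R$-modules is pure exact if and only if its Pontryagin dual $0\to C^+\to B^+\to A^+\to 0$ is split exact. Granting this, the closure statements follow at once. If $0\to A\to B\to C\to 0$ is pure exact with $B\in\mathcal M$, then $B^+\in\mathcal C$; since the dual sequence splits, $A^+$ and $C^+$ are direct summands of $B^+$, hence lie in $\mathcal C$ by the second duality-pair axiom, and therefore $A,C\in\mathcal M$ -- this is closure under pure submodules and pure quotients. If instead $A,C\in\mathcal M$, then $A^+,C^+\in\mathcal C$, so $B^+\cong A^+\oplus C^+\in\mathcal C$ again by the second axiom, whence $B\in\mathcal M$ -- closure under pure extensions.

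For the preenveloping statement I would run a solution-set argument combining the pure-submodule closure with the product closure. The needed input is the standard fact that for $\lambda:=\max(|R|,\aleph_0)$ every subset $X$ of a module $M$ with $|X|\le\lambda$ lies in a pure submodule $M'\subseteq M$ with $|M'|\le\lambda$. Fix a module $N$ and put $\lambda:=\max(|R|,|N|,\aleph_0)$; let $\mathcal S$ be a set of isomorphism representatives of the modules in $\mathcal M$ of cardinality $\le\lambda$, and set $P:=\prod_{M'\in\mathcal S}(M')^{\Hom_R(N,M')}$, which lies in $\mathcal M$ since $(\mathcal M,\mathcal C)$ is product closed. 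Let $\phi\colon N\to P$ be the canonical map assembling all homomorphisms $N\to M'$ with $M'\in\mathcal S$. Given any $f\colon N\to M$ with $M\in\mathcal M$, the submodule $f(N)$ is generated by $\le\lambda$ elements, hence is contained in a pure submodule $M'\subseteq M$ with $|M'|\le\lambda$; then $M'\in\mathcal M$ by the pure-submodule closure, $f$ factors as $N\to M'\hookrightarrow M$, and the first map is a component of $\phi$. Thus $\phi$ is an $\mathcal M$-preenvelope.

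For the covering statement I would first promote coproduct closure to closure under directed colimits: for a directed system $(M_i)$ in $\mathcal M$ the canonical epimorphism $\bigoplus_i M_i\twoheadrightarrow\varinjlim_i M_i$ is pure, because in the associated long exact $\Tor$-sequence the connecting map vanishes (its source $\bigoplus_i\Tor^R_1(M_i,-)$ surjects onto $\varinjlim_i\Tor^R_1(M_i,-)=\Tor^R_1(\varinjlim_i M_i,-)$, since $\Tor$ commutes with directed colimits); hence $\varinjlim_i M_i$ is a pure quotient of a coproduct of modules in $\mathcal M$ and so lies in $\mathcal M$. The main obstacle is then to show $\mathcal M$ is precovering: the naive dual of the preenveloping argument fails, because a precover $P\to N$ must absorb an arbitrarily large $M\in\mathcal M$ into a single small object, and the coproduct over a solution set is not large enough -- the obvious partial factorizations of a map out of $M$ do not fit together compatibly. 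The remedy is a transfinite closing-off construction in the spirit of the flat cover theorem of Bican--El Bashir--Enochs: writing each $M\in\mathcal M$ as the directed union of its pure submodules of cardinality $\le\lambda$ (all in $\mathcal M$, with the inclusions pure monomorphisms), one builds a continuous chain $P_0\subseteq P_1\subseteq\cdots$ inside $\mathcal M$ by iterated pushouts along such pure monomorphisms, and a cardinality count shows the construction stabilizes at a bounded ordinal, producing $P\in\mathcal M$ with a map to $N$ through which every $f\colon M\to N$ factors. Since a precovering class closed under directed colimits is covering (Enochs), $\mathcal M$ is covering.

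For the perfect case, $\mathcal M$ is in addition closed under extensions and contains $R$, so, together with the directed-colimit closure established above, it is closed under transfinite extensions. Using $R\in\mathcal M$ and the pure-submodule closure, every $M\in\mathcal M$ carries a continuous filtration by pure submodules whose consecutive quotients lie in $\mathcal M$ and have cardinality $\le\lambda$; thus, up to direct summands, $\mathcal M$ is exactly the class of transfinite extensions of the set $\mathcal S$ of modules in $\mathcal M$ of size $\le\lambda$. Consequently $\mathcal M^\perp=\mathcal S^\perp$ by Eklof's lemma, while ${}^\perp(\mathcal S^\perp)\subseteq\mathcal M$ because $\mathcal M$ is closed under transfinite extensions and summands, so $(\mathcal M,\mathcal M^\perp)$ is the cotorsion pair cogenerated by the set $\mathcal S$ and is therefore complete by the Eklof--Trlifaj theorem. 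Finally, since $\mathcal M$ is also closed under directed colimits, completeness upgrades to perfectness -- every module admits an $\mathcal M$-cover and an $\mathcal M^\perp$-envelope -- which is the claim of part (3).
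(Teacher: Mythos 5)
This statement is a quoted theorem of Holm and J{\o}rgensen (\cite[Theorem 3.1]{holm:09:duality}); the paper does not prove it, it simply recalls it, so there is no internal proof to compare your attempt against. Evaluating it on its own merits: the reduction of the three closure statements to the classical fact that a short exact sequence is pure exact if and only if its Pontryagin dual splits is exactly right, and is the route Holm--J{\o}rgensen take. The preenveloping argument is a correct, self-contained solution-set construction; Holm--J{\o}rgensen instead cite their earlier purity paper (essentially Proposition 3.1(1) of the present article), but your direct argument is equivalent. Your proof that $\mathcal M$ is closed under directed colimits when coproduct closed --- showing $\bigoplus_i M_i \twoheadrightarrow \varinjlim_i M_i$ is pure by checking that $\bigoplus_i\Tor_1^R(M_i,N)\to\varinjlim_i\Tor_1^R(M_i,N)$ is surjective --- is correct. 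The perfect case via deconstructibility (small pure submodules give $\mathcal S$-filtrations, Eklof's lemma gives $\mathcal M^\perp=\mathcal S^\perp$, the Eklof--Trlifaj theorem gives completeness, and directed-colimit closure upgrades to perfectness) is sound, though note that closure of $\mathcal M$ under direct summands, which you invoke, should be derived from closure under pure submodules rather than taken for granted.

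The one genuine gap is the precovering step in the covering case. You correctly diagnose that the naive dual of the preenveloping argument fails because partial factorizations over the directed system of small pure submodules of $M$ need not be compatible, but the proposed remedy is only sketched: ``iterated pushouts along pure monomorphisms'' followed by ``a cardinality count shows the construction stabilizes at a bounded ordinal'' is not an argument. In fact the transfinite chain in such constructions does not stabilize; the actual content of the flat cover theorem machinery is a separate (and nontrivial) cardinality estimate showing that some stage $P_\alpha\to N$ is already a precover, and you do not supply it. The clean way to close the gap --- and what Holm--J{\o}rgensen themselves do --- is to cite the known result that a class closed under pure quotients is covering if and only if it is closed under set-indexed direct sums (\cite[Theorem 2.5]{holm:08:purity}, built on El Bashir's theorem that a class closed under directed colimits and generated under directed colimits by a set is covering). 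Your observation that every $M\in\mathcal M$ is the directed union of its $\lambda$-small pure submodules, all lying in $\mathcal M$, is precisely the hypothesis El Bashir's theorem needs, so an explicit citation would complete the proof; as written, the precovering step is missing.
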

A pair of $R$-modules $(\mathcal{A},\mathcal{B})$ is said to be a \emph{cotorsion pair} if
\[
\mathcal{A} = {}^{\bot}\mathcal{B} = \{ X \in \Rmod : \Ext_R^1(X,B) \text{ for all } B \in \mathcal{B} \}
\]
and
\[
\mathcal{B} = \mathcal{A}^{\bot} = \{ X \in \Rmod : \Ext_R^1(A,X) \text{ for all } A \in \mathcal{A} \}.
\]
A cotorsion pair $(\mathcal{A},\mathcal{B})$ is said to be \emph{perfect} if $\mathcal{A}$ is covering and $\mathcal{B}$ is enveloping. Standard references for cotorsion pairs are \cite{E-J-Vol-1} and \cite{Gobel-Trlifaj}.

We will also use the following results from \cite{holm:08:purity}.
\begin{proposition}{\cite[Remark 2.6 and Theorem 4.3]{holm:08:purity}} \label{Results-from-HJ}
\begin{enumerate}[leftmargin=*]
\item Let $\mathcal{G}$ be a class of left $R$-modules that is closed under pure submodules. Then $\mathcal{G}$
is preenveloping if and only if it is closed under set indexed direct products.
\item Let $\mathcal{B}$ be a class of right $R$-modules. The class $\kker{\Tor^R_1(\mathcal{B}, -)}$ is the first half of a perfect cotorsion pair. \label{Ker-first-half}
\end{enumerate}
\end{proposition}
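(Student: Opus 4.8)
The statement collects two facts of Holm and J{\o}rgensen, and I would prove them separately.

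\emph{Part (1).} For the ``only if'' direction, assume $\mathcal{G}$ is preenveloping and let $\{G_i\}_{i\in I}$ be a family in $\mathcal{G}$. Choose a $\mathcal{G}$-preenvelope $\varphi\colon\prod_i G_i\to G$; each projection $\pi_i\colon\prod_i G_i\to G_i$ factors through $\varphi$, and assembling these factorizations produces $\psi\colon G\to\prod_i G_i$ with $\psi\varphi=\mathrm{id}$. Thus $\prod_i G_i$ is a direct summand of $G\in\mathcal{G}$, and since a direct summand is a pure submodule, closure of $\mathcal{G}$ under pure submodules gives $\prod_i G_i\in\mathcal{G}$. For the ``if'' direction I would run the standard purity argument: fix a module $M$ and an infinite cardinal $\kappa\ge|M|+|R|$; using the fact that in any module every subset of cardinality $\le\kappa$ is contained in a pure submodule of cardinality $\le\kappa$, every $f\colon M\to G$ with $G\in\mathcal{G}$ factors through a pure submodule $G'\le G$ with $|G'|\le\kappa$, and $G'\in\mathcal{G}$ by hypothesis. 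Picking a set $\{G_j\}_{j\in J}$ of representatives of the $\le\kappa$-generated members of $\mathcal{G}$, closure under products puts $P:=\prod_{j\in J}G_j^{\Hom_R(M,G_j)}$ in $\mathcal{G}$, and the canonical map $M\to P$ is then a $\mathcal{G}$-preenvelope.

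\emph{Part (2).} Write $\mathcal{A}:=\kker{\Tor^R_1(\mathcal{B},-)}$. The first step is the natural isomorphism $\Tor^R_1(B,X)^+\cong\Ext^1_R(X,B^+)$, which, since $(-)^+$ is faithful, identifies $\mathcal{A}={}^{\bot}\{B^+:B\in\mathcal{B}\}$; moreover each $B^+$ is a pure-injective left $R$-module. Next I would record that $\mathcal{A}$ is closed under direct limits (because $\Tor$ commutes with them), hence under direct sums, and that being a class of the form ${}^{\bot}(-)$ it is closed under extensions and summands and contains every projective module, so in particular $R\in\mathcal{A}$. The decisive step is to prove that the cotorsion pair $(\mathcal{A},\mathcal{A}^{\bot})$ is complete; by the Eklof--Trlifaj theorem it suffices to show $\mathcal{A}={}^{\bot}\mathcal{S}$ for a \emph{set} $\mathcal{S}$, i.e. that $\mathcal{A}$ is deconstructible. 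I would derive this from the closure of $\mathcal{A}$ under pure submodules, pure quotients and extensions via a Kaplansky-class type transfinite induction (the same technique as in the proof of the flat cover conjecture): every $X\in\mathcal{A}$ is then filtered by $\le\kappa$-generated submodules that again lie in $\mathcal{A}$, for a fixed $\kappa$ depending only on $|R|$, so one may take $\mathcal{S}$ to be the set of isomorphism classes of such small members of $\mathcal{A}$. Finally, a complete cotorsion pair whose left half is closed under direct limits is perfect, so $\mathcal{A}$ is covering and $\mathcal{A}^{\bot}$ is enveloping, as claimed.

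The genuine obstacle, to my mind, is the set-theoretic reduction in Part (2): replacing the proper class $\{B^+:B\in\mathcal{B}\}$ — equivalently, the Tor-orthogonality condition — by a single cogenerating set is where the structure theory of purity and the associated cardinality estimates must really be used, whereas everything else is formal cotorsion-pair bookkeeping together with the standard ``complete and direct-limit-closed implies perfect'' principle.
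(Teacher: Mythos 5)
The paper gives no proof here---both parts are direct citations of Holm and J{\o}rgensen \cite{holm:08:purity}---so there is no in-paper argument to compare yours against; one can only judge the reconstruction on its own terms, and it is essentially correct. Part (1) is the standard argument (retract a preenvelope of a product for one direction; a cardinality/purity bound plus a product over small representatives for the other). For Part (2), the ingredients you list are right: the $\Tor$--$\Ext$ duality $\Tor^R_1(B,X)^+\cong\Ext^1_R(X,B^+)$ and faithfulness of $(-)^+$ identify $\mathcal{A}=\kker{\Tor^R_1(\mathcal{B},-)}$ with ${}^{\bot}\{B^{+}\colon B\in\mathcal{B}\}$, and tensoring a projective presentation $0\to K\to P\to B\to 0$ of each $B$ against a pure short exact sequence and chasing the resulting square shows $\mathcal{A}$ is closed under pure submodules and pure quotients, in addition to direct limits, extensions, direct summands, and $R\in\mathcal{A}$. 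One small slip in phrasing: deconstructibility of $\mathcal{A}$ gives that the cotorsion pair is \emph{generated} by a set, i.e.\ $\mathcal{A}^{\bot}=\mathcal{S}^{\bot}$ for a set $\mathcal{S}$ of small members of $\mathcal{A}$, which is what the Eklof--Trlifaj theorem actually requires; the formulation ``$\mathcal{A}={}^{\bot}\mathcal{S}$'' is not the right one, and to conclude $\mathcal{A}={}^{\bot}(\mathcal{A}^{\bot})$ one additionally invokes closure of $\mathcal{A}$ under transfinite extensions and under summands. Your overall route (deconstructibility $\Rightarrow$ completeness by Eklof--Trlifaj $\Rightarrow$ perfection by direct-limit closure) is a valid alternative to the one in \cite{holm:08:purity}, which proceeds via their own covering theorem (a class closed under pure submodules, pure quotients and direct sums with $R$ in it is covering) and then upgrades to a perfect cotorsion pair using closure under extensions; the underlying purity and cardinality estimates are the same in both arguments.
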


An immediate application is the following result.

\begin{proposition}[{\cite[Proposition 4.5]{bravo:15:fp}} and {\cite[Proposition 3.12]{GP}}]
For any ring $R$ and every $n \ge 0$, the pair $(\FPflat{n}, \FPflat{n}^{\bot})$ is a perfect cotorsion pair.
\end{proposition}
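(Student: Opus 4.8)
The plan is to recognize $\FPflat{n}$ as a kernel class of the form $\kker{\Tor^R_1(\mathcal{B},-)}$ for a suitable \emph{set} $\mathcal{B}$ of right $R$-modules, and then to invoke Proposition \ref{Results-from-HJ}(2) essentially verbatim.

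First I would fix a set $\mathcal{F}$ of representatives, up to isomorphism, of the finitely $n$-presented right $R$-modules. Such a set exists: a finitely $n$-presented module is obtained from an $n$-step resolution by finitely generated free modules, and any such resolution is encoded by finitely many matrices over $R$, so there is only a set of isomorphism classes (for $n=\infty$ one intersects these sets over all finite $n$, the result still being a set up to isomorphism). Since the functor $\Tor^R_1(F,-)$ depends only on the isomorphism class of $F$, Definition \eqref{def-FPn-inj} of $\FP{n}$-flat module gives
\[
\FPflat{n} = \kker{\Tor^R_1(\mathcal{F},-)}.
\]

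Now I would apply Proposition \ref{Results-from-HJ}(2) with $\mathcal{B} = \mathcal{F}$, which yields that $\FPflat{n} = \kker{\Tor^R_1(\mathcal{F},-)}$ is the first half of a perfect cotorsion pair $(\FPflat{n}, \mathcal{C})$. Since in any cotorsion pair $(\mathcal{A},\mathcal{C})$ one has $\mathcal{C} = \mathcal{A}^{\bot}$ by definition, it follows that $\mathcal{C} = \FPflat{n}^{\bot}$, so the pair is precisely $(\FPflat{n}, \FPflat{n}^{\bot})$; and being perfect, $\FPflat{n}$ is covering while $\FPflat{n}^{\bot}$ is enveloping, which is the assertion. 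The only step that is not a direct citation is the passage from the class $\FP{n}$ to the set $\mathcal{F}$ — this is the "main obstacle" only in a formal sense, being entirely routine — so the statement is an immediate consequence of \cite[Theorem 4.3]{holm:08:purity}.
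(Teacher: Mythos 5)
Your proof is correct and follows essentially the same route as the paper: identify $\FPflat{n}$ as $\kker{\Tor^R_1(\FP{n},-)}$ and apply Proposition \ref{Results-from-HJ}(2). The extra fuss about replacing the class $\FP{n}$ by a set of isomorphism-class representatives is harmless but unnecessary, since the cited result is stated for an arbitrary class $\mathcal{B}$.
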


\begin{proof}
This follows from $\FPflat{n} = \kker{\Tor^R_1 (\FP{n},-)}$ and part \eqref{Ker-first-half} of Proposition \ref{Results-from-HJ}.
\end{proof}

In the following two results, we prove that, for any $n > 1$, the classes of $\FP{n}$-injective and $\FP{n}$-flat modules are both preenveloping and covering, over any ring $R$.
\begin{proposition}\label{prop.cov.env} ({\cite[Theorem 3.4]{GP}})
Let $n > 1$ be an integer. The class of $\FP{n}$-flat modules is both covering and preenveloping over any ring $R$.
\end{proposition}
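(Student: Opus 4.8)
The plan is to package $\FPflat{n}$ as the left-hand class of a duality pair and then read off both conclusions from \cite[Theorem 3.1]{holm:09:duality}. Concretely, I would show that $(\FPflat{n}, \FPinj{n})$ is a duality pair, with $\FPflat{n}$ regarded as a class of left $R$-modules and $\FPinj{n}$ as a class of right $R$-modules. Axiom (1) of a duality pair is exactly the Pontryagin-dual statement recalled above from \cite{bravo:15:fp}: for $n > 1$, a left module $N$ is $\FP{n}$-flat if and only if $N^+$ is $\FP{n}$-injective. Axiom (2) asks that $\FPinj{n}$ be closed under direct summands and under finite direct sums; this is part of the closure-properties theorem quoted from \cite{bravo:15:fp} (that class is closed under direct summands and under extensions, hence under finite direct sums). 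So $(\FPflat{n}, \FPinj{n})$ is a duality pair.

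Next I would verify the two ``closure of the left class'' conditions. Since $\Tor^R_1(F,-)$ commutes with arbitrary direct sums, $\FPflat{n}$ is closed under coproducts; thus the duality pair is coproduct closed and \cite[Theorem 3.1(2)]{holm:09:duality} gives that $\FPflat{n}$ is covering. For the preenveloping half, I would invoke closure of $\FPflat{n}$ under direct products --- again part of the closure-properties theorem of \cite{bravo:15:fp}, valid precisely because $n > 1$ --- so that the duality pair is product closed and \cite[Theorem 3.1(1)]{holm:09:duality} yields that $\FPflat{n}$ is preenveloping. Equivalently, one could note that the left class of any duality pair is closed under pure submodules and then apply Proposition \ref{Results-from-HJ}(1) together with closure under products.

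The formal argument is short; the weight rests on the two imported facts. The Pontryagin-dual characterization is routine. The genuinely delicate input is closure of $\FPflat{n}$ under direct products, which depends on $\Tor^R_1(F,-)$ commuting with products for every $F \in \FP{n}$; this holds once $n \geq 2$, because then $F$ has a partial resolution by finitely generated projectives long enough to compute $\Tor_1$ after passing to a direct product. This is why the hypothesis $n > 1$ cannot be dropped: for $n \leq 1$ the class $\FPflat{n}$ is the class of flat modules, which need not be closed under products nor preenveloping. So the ``main obstacle'' is really only to keep the left/right bookkeeping of the duality pair consistent and to have the product-closure input on hand --- everything else is machinery already recorded in the excerpt.
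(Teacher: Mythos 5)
Your argument is correct and follows essentially the same duality-pair strategy as the paper. The one place you diverge is in the covering claim: the paper deduces it from the fact that $\FPflat{n}$ is the left half of a perfect cotorsion pair (which was established in the proposition immediately preceding, via $\FPflat{n}=\kker{\Tor^R_1(\FP{n},-)}$ and \cite[Theorem 4.3]{holm:08:purity}), whereas you obtain it by observing that $(\FPflat{n},\FPinj{n})$ is a coproduct-closed duality pair and invoking \cite[Theorem 3.1(2)]{holm:09:duality}. Both are legitimate applications of the same Holm--J{\o}rgensen toolkit; yours is slightly more self-contained within this proposition, while the paper's route leans on the already-recorded perfect cotorsion pair (which carries the extra information that $\FPflat{n}^{\bot}$ is enveloping). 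Your side remark explaining \emph{why} $n>1$ is needed for product-closure --- a length-$2$ resolution by finitely generated projectives suffices to compute $\Tor_1$ after taking products --- is accurate and a nice addition, though the paper simply cites \cite[Proposition 3.11]{bravo:15:fp} for that closure. Everything checks out.
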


\begin{proof}
Since $\FPflat{n}$ is the left half of a perfect cotorsion pair, it is a covering class.\\
By \cite[Proposition 3.5]{bravo:15:fp}, a module $M$ is $\FP{n}$-flat if and only if $M^+$ is $\FP{n}$-injective. Also, by \cite[Proposition 3.9 and Proposition 3.11]{bravo:15:fp},  the class of $\FP{n}$-flat modules is closed under direct sums and under summands. So the pair $(\FPflat{n}, \FPinj{n})$ is a duality pair.
%Since $\FP{n}$-flat is in fact closed under arbitrary direct sums, it follows from \cite[Theorem 3.1]{holm:09:duality},  that the class $\FP{n}$-flat is covering.
Since, by \cite[Proposition 3.11]{bravo:15:fp} again, $\FP{n}$-flat is closed under direct products we obtain that $\FP{n}$-flat is also preenveloping.
\end{proof}

\begin{proposition}\label{prop.cov.env2}({\cite[Theorem 3.7]{GP}})
Let $n > 1$ be an integer. The class of $\FP{n}$-injective modules is both covering and preenveloping  over any ring $R$.
\end{proposition}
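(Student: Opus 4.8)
The plan is to mimic the argument of Proposition \ref{prop.cov.env}, exploiting the symmetry of the duality between $\FP{n}$-injective and $\FP{n}$-flat modules. First I would establish that $(\FPinj{n}, \FPflat{n})$ is a duality pair: condition (1) of the definition is exactly \cite[Proposition 3.5]{bravo:15:fp}, which says that $M$ is $\FP{n}$-injective if and only if $M^+$ is $\FP{n}$-flat; condition (2) requires that $\FPflat{n}$ be closed under direct summands and finite direct sums, which follows from \cite[Proposition 3.11]{bravo:15:fp} (closure under direct summands) together with closure under finite direct sums (a special case of closure under direct products, or of closure under direct sums, both in \cite[Proposition 3.11]{bravo:15:fp}). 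Hence $(\FPinj{n}, \FPflat{n})$ is a duality pair.

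Next, to get the preenveloping conclusion I would invoke part (1) of the Holm--J{\o}rgensen theorem \cite[Theorem 3.1]{holm:09:duality}: a product-closed duality pair has $\mathcal{M}$ preenveloping. So it suffices to observe that $\FPinj{n}$ is closed under direct products, which is again \cite[Proposition 3.11]{bravo:15:fp}. For the covering conclusion I would use part (2) of the same theorem: a coproduct-closed duality pair has $\mathcal{M}$ covering. So it suffices that $\FPinj{n}$ be closed under arbitrary direct sums, and this is once more contained in \cite[Proposition 3.11]{bravo:15:fp} (closure under direct limits, hence in particular under coproducts, or stated directly there). Both conclusions then follow immediately.

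Alternatively, one could prove the covering statement via the cotorsion-pair route, but there is an asymmetry: $\FPinj{n}$ is a \emph{right} orthogonal class $(\FP{n})^{\bot}$ rather than a kernel of a $\Tor$ functor, so Proposition \ref{Results-from-HJ}(2) does not apply directly to it, and the duality-pair argument is the clean path. I do not anticipate a serious obstacle here; the entire content is the bookkeeping of checking the two duality-pair axioms and the two closure properties (products, coproducts), all of which are already recorded in \cite{bravo:15:fp}. The one point to be careful about is that the Pontryagin-dual characterization and the closure properties cited from \cite{bravo:15:fp} are stated for $n > 1$ including $n = \infty$, which is exactly the hypothesis of the proposition, so there is no gap at the boundary case $n = \infty$ (absolutely clean modules), and the argument covers it uniformly.
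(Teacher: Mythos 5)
Your proposal is correct and follows essentially the same strategy as the paper's proof: establish that $(\FPinj{n}, \FPflat{n})$ is a duality pair and then read off both conclusions from the Holm--J{\o}rgensen machinery. The only (cosmetic) difference is in the preenveloping direction: the paper invokes Proposition~\ref{Results-from-HJ}(1) directly (closure under pure submodules plus set-indexed direct products gives preenveloping), whereas you invoke the product-closed-duality-pair criterion from \cite[Theorem 3.1]{holm:09:duality}; since the first half of that theorem already delivers closure under pure submodules for the left member of any duality pair, the two routes collapse to the same argument. One small slip: the equivalence ``$M$ is $\FP{n}$-injective iff $M^+$ is $\FP{n}$-flat'' is \cite[Proposition 3.6]{bravo:15:fp}, not 3.5 (the latter is the flat-to-injective direction, used in Proposition~\ref{prop.cov.env}); this does not affect the correctness of the argument.
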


\begin{proof}
Since the class of $\FP{n}$-injective modules is closed under pure submodules and under set indexed direct products it is preenveloping (by Proposition 3.1. (1))
By {\cite[Propositions 3.6, Proposition 3.9 and Proposition 3.10]{bravo:15:fp}}, the pair $(\FPinj{n}, \FPflat{n})$ is a duality pair. Also, from \cite[Proposition 3.10]{bravo:15:fp}, the class $\FPinj{n}$ is closed under arbitrary direct sums, so the duality pair is coproduct closed. Therefore the class $\FPinj{n}$ is covering. 
%Since $\FPinj{n}$ is also closed under direct products, by \cite[Proposition 3.10]{bravo:15:fp},  the class $\FPinj{n}$ is also preenveloping.
\end{proof}

%Another immediate application of \cite[Theorem 3.1]{holm:09:duality}  gives the following results.

%\begin{proposition}[{\cite[Proposition 4.5]{bravo:15:fp}}]
%For any ring $R$ and every $n \ge 0$, the pair $(\FPflat{n}, \FPflat{n}^{\bot})$ is a perfect cotorsion pair.
%\end{proposition}

%\begin{proof}
%Since $R \in \FPflat{n}$ for all $n \ge 0$, and $\FPflat{n}$ is closed under extensions, the result follows from \cite[Propositions 3.11]{bravo:15:fp}, and \cite[Theorem 3.1]{holm:09:duality}.
%\end{proof}

When the class of $\FP{n}$-injective modules also contains the ground ring $R$, another application of {\cite[Theorem 3.1]{holm:09:duality}} together with {\cite[Propositions 3.6, Proposition 3.9 and Proposition 3.10]{bravo:15:fp}} gives the following result.
\begin{proposition}[{\cite[Theorem 4.4]{bravo:15:fp}} and {\cite[Proposition 3.11]{GP}}]
If $n>1$ and $R$ is a self $\FP{n}$-injective ring then the pair $(\FPinj{n},\FPinj{n}^{\bot})$ is a perfect cotorsion pair.
\end{proposition}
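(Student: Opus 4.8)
The plan is to realize $(\FPinj{n},\FPflat{n})$ as a \emph{perfect} duality pair in the sense of Holm and J\o rgensen, and then to invoke part (3) of \cite[Theorem 3.1]{holm:09:duality}. Recall that being perfect asks three things of the class $\mathcal{M}=\FPinj{n}$: that it be closed under arbitrary direct sums, that it be closed under extensions, and that $R\in\mathcal{M}$.

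First I would recall, exactly as in the proof of Proposition \ref{prop.cov.env2}, that $(\FPinj{n},\FPflat{n})$ is a duality pair: by \cite[Proposition 3.6]{bravo:15:fp} a module $M$ is $\FP{n}$-injective if and only if $M^{+}$ is $\FP{n}$-flat (this is where $n>1$ enters), and by \cite[Proposition 3.9 and Proposition 3.11]{bravo:15:fp} the class $\FPflat{n}$ is closed under finite direct sums and under direct summands, so condition (2) in the definition of a duality pair holds. Next, the closure-properties results recalled in Section \ref{Preliminaries} (that is, \cite[Proposition 3.9 and Proposition 3.10]{bravo:15:fp}) give that $\FPinj{n}$ is closed under extensions and under arbitrary direct sums. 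Finally, the hypothesis that $R$ is self $\FP{n}$-injective says precisely that $R\in\FPinj{n}$. Hence all three extra conditions are met and the duality pair is perfect.

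Applying \cite[Theorem 3.1]{holm:09:duality}(3) then yields that $(\FPinj{n},\FPinj{n}^{\bot})$ is a perfect cotorsion pair, which is the claim. (The same theorem moreover gives completeness, with $\FPinj{n}$ covering and $\FPinj{n}^{\bot}$ enveloping, but only the cotorsion-pair statement is asserted here.) I do not anticipate any genuine obstacle: every ingredient has already been assembled in the earlier part of this section, and the only points requiring care are to confirm that ``self $\FP{n}$-injective ring'' is interpreted as $R\in\FPinj{n}$ and that the standing assumption $n>1$ is in force, so that the Pontryagin-dual description of $\FPinj{n}$ underlying the duality-pair structure is available.
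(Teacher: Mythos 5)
Your argument is exactly the paper's: the authors prove this by observing that, under the hypothesis $R\in\FPinj{n}$, the duality pair $(\FPinj{n},\FPflat{n})$ (already assembled in Proposition~\ref{prop.cov.env2} from the Pontryagin-dual characterization and the closure properties in \cite[Propositions 3.6, 3.9, 3.10]{bravo:15:fp}) becomes perfect, and then they invoke \cite[Theorem 3.1(3)]{holm:09:duality}. Your proof is correct and follows the same route.
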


A sufficient condition for the class of $\FPinj{n}$-modules be enveloping is that its left orthogonal class, $^{\bot} \FPinj{n}$, be closed under pure quotients. In terms of duality pairs, this condition is equivalent to $(^{\bot} \FPinj{n}, \FPflat{n}^{\bot})$ being a duality pair.
%We recall that, by \cite{bravo:15:fp}, Corollary 4.2, the pair $(^{\bot} \FPinj{n}, \FPinj{n})$ is a complete cotorsion pair in $R-Mod$ for every $n \ge 0$. We 
%
%\textbf{\emph{Gorenstein injective envelopes and covers over Ding Chen rings}}\\
More precisely, we have:
\begin{proposition}
Let $n>1$ or $n=\infty$. The following statements are equivalent.
\begin{enumerate}[leftmargin=*]
\item The class ${}^{\bot} \FPinj{n}$ is closed under pure quotients. \label{left-perp-closed-pure}
\item The pair $({}^{\bot} \FPinj{n}, \FPflat{n}^{\bot})$ is a duality pair. \label{perps-duality-pair}
\end{enumerate}
If these conditions hold then the class of $\FPinj{n}$-modules is enveloping.
\end{proposition}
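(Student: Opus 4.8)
The plan is to obtain $(\ref{perps-duality-pair}) \Rightarrow (\ref{left-perp-closed-pure})$ directly from the structure theorem for duality pairs, to deduce the enveloping conclusion from $(\ref{left-perp-closed-pure})$ through the theory of complete cotorsion pairs, and to prove $(\ref{left-perp-closed-pure}) \Rightarrow (\ref{perps-duality-pair})$ by unwinding the defining condition of a duality pair with Pontryagin duals.

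First, $(\ref{perps-duality-pair}) \Rightarrow (\ref{left-perp-closed-pure})$ is immediate: if $({}^{\bot}\FPinj{n}, \FPflat{n}^{\bot})$ is a duality pair, then \cite[Theorem 3.1]{holm:09:duality} says that its left-hand class ${}^{\bot}\FPinj{n}$ is closed under pure submodules, pure quotients and pure extensions, which is $(\ref{left-perp-closed-pure})$. For the enveloping conclusion I will use only $(\ref{left-perp-closed-pure})$. Since $\FP{n}$ is, up to isomorphism, a set, $\FPinj{n} = \FP{n}^{\bot}$ is the right-hand class of the complete cotorsion pair $({}^{\bot}\FPinj{n}, \FPinj{n})$, whose left-hand class is always closed under arbitrary direct sums. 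If $(\ref{left-perp-closed-pure})$ holds, ${}^{\bot}\FPinj{n}$ is then also closed under direct limits, since any direct limit of a directed system is a pure quotient of the direct sum of that system. A complete cotorsion pair whose left-hand class is closed under direct limits is perfect (see \cite{E-J-Vol-1}, \cite{Gobel-Trlifaj}); in particular $\FPinj{n}$ is enveloping.

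It remains to prove $(\ref{left-perp-closed-pure}) \Rightarrow (\ref{perps-duality-pair})$. Put $\mathcal{D} = \{\, M : M^{+} \in \FPflat{n}^{\bot}\,\}$. As $\FPflat{n}^{\bot}$ is the right-hand class of a cotorsion pair, it is closed under direct summands and finite direct sums, so it is enough to check that $\mathcal{D} = {}^{\bot}\FPinj{n}$. The computation rests on the natural isomorphism $\Ext^{1}_{R}(X, Y^{+}) \cong \Tor^{R}_{1}(Y, X)^{+}$ (for $X$ a left and $Y$ a right $R$-module), on the faithful exactness of $(-)^{+}$, and on the fact \cite[Proposition 3.5, Proposition 3.6]{bravo:15:fp} that $Y$ is $\FP{n}$-flat if and only if $Y^{+}$ is $\FP{n}$-injective. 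Combining these, one sees that $M^{+} \in \FPflat{n}^{\bot}$ if and only if $\Tor^{R}_{1}(F, M) = 0$ for every $\FP{n}$-flat $F$, and that the inclusion ${}^{\bot}\FPinj{n} \subseteq \mathcal{D}$ holds with no extra hypothesis: if $M \in {}^{\bot}\FPinj{n}$ and $F$ is $\FP{n}$-flat, then $F^{+}$ is $\FP{n}$-injective, hence $\Tor^{R}_{1}(F, M)^{+} \cong \Ext^{1}_{R}(M, F^{+}) = 0$ and so $\Tor^{R}_{1}(F, M) = 0$.

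The heart of the matter, and the step I expect to be the main obstacle, is the reverse inclusion $\mathcal{D} \subseteq {}^{\bot}\FPinj{n}$, which is exactly where hypothesis $(\ref{left-perp-closed-pure})$ enters. Given $M \in \mathcal{D}$ and $I \in \FPinj{n}$, the module $I^{+}$ is $\FP{n}$-flat, so the isomorphism above yields $\Ext^{1}_{R}(M, I^{++}) \cong \Tor^{R}_{1}(I^{+}, M)^{+} = 0$; one must then transport this vanishing along the canonical pure monomorphism $I \hookrightarrow I^{++}$. When $M$ is finitely presented this is automatic: purity of $0 \to I \to I^{++} \to I^{++}/I \to 0$ forces $\Hom_{R}(M, I^{++}) \to \Hom_{R}(M, I^{++}/I)$ to be surjective, whence $\Ext^{1}_{R}(M, I)$ injects into $\Ext^{1}_{R}(M, I^{++}) = 0$. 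For arbitrary $M \in \mathcal{D}$ the plan is to reduce to this case. By part $\eqref{Ker-first-half}$ of Proposition \ref{Results-from-HJ} the class $\mathcal{D} = \kker{\Tor^{R}_{1}(\FPflat{n}, -)}$ is the first half of a perfect cotorsion pair, hence closed under direct limits, and (being a kernel of $\Tor$) under pure quotients and pure submodules as well; its finitely presented objects already lie in ${}^{\bot}\FPinj{n}$ by the previous sentence, and $(\ref{left-perp-closed-pure})$ makes ${}^{\bot}\FPinj{n}$ closed under direct limits. Thus it suffices to show that every module in $\mathcal{D}$ is a direct limit of finitely presented modules lying in $\mathcal{D}$, i.e.\ that the cotorsion pair determined by $\mathcal{D}$ is, in the appropriate sense, of finite type. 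Establishing this — via the standard presentation of a module in $\kker{\Tor^{R}_{1}(\FPflat{n}, -)}$ as a direct limit of finitely presented quotients of free modules chosen so that the relevant $\Tor$'s already vanish — is the technical core; granted it, $\mathcal{D} \subseteq {}^{\bot}\FPinj{n}$ follows and $(\ref{left-perp-closed-pure}) \Rightarrow (\ref{perps-duality-pair})$ is established.
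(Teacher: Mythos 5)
Your $(2)\Rightarrow(1)$ direction and your route to the enveloping conclusion are both fine (the latter via direct-limit closure rather than via the duality-pair machinery, but that is a legitimate alternative). The genuine gap is in $(1)\Rightarrow(2)$. Your plan reduces $\mathcal{D}\subseteq {}^{\bot}\FPinj{n}$ to the claim that every module in $\mathcal{D}=\kker{\Tor^R_1(\FPflat{n},-)}$ is a direct limit of \emph{finitely presented} modules lying in $\mathcal{D}$, and you explicitly leave that claim — ``the technical core'' — unproved. That is not a small omission: such a finite-type statement for the left half of the cotorsion pair cogenerated by $\FPflat{n}^\bot$ is not something that follows from the cited facts (Proposition~\ref{Results-from-HJ}(2) gives perfection, hence closure under direct limits, but not that the class is generated by its finitely presented members), and I see no reason it should hold for an arbitrary ring. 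As written, the argument does not establish $(1)\Rightarrow(2)$.

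The missing idea that makes $(1)\Rightarrow(2)$ work without any finite-type detour is to exploit the \emph{completeness} of the cotorsion pair $({}^{\bot}\FPinj{n}, \FPinj{n})$ (which holds for every $n$ by \cite[Corollary 4.2]{bravo:15:fp}, since $\FP{n}$ is a set). Given $K$ with $K^+\in\FPflat{n}^{\bot}$, choose a special precover $0\to A\to B\to K\to 0$ with $A\in\FPinj{n}$ and $B\in{}^{\bot}\FPinj{n}$. Dualizing yields $0\to K^+\to B^+\to A^+\to 0$ with $A^+\in\FPflat{n}$ and $K^+\in\FPflat{n}^{\bot}$, so this sequence splits; hence the original sequence is pure exact. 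Now hypothesis $(1)$ applies directly: $K$ is a pure quotient of $B\in{}^{\bot}\FPinj{n}$, so $K\in{}^{\bot}\FPinj{n}$. Together with the easy inclusion you already proved (and closure of ${}^{\bot}\FPinj{n}$ under summands and arbitrary direct sums, automatic for the left half of a cotorsion pair), this gives that $({}^{\bot}\FPinj{n},\FPflat{n}^{\bot})$ is a duality pair.
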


\begin{proof}
\eqref{left-perp-closed-pure} $\Rightarrow$ \eqref{perps-duality-pair}. We show first that $K \in {}^{\bot} \FPinj{n}$ if and only if $K^+ \in \FPflat{n}^{\bot}$. 

If $K \in {}^{\bot} \FPinj{n}$ then for any $G \in \FPflat{n}$ we have 
\[\Ext_R^1(G, K^+) \cong \Ext_R^1(K,G^+) = 0
\]
since $G^+ \in \FPinj{n}$. So $K^+ \in \FPflat{n}^{\bot}$ whenever $K \in {}^{\bot} \FPinj{n}$. 

Conversely, let $K$ be such that $K^+ \in \FPflat{n}^{\bot}$. By \cite[Corollary 4.2]{bravo:15:fp}, the pair $({}^{\bot} \FPinj{n}, \FPinj{n})$ is a complete cotorsion pair in $\Rmod$ for every $n \ge 0$. So there exists an exact sequence $0 \rightarrow A \rightarrow B \rightarrow K \rightarrow 0$ with $A \in \FPinj{n}$ and with $B \in {}^{\bot} \FPinj{n}$. This gives an exact sequence 
\[
0 \rightarrow K^+ \rightarrow B^+ \rightarrow A^+ \rightarrow 0 
\]
with $A^+ \in \FPflat{n}$ and with $B^+ \in \FPflat{n}^{\bot}$. Thus the sequence $0 \rightarrow K^+ \rightarrow B^+ \rightarrow A^+ \rightarrow 0$ splits. 
Thus the initial sequence, $0 \rightarrow A \rightarrow B \rightarrow K \rightarrow 0$, is pure exact. Since the class ${}^{\bot} \FPinj{n}$ is closed under pure quotients, it follows that $K \in {}^{\bot} \FPinj{n}$.

The class $^{\bot} \FPinj{n}$ is closed under direct summands and (arbitrary) direct sums, so $(^{\bot} \FPinj{n}, \FPflat{n}^{\bot})$ is a duality pair.\\
\eqref{perps-duality-pair} $\Rightarrow$ \eqref{left-perp-closed-pure} follows from the definition of a duality pair.

Finally, assume that condition \eqref{perps-duality-pair} holds. Then by \cite[Theorem 3.1]{holm:09:duality} the pair $({}^{\bot} \FPinj{n}, \FPinj{n})$ is a perfect cotorsion pair, so the class $\FPinj{n}$ is enveloping.
\end{proof}

%A dual version of the previous result is given next.
%
%\begin{proposition}
%Let $n>1$ or $n=\infty$. The following statements are equivalent.
%\begin{enumerate}[leftmargin=*]
%\item The class $ \FPflat{n}^{\bot}$ is closed under pure submodules. \label{right-perp-closed-pure}
%\item The pair $(\FPflat{n}^{\bot},{}^{\bot} \FPinj{n})$ is a duality pair. \label{other-perps-duality-pair}
%\end{enumerate}
%\end{proposition}
%
%\begin{proof}
%\eqref{right-perp-closed-pure} $\Rightarrow$ \eqref{other-perps-duality-pair}. We first show that $X \in \FPflat{n}^{\bot}$ if and only if $X^+ \in {}^{\bot}\FPinj{n}$.
%
%Let $X \in \FPflat{n}^{\bot}$. Then for any $F \in \FPinj{n}$, we have that:
%\[
%0 = \Ext^1_R(F^+,X) \cong \Ext^1_R(X^+,F).
%\]
%Hence $X^+ \in {}^{\bot}\FPinj{n}$. Now suppose that $X \in \Rmod$ such that $X^+\in {}^{\bot}\FPinj{n}$. Then for any $F \in \FPflat{n}$ we have that 
%\[
%0 = \Ext^1_R(X^+,F^+) \cong \Ext^1(F,(X^+)^+).
%\]
%That is $X^{++} = (X^+)^+ \in \FPflat{n}^{\bot}$. Since  $X$ is isomorphic to a pure submodule of $X^{++}$, by hypothesis, we have that $X \in \FPflat{n}^{\bot}$ also.
%
%The class $\FPflat{n}^{\bot}$ is closed under direct summands and finite direct sums. Hence $(\FPflat{n}^{\bot},{}^{\bot}\FPinj{n})$ is a duality pair.
%
%\eqref{other-perps-duality-pair} $\Rightarrow$ \eqref{right-perp-closed-pure}. This follows from \cite[Theorem 3.1]{holm:09:duality}.
%\end{proof}

We recall that, by \cite[Corollary 4.2]{bravo:15:fp},  the pair $(\FPflat{n}, \FPflat{n}^{\bot})$ is a perfect cotorsion pair in $\Rmod$ for every $n \ge 0$. Hence by \cite[Theorem 3.4]{holm:08:purity}, the class $\FPflat{n}^{\bot}$ is enveloping. A natural question is then to ask when is $\FPflat{n}$ enveloping. To answer this we must introduce the notion of $n$-hereditary rings. A ring is said to be left $n$-hereditary if every finitely $(n-1)$-presented submodule of a finitely generated projective left module is also a projective module. We refer the reader to \cite{BravoParra} for more details on $n$-hereditary rings; among the results available in that reference is the following.
\begin{proposition}{\cite[Proposition 23]{BravoParra}}
For any $n>1$, we have that  ring $R$ is $n$-hereditary if and only if submodules of any FP$_n$-Flat module is again an FP$_n$-Flat module.
\end{proposition}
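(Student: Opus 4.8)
The plan is to deduce the statement from the equivalence, for our fixed integer $n>1$, of three conditions on $R$: (a) $R$ is $n$-hereditary; (b) every module in $\FP{n}$ has projective dimension at most $1$ (equivalently, flat dimension at most $1$, i.e. $\Tor^R_2(F,-)=0$ for every $F\in\FP{n}$); (c) the class $\FPflat{n}$ is closed under submodules. The proposition is precisely the equivalence $(a)\Leftrightarrow(c)$, and I would obtain it through $(a)\Rightarrow(b)\Rightarrow(c)$ and $(c)\Rightarrow(b)\Rightarrow(a)$. This route uses no duality pairs, only the definitions and the long exact sequence for $\Tor$; the case $n=\infty$ is handled identically.

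For $(a)\Leftrightarrow(b)$ I would first record two elementary syzygy facts. If $F\in\FP{n}$ and $P_0\twoheadrightarrow F$ with $P_0$ finitely generated projective, then truncating a finitely generated projective resolution of $F$ shows $\kker{P_0\to F}\in\FP{n-1}$; as $n>1$, this syzygy is in particular finitely presented. Conversely, if $K\subseteq P$ with $P$ finitely generated projective and $K\in\FP{n-1}$, then splicing an exact sequence $Q_{n-1}\to\cdots\to Q_0\to K\to 0$ with the $Q_i$ finitely generated projective onto $0\to K\to P\to P/K\to 0$ exhibits $P/K\in\FP{n}$ having first syzygy $K$. Granting these: for $F\in\FP{n}$, the submodule $\kker{P_0\to F}$ of $P_0$ is finitely $(n-1)$-presented, so if $R$ is $n$-hereditary it is projective, i.e. $\mathrm{pd}_R(F)\le1$, giving $(a)\Rightarrow(b)$; and given $K$ as above, $(b)$ applied to $P/K$ forces its first syzygy $K$ to be projective, giving $(b)\Rightarrow(a)$. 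The parenthetical equivalence in $(b)$ is the standard fact that a finitely presented flat module is projective, applied to the (finitely presented) first syzygy of $F$: if $\Tor^R_2(F,-)=0$, that syzygy is flat, hence projective.

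The homological content is $(b)\Leftrightarrow(c)$. For $(b)\Rightarrow(c)$: given $0\to A\to B\to C\to0$ exact with $B\in\FPflat{n}$, the long exact sequence for $\Tor^R(F,-)$ contains $\Tor^R_2(F,C)\to\Tor^R_1(F,A)\to\Tor^R_1(F,B)$ for each $F\in\FP{n}$; the outer terms vanish by $(b)$ and by $B\in\FPflat{n}$, so $A\in\FPflat{n}$. For $(c)\Rightarrow(b)$ the key idea is to test $\Tor^R_2(F,-)$ not on $F$ but on an arbitrary module $X$: choose a free presentation $0\to X'\to L\to X\to0$; since $L$ is flat it is $\FP{n}$-flat, hence $X'$ is $\FP{n}$-flat by $(c)$, and dimension shifting then gives $\Tor^R_2(F,X)\cong\Tor^R_1(F,X')=0$ for every $F\in\FP{n}$. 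As $X$ is arbitrary, $\Tor^R_2(F,-)=0$ for all $F\in\FP{n}$, which is $(b)$.

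I expect the crux to be $(c)\Rightarrow(b)$: it is tempting to apply closure under submodules only to the first syzygy $\kker{P_0\to F}$ of $F$, but that merely returns another $\FP{n}$-flat module, which is far too weak to conclude it is flat; one genuinely needs the free-presentation trick, which exploits that every module is a quotient of a flat module by a submodule. A secondary point is the left/right bookkeeping --- $\FP{n}$-flatness of a left module is tested against finitely $n$-presented right modules, so ``$n$-hereditary'' must be read on the matching side throughout --- and the hypothesis $n>1$ is used precisely so that the relevant syzygies are finitely presented, making ``finitely presented flat $\Rightarrow$ projective'' available.
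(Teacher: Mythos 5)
Your proof is correct, and since the paper only cites this result to \cite[Proposition~23]{BravoParra} without giving an argument, there is no in-paper proof to compare against; I can only evaluate the proposal on its own terms. The chain $(a)\Leftrightarrow(b)\Leftrightarrow(c)$ is sound: the syzygy facts in $(a)\Leftrightarrow(b)$ are standard (truncating a resolution, splicing, Schanuel to transfer ``some syzygy is projective'' to ``this syzygy is projective''), the use of ``finitely presented flat $\Rightarrow$ projective'' is where $n>1$ enters, $(b)\Rightarrow(c)$ is the obvious $\Tor$ long exact sequence, and the free-presentation dimension-shift for $(c)\Rightarrow(b)$ is exactly the right device --- you are right that testing closure only on the syzygy of an $\FP{n}$-module would lead nowhere. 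Your remark about left/right bookkeeping is also apt: $\FPflat{n}$ for left modules is defined via $\Tor_1(F,-)$ with $F$ ranging over right modules in $\FP{n}$, so the ``$n$-hereditary'' hypothesis that actually gets used is the one on the opposite side from the modules whose submodules one controls; the paper's statement is silent on this, and your flagging of the issue is the correct reading rather than a defect in the proof.
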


With this at hand we can provide the following answer.

\begin{proposition}
Let $n>1$, then every module has an epimorphic $\FPflat{n}$-envelope if and only if $R$ is $n$-hereditary.
\end{proposition}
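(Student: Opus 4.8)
The plan is to prove the two implications separately, leveraging the fact that $(\FPflat{n}, \FPflat{n}^{\bot})$ is a perfect cotorsion pair, so that $\FPflat{n}$-covers always exist; the question is purely about when the cover map can be taken to be an epimorphism. For the direction assuming $R$ is $n$-hereditary, I would start from an arbitrary module $M$ and take a $\FPflat{n}$-cover $\varphi\colon F \to M$, which exists by Proposition \ref{prop.cov.env}. Since the cotorsion pair is complete, one can also produce a surjection $G \to M$ with $G$ an $\FPflat{n}$-module (e.g.\ by using that $\FPflat{n}$ contains all the projectives, or via the special precover coming from the cotorsion pair). Using the general factorization property of covers, the map $G \to M$ factors through $\varphi$, and since $G \to M$ is onto, $\varphi$ is onto as well. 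Wait — this argument does not use $n$-hereditariness at all, which signals that the surjectivity of $\FPflat{n}$-covers is automatic and the real content of the "only if'' direction is the word \emph{epimorphic} being upgraded, or I am misreading the statement; more plausibly, "epimorphic $\FPflat{n}$-envelope'' refers to \emph{envelopes} (not covers) that happen to be epimorphisms, and the correct reading is that the \emph{preenvelope/envelope} $M \to F'$ is an epimorphism. Under that reading, an epimorphic $\FPflat{n}$-preenvelope $M \twoheadrightarrow F'$ forces $F'$ to be a quotient, and one relates this to submodule-closure as follows.

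So, assuming $R$ is $n$-hereditary: by \cite[Proposition 23]{BravoParra}, $\FPflat{n}$ is closed under submodules, hence (being already closed under products by \cite[Proposition 3.11]{bravo:15:fp}) it is closed under arbitrary products and submodules. A class closed under products and pure submodules is preenveloping by Proposition \ref{Results-from-HJ}(1); but being closed under \emph{all} submodules is much stronger and, combined with the existence of preenvelopes, lets one replace a preenvelope $M \to F'$ by its corestriction to the image $\im{M \to F'}$, which lies in $\FPflat{n}$ by submodule-closure. One must then check this corestriction $M \twoheadrightarrow \im{M\to F'}$ is still a preenvelope: given any map $M \to N$ with $N \in \FPflat{n}$, factor through the original preenvelope and then observe the factoring map, restricted to the image, still works — this is routine. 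Finally pass from an epimorphic preenvelope to an epimorphic envelope by the usual argument that an envelope exists (perfect cotorsion pair $\Rightarrow$ $\FPflat{n}^{\bot}$ is enveloping, but we want $\FPflat{n}$ enveloping, which needs the duality-pair machinery of Proposition \ref{prop.cov.env}) and any envelope is a direct summand of a preenvelope, hence an epimorphism when the preenvelope is.

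For the converse, suppose every module has an epimorphic $\FPflat{n}$-envelope; I would show $\FPflat{n}$ is closed under submodules and invoke \cite[Proposition 23]{BravoParra} to conclude $R$ is $n$-hereditary. Take $N \in \FPflat{n}$ and a submodule $K \le N$. Let $\psi\colon K \to F$ be an epimorphic $\FPflat{n}$-envelope of $K$. The inclusion $K \hookrightarrow N$ factors through $\psi$, say $K \xrightarrow{\psi} F \xrightarrow{\alpha} N$ with $\alpha\psi$ the inclusion; in particular $\psi$ is mono, and since it is also epi it is an isomorphism, giving $K \cong F \in \FPflat{n}$. The main obstacle I anticipate is precisely pinning down which notion — cover versus envelope — the statement intends and keeping the duality-pair hypotheses ($n>1$) aligned so that \cite[Proposition 23]{BravoParra} applies; once the correct reading is fixed, both directions reduce to short factorization arguments plus the cited submodule-closure characterization of $n$-hereditary rings, with no serious computation.
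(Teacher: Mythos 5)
The paper's own proof is a two-line affair: by \cite[Proposition 23]{BravoParra}, $R$ is $n$-hereditary iff $\FPflat{n}$ is closed under submodules, and since $\FPflat{n}$ is always closed under products, this is equivalent to $\FPflat{n}$ being a \emph{pretorsion-free class}; then \cite[Proposition 4.1]{RadaSaorin} says exactly that a class is pretorsion-free iff every module has an epimorphic envelope in that class. Your converse direction is correct and is essentially one half of the Rada--Saor\'in argument: factor the inclusion $K \hookrightarrow N$ (with $N \in \FPflat{n}$) through the epimorphic envelope $\psi\colon K \twoheadrightarrow F$, conclude $\psi$ is mono hence iso, so $K \cong F \in \FPflat{n}$.

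Your forward direction, however, has a genuine gap. You correctly produce an epimorphic $\FPflat{n}$-\emph{preenvelope} by corestricting any $\FPflat{n}$-preenvelope $M \to F'$ to its image, which lies in $\FPflat{n}$ by submodule-closure. But the statement asks for an epimorphic \emph{envelope}, and the bridge you propose --- invoking ``the duality-pair machinery of Proposition \ref{prop.cov.env}'' --- does not close the gap: that proposition only proves $\FPflat{n}$ is \emph{pre}enveloping, not enveloping, and you even flag this mid-sentence without resolving it. The missing argument, which is what the cited Rada--Saor\'in result packages up, is roughly: let $K_0 = \bigcap\{K \subseteq M : M/K \in \FPflat{n}\}$; then $M/K_0$ embeds in $\prod_K M/K$, so $M/K_0 \in \FPflat{n}$ by product- and submodule-closure; the quotient map $M \twoheadrightarrow M/K_0$ is a preenvelope (since any $g\colon M\to N$ with $N \in \FPflat{n}$ has $\ker g \supseteq K_0$, because $M/\ker g \cong \im{g} \in \FPflat{n}$), and it is an envelope because any endomorphism of $M/K_0$ commuting with a surjection is the identity. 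Separately, the opening paragraph of your write-up, second-guessing whether ``epimorphic envelope'' might mean a cover, should be cut: the statement means envelope, and in any case every module's $\FPflat{n}$-cover is automatically surjective since $R \in \FPflat{n}$.
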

\begin{proof}
 We first recall that a class of $R$-modules is a pretorsion-free class if it is closed under submodules and direct products \cite[Chapter VI, \S 1]{rings-of-quot}.
 
 Now, R is $n$-hereditary if and only if $\FPflat{n}$ is closed under submodules, that is, if and only if $\FPflat{n}$ is a pretorsion-free class, given that $\FPflat{n}$ is always closed under direct products. From \cite[Proposition 4.1]{RadaSaorin}, we have that    $\FPflat{n}$ is a pretorsion-free class if and only if every left $R$-module has an epimorphic $\FPflat{n}$-envelope.
\end{proof}

Next, we recall that the FP-injective modules (in the sense of Stenstr\"{o}m \cite{stenstrom:70:coherent}) are the $\FP{1}$-injective modules in the sense of Bravo and P\'erez. A module $E$ is called FP-injective if $\Ext^1(A, E) = 0$ for all finitely presented $R$-modules $A$. The \emph{FP-injective dimension} of an $R$-module $B$ is defined to be the least integer $n \ge 0$ such that $\Ext^{n+1}(A, B) = 0$ for all finitely presented $R$-modules $A$. The FP-injective dimension of $B$ is denoted by $\FPid{B}$ and equals $\infty$ if no such $n$  exists.

We recall that $R$ is an \emph{$n$-FC ring} if it is both left and right coherent and $\FPid{_RR}$ and $\FPid{R_R}$ are both less than or equal to $n$. Ding and Chen introduced the {$n$-FC rings} in \cite{ding-chen-1} and \cite{ding:96:coherent}. Later, Gillespie in \cite{gillespie:10:model}, introduced the \emph{Ding-Chen rings}. A ring $R$ is called a Ding-Chen ring if it is an $n$-FC ring for some $n \ge 0$. Examples of Ding-Chen rings include all Gorenstein rings (in the sense of Iwanaga, i.e. left and right noetherian rings of finite self injective on both sides less or equal than $n$).

 We also recall that, over any ring $R$, an R-module $M$ is \emph{Ding injective} if there exists an exact
complex of injectives
\[
\textbf{I} =\cdots \rightarrow I_1 \rightarrow I_0 \rightarrow I_{-1} \rightarrow \cdots
\]
 with $M = \kker{I_0 \rightarrow I_{-1}}$
and such that  $\Hom_R(A, \textbf{I})$ is exact for any FP-injective module $A$. It follows from the definition that any Ding injective module is a Gorenstein injective module.  %We will use $\mathcal{DI}$ to denote the class of Ding injective modules.

 The dual notion is that of Ding projective modules. A module $G$ is \emph{Ding projective} if there exists an exact complex of projective modules
 \[
\textbf{P} = \cdots \rightarrow P_1 \rightarrow P_0 \rightarrow P_{-1} \rightarrow \cdots
 \]
with $G =\kker {P_0 \rightarrow P_{-1}}$ and such that  $\Hom_R(\textbf{P},F)$ is exact for any flat module $F$. In particular, any Ding projective module is Gorenstein projective.

For the rest of this section we let $\mathcal{W}$ denote the class of modules of finite flat dimension.  It is shown in \cite{ding:11:cotorsion}, that whenever $R$ is a Ding-Chen ring, then the pair  $(\mathcal{W}, \mathcal{W}^{\bot})$ is a complete hereditary cotorsion pair and in \cite{gillespie:10:model}, it is shown that the class $\mathcal{W}^\bot$ is the class of Ding-injective modules, which we denote by $\mathcal{DI}$.

%\emph{Duality pairs and Ding injective modules}

We prove that over a Ding-Chen ring the complete cotorsion pair $(\mathcal{W}, \mathcal{DI})$ is actually a perfect cotorsion pair, and  therefore that the class $\mathcal{DI}$ is enveloping. Before doing this we need to recall a few definitions and results.

A module $N$ is said to be \emph{Gorenstein flat} if there exists an exact complex of flat modules
\[
\textbf{F} = \cdots \rightarrow F_1 \rightarrow F_0 \rightarrow F_{-1} \rightarrow \cdots
\]
such that $I \otimes \textbf{F}$ is still exact for any injective module $I$ and  $N$ is one of the cycles of the complex $\textbf{F}$. We denote by $\mathcal{GF}$ the class of Gorenstein flat modules. Let $\mathcal{GC}$ denote the class of \emph{Gorenstein cotorsion} modules, that is the right $\Ext^1$-orthogonal class of that of Gorenstein flat modules. We know from \cite[Theorem 2.12]{Enochs-Jenda-Lopez_Ramos} that over any coherent ring $R$, the pair $(\mathcal{GF}, \mathcal{GC})$ is a complete hereditary cotorsion pair.

We also recall the following results.

\begin{theorem*}[{\cite[Theorem 1]{Cheatham-Stone}}] The following statements are equivalent:
\begin{enumerate}[leftmargin=*]
\item $R$ is a left coherent ring.
\item $_RM$ is absolutely pure if and only if $M^{ + +}$ is an injective left $R$-module.
\item $M_R$ is flat if and only if $M^{ + +}$ is a flat right $R$-module.
\end{enumerate}
\end{theorem*}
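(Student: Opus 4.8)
The plan is to prove the equivalences by the cycle $(1)\Rightarrow(2)\Rightarrow(3)\Rightarrow(1)$, using three facts that hold over an arbitrary ring $R$; throughout write $N^{+}=\Hom_{\Z}(N,\Q/\Z)$. (D1) There is a natural isomorphism $\Ext_R^{1}(-,N^{+})\cong\Tor^{R}_{1}(N,-)^{+}$; consequently a right $R$-module $N$ is flat if and only if $N^{+}$ is an injective left $R$-module, and dually a module is absolutely pure exactly when it is a pure submodule of an injective module. (D2) The evaluation map $M\to M^{++}$ is always a pure monomorphism, a pure submodule of a flat module is flat, and a pure exact sequence $0\to A\to B\to C\to 0$ becomes split exact after applying $(-)^{+}$. (D3) The module $R^{+}$ is an injective cogenerator of $\Rmod$ — it is injective by (D1) since $R_{R}$ is flat, and it is a cogenerator since $\Q/\Z$ cogenerates the abelian groups — so every left module embeds into a product $\prod_{i\in I}R^{+}=(R^{(I)})^{+}$.

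For $(1)\Rightarrow(2)$: the implication ``$M^{++}$ injective $\Rightarrow M$ absolutely pure'' is immediate from (D1) and (D2) and uses no hypothesis on $R$. For the converse, let $M$ be absolutely pure and pick a pure exact sequence $0\to M\to E\to C\to 0$ with $E$ injective; by (D2) its dual splits, so $M^{+}$ is a direct summand of $E^{+}$. Left coherence is used through the fact that over a left coherent ring the character module of every injective left module is flat: for finitely presented $F$ one has $\Ext_R^{1}(F,E^{++})\cong\Ext_R^{1}(F,E)^{++}=0$ (using finitely generated free resolutions of $F$ and exactness of $(-)^{+}$), and since $\Tor^{R}_{1}(E^{+},F)^{+}\cong\Ext_R^{1}(F,E^{++})$ this forces $\Tor^{R}_{1}(E^{+},F)=0$, so $E^{+}$ is flat. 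Hence $M^{+}$ is flat and $M^{++}=(M^{+})^{+}$ is injective by (D1). For $(2)\Rightarrow(3)$: ``$M^{++}$ flat $\Rightarrow M$ flat'' is again (D2); conversely, if the right module $M$ is flat then $M^{+}$ is an injective left module by (D1), in particular absolutely pure, so $(2)$ gives that $M^{+++}=(M^{+})^{++}$ is injective, and applying (D1) to the right module $M^{++}$ (whose character module is $M^{+++}$) shows that $M^{++}$ is flat.

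For $(3)\Rightarrow(1)$: first, $(3)$ already forces the character module of every injective left module to be flat. Indeed, if $E$ is injective then by (D3) it is a direct summand of some $(R^{(I)})^{+}$, so $E^{+}$ is a direct summand of $(R^{(I)})^{++}$; the latter is flat by $(3)$ because $R^{(I)}$ is free, hence flat, so $E^{+}$ is flat. It then remains to deduce left coherence of $R$ from the property that the character module of every injective left module is flat. This last implication is the step I expect to be the genuine obstacle — it is exactly the characterization of left coherent rings extracted from Chase's theorem (equivalently, that arbitrary direct products of flat right $R$-modules are again flat) — and it is the only place in the argument where a real ring-theoretic input is required; everything else is formal manipulation with the functor $(-)^{+}$, with purity, and with the adjunction behind (D1).
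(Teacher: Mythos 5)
The paper does not actually prove this statement; it is quoted verbatim as \cite[Theorem 1]{Cheatham-Stone} and used as a black box in the proof of Proposition~\ref{(W,GC)-dual-pair}. So your proposal cannot really be compared to ``the paper's own proof'' --- but it can be assessed on its merits.

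The cyclic strategy and the facts you label (D1)--(D3) are correct and assembled correctly. In $(1)\Rightarrow(2)$ the essential lemma --- over a left coherent ring the character module $E^{+}$ of any injective left module $E$ is flat --- is argued correctly: over a coherent ring a finitely presented module $F$ is of type $\FP{\infty}$, so one has natural isomorphisms $\Tor_1^{R}(E^{+},F)^{+}\cong\Ext^1_R(F,E^{++})\cong\Ext^1_R(F,E)^{++}=0$, and faithfulness of $(-)^{+}$ kills $\Tor_1^R(E^+,F)$. The step $(2)\Rightarrow(3)$ is a clean formal chase, and the ``$M^{++}$ flat $\Rightarrow M$ flat'' halves in both $(2)$ and $(3)$ follow from purity of $M\hookrightarrow M^{++}$, as you say.

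The genuine gap is where you flag it, in $(3)\Rightarrow(1)$. You correctly reduce $(3)$ to the statement that $E^{+}$ is flat for every injective left module $E$, but you then stop and appeal to ``Chase's theorem'' without supplying the bridge. Two things are missing. First, you need to pass from ``$E^{+}$ flat for all injective $E$'' to ``arbitrary products of flat right modules are flat''; this is not a restatement of Chase but a short argument of its own: given flat right modules $F_j$, each $F_j^{+}$ is injective, $\bigoplus_j F_j^{+}$ is a \emph{pure} submodule of the injective module $\prod_j F_j^{+}$, so applying $(-)^{+}$ splits this inclusion and exhibits $\bigl(\bigoplus_j F_j^{+}\bigr)^{+}\cong\prod_j F_j^{++}$ as a direct summand of the flat module $\bigl(\prod_j F_j^{+}\bigr)^{+}$; then $\prod_j F_j$ is flat as a pure submodule of the flat module $\prod_j F_j^{++}$. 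Second, once you have closure of flat right modules under products you still need Chase's theorem itself (products of flats flat $\Rightarrow$ $R$ left coherent), which is a substantive ring-theoretic result and not a formal manipulation with $(-)^{+}$. As written, your argument is therefore a correct reduction but not a complete proof; you are honest that this is where the real work lies, but the step should either be proved or explicitly delegated to a precise citation of Chase.
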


\begin{theorem*}[{\cite[Theorem 4.2]{yang:13:cotorsion}}] Let $R$ be a Ding-Chen ring and $M$ an $R$-module. Then $M$ has finite flat dimension if and only if $M$ has finite FP-injective (or absolutely pure) dimension
\end{theorem*}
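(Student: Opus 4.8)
The plan is to reduce at once to the case of an $n$-FC ring (by definition a Ding-Chen ring is $n$-FC for some $n\ge 0$) and then to prove the sharper statement that, over an $n$-FC ring, the flat dimension and the FP-injective dimension of any module are simultaneously finite, in fact each bounded by $n$ as soon as it is finite. The tools I would use are the Pontryagin-dual identities $\Ext^i_R(N,M^+)\cong\Tor^R_i(N,M)^+$ (valid for all $N$) and, for $N$ of type $FP_{\infty}$, $\Tor^R_i(M^+,N)\cong\Ext^i_R(N,M)^+$, together with the fact that over a coherent ring the finitely presented modules coincide with those of type $FP_{\infty}$, so that $\Ext^i_R(F,-)$ commutes with direct limits for every finitely presented $F$.

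First I would record two dimension identities valid over the (left, respectively right) coherent ring $R$. For a left module $M$, $\FPid{M}$ equals the flat dimension of the right module $M^+$: one tests the defining $\Ext$-vanishing on finitely presented modules, transports it through $(-)^+$ using the second identity above, and uses that $R/I$ is finitely presented for every finitely generated left ideal $I$ to recognise the result as a bound on the flat dimension of $M^+$. Symmetrically, the flat dimension of $M$ equals $\FPid{M^+}$. One could instead move between $M$ and $M^{++}$ via the Cheatham-Stone theorem recalled above, but the identities just stated make that detour unnecessary.

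Next comes the heart of the matter, where the $n$-FC hypothesis is used. I would prove: (i) over an $n$-FC ring every flat module has FP-injective dimension $\le n$; and (ii) over an $n$-FC ring every FP-injective module has flat dimension $\le n$. For (i) I would write a flat module as a direct limit of finitely generated free modules, observe that $\FPid{R^{k}}=\FPid{_RR}\le n$ for each such free module, and conclude from the direct-limit property of $\Ext^i_R(F,-)$ that FP-injective dimension does not increase under these limits. For (ii) I would use left coherence to see that the Pontryagin dual $E^+$ of an FP-injective module $E$ is a flat right module, apply (i) over $R^{\mathrm{op}}$ to get $\FPid{E^+}\le n$, and invoke the right-handed dimension identity to obtain that the flat dimension of $E$ equals $\FPid{E^+}\le n$. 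I expect (ii)---and, more broadly, keeping the left/right coherence bookkeeping straight so that (i) applies to $R^{\mathrm{op}}$ and the dualities are invoked on the correct side---to be the main obstacle; everything else is formal homological algebra.

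Finally I would assemble the equivalence by dimension shifting. If $M$ has finite flat dimension $d$, truncating a projective resolution yields a length-$d$ flat resolution of $M$; by (i) each term has FP-injective dimension $\le n$, and the long exact $\Ext$-sequences propagate this bound along the syzygies, giving $\FPid{M}\le n<\infty$. Conversely, if $\FPid{M}=d<\infty$, then the $(d-1)$-st cosyzygy of an injective coresolution of $M$ is FP-injective, so $M$ admits a length-$d$ coresolution by FP-injective modules; by (ii) each term has flat dimension $\le n$, and the dual dimension-shifting argument shows that $M$ has flat dimension $\le n<\infty$. This proves the stated equivalence.
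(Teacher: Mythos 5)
The paper does not prove this statement: it is recorded as a \texttt{theorem*} environment and attributed to \cite[Theorem 4.2]{yang:13:cotorsion}, so there is no in-house argument to compare against. Your proof is a correct self-contained derivation and follows the route one would expect: reduce to the $n$-FC case, establish the two bounds ``flat $\Rightarrow$ FP-injective dimension $\le n$'' and ``FP-injective $\Rightarrow$ flat dimension $\le n$'' via Lazard's theorem, character modules and the $\FPid{_RR},\FPid{R_R}\le n$ hypothesis, then finish by dimension shifting on both sides. Two points are worth making explicit in a write-up because the left/right coherence bookkeeping is exactly where such arguments go wrong. First, your two character-module dimension identities are not on equal footing: $\mathrm{fd}(M)=\FPid{M^+}$ holds over \emph{any} ring, since $\Tor^R_i(F,M)^+\cong\Ext^i_R(F,M^+)$ needs no finiteness hypothesis on $F$ and Tor-vanishing is detected on finitely presented modules by direct limits; whereas $\FPid{M}=\mathrm{fd}(M^+)$ genuinely uses left coherence, because $\Ext^i_R(F,M)^+\cong\Tor^R_i(M^+,F)$ is only available for $F\in\FP{\infty}$, and one needs the coherence-forced equality $\FP{1}=\FP{\infty}$ to reach all finitely presented $F$. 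Second, in the dimension shift along the length-$d$ flat resolution you tacitly use that $\FPid{A}\le n$ forces $\Ext^{n+j}_R(F,A)=0$ for all $j\ge 1$ and all finitely presented $F$; over a coherent ring this follows by shifting in the first variable (syzygies of finitely presented modules are again finitely presented), but it is worth recording since without coherence FP-injective dimension need not be absorbing in higher degrees. With those two remarks spelled out the argument is complete and yields the sharper bound by $n$ that you announce.
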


With all this at hand, we have the following result.

\begin{proposition} \label{(W,GC)-dual-pair}
Let $R$ be a Ding-Chen ring. Then $(\mathcal{W}, \mathcal{GC})$ is a perfect duality pair.
\end{proposition}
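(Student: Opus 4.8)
The plan is to verify the two defining conditions of a perfect duality pair for the pair $(\mathcal{W}, \mathcal{GC})$, where $\mathcal{W}$ is the class of modules of finite flat dimension over a Ding-Chen ring $R$: namely (i) that $M \in \mathcal{W}$ if and only if $M^+ \in \mathcal{GC}$, together with $\mathcal{GC}$ being closed under direct summands and finite direct sums; and (ii) perfection, i.e. that $\mathcal{W}$ is closed under arbitrary direct sums, closed under extensions, and contains $R$. The closure properties in (ii) are easy: $\mathcal{W}$ is closed under direct sums since flat dimension behaves well under coproducts, $\mathcal{W}$ is closed under extensions since $(\mathcal{W}, \mathcal{W}^\bot)$ is a cotorsion pair (hence $\mathcal{W}$ is resolving/coresolving to the extent needed, and in any case extension-closure of "finite flat dimension" is standard from the long exact $\mathrm{Tor}$ sequence), and $R \in \mathcal{W}$ trivially as $R$ is flat. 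Likewise $\mathcal{GC}$, being the right $\mathrm{Ext}^1$-orthogonal of a class, is automatically closed under direct summands and finite (indeed arbitrary) direct products, and in particular under finite direct sums.

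The real content is the biconditional $M \in \mathcal{W} \iff M^+ \in \mathcal{GC}$. First I would use the quoted theorem of Yang--Liu--Liang (\cite[Theorem 4.2]{yang:13:cotorsion}) to replace "finite flat dimension" by "finite FP-injective dimension" wherever convenient, since over a Ding-Chen ring these coincide. Over a (left and right) coherent ring, the Cheatham--Stone theorem gives the key transfer between a module and its double dual: $M$ is flat iff $M^{++}$ is flat, and $M$ is absolutely pure iff $M^{++}$ is injective. Combined with the standard facts that $M \hookrightarrow M^{++}$ is a pure monomorphism, that flat dimension and FP-injective dimension are detected by pure submodules and behave well under the Pontryagin dual (the dual of a flat is injective, the dual of an absolutely pure module over a coherent ring is flat by Cheatham--Stone, and more generally $\mathrm{fd}(M^+) = \mathrm{FP\text{-}id}(M)$ and $\mathrm{FP\text{-}id}(M^+) = \mathrm{fd}(M)$ over a coherent ring), one shows $M \in \mathcal{W}$ iff $M^+$ has finite FP-injective dimension iff $M^+ \in \mathcal{W}$ (the last equality again by \cite[Theorem 4.2]{yang:13:cotorsion} applied to $M^+$). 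Then I would invoke the identification $\mathcal{W}^\bot = \mathcal{DI} \subseteq \mathcal{GC}$ and, going the other way, the fact that over a coherent ring $(\mathcal{GF}, \mathcal{GC})$ is a hereditary cotorsion pair with $\mathcal{GF} \supseteq \{\text{flat modules}\}$, so that one can compare $\mathcal{GC}$ with $\mathcal{W}^\bot$. Concretely: since the dual of a Gorenstein flat module is Gorenstein injective (a standard fact over coherent rings) and since over a Ding-Chen ring the Gorenstein injectives are exactly $\mathcal{DI}$... — the cleanest route is to prove directly that $M \in \mathcal{W} \iff M^+ \in \mathcal{W}$ and then that $M^+ \in \mathcal{W} \iff M^+ \in \mathcal{GC}$, the latter because $\mathcal{GC} \cap (\text{duals}) $ relates to finite flat dimension via the hereditary cotorsion pair $(\mathcal{GF}, \mathcal{GC})$ and the inclusion $\mathcal{W} \subseteq \mathcal{GF}$.

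In more detail, for the direction $M \in \mathcal{W} \Rightarrow M^+ \in \mathcal{GC}$: from $M \in \mathcal{W}$ we get $M^+ \in \mathcal{W}$ (by the duality of flat and FP-injective dimensions over a coherent ring together with Yang--Liu--Liang), and since $\mathcal{W} \subseteq \mathcal{GF}^\bot{}^\bot$-type considerations are awkward, I instead note $\mathcal{W} = {}^\bot(\mathcal{W}^\bot) = {}^\bot\mathcal{DI}$ and that $\mathcal{W} \subseteq \mathcal{GF}$ is false in general — rather, one uses that modules of finite flat dimension lie in $\mathcal{GF}$ is also not what we want. The correct statement to use is: $\mathcal{W}^\bot = \mathcal{DI}$ and $\mathcal{DI} \subseteq \mathcal{GC}$ would give only a one-sided containment; so the genuinely efficient argument is the double-dual one. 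I would therefore structure the proof of the biconditional as: (a) $M \in \mathcal{W} \iff M^+ \in \mathcal{W}$, proved via Cheatham--Stone plus \cite[Theorem 4.2]{yang:13:cotorsion} and dimension-shifting; (b) for any module $N$ of the form $N = M^+$, $N \in \mathcal{W} \iff N \in \mathcal{GC}$, proved using that $(\mathcal{GF}, \mathcal{GC})$ is a complete hereditary cotorsion pair (\cite[Theorem 2.12]{Enochs-Jenda-Lopez_Ramos}) and that a dual module $M^+$ lies in $\mathcal{GC}$ exactly when $\mathrm{Ext}^1_R(G, M^+) \cong \mathrm{Ext}^1_R(M, G^+)$ vanishes for all Gorenstein flat $G$, i.e. exactly when $M$ is right-orthogonal to all $G^+$ with $G \in \mathcal{GF}$, and $\{G^+ : G \in \mathcal{GF}\}$ is (up to summands) the class of Gorenstein injectives over a Ding-Chen ring, whose right orthogonal is $\mathcal{W}$ again by $\mathcal{W}^\bot = \mathcal{DI}$ and the hereditary property. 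The main obstacle I anticipate is getting the direction $M^+ \in \mathcal{GC} \Rightarrow M \in \mathcal{W}$ right — i.e., going from a vanishing condition on $M^+$ back to finiteness of $\mathrm{fd}(M)$; this is where one must be careful to use that $M \hookrightarrow M^{++}$ is pure, that $\mathcal{W}$ is closed under pure submodules (which follows since $(\mathcal{W},\mathcal{W}^\bot)$ is a cotorsion pair whose left class, being the left class of a cotorsion pair generated in a suitable way, is closed under pure submodules — or directly because finite flat dimension passes to pure submodules over a coherent ring), and that $M^{++} \in \mathcal{W}$ because $M^{++} = (M^+)^+$ and $M^+ \in \mathcal{GC}$ forces $M^+ \in \mathcal{W}$ by step (b), hence $(M^+)^+ \in \mathcal{W}$ by step (a). Assembling these, $M$ is a pure submodule of $M^{++} \in \mathcal{W}$, so $M \in \mathcal{W}$, completing the biconditional and hence the proof that $(\mathcal{W}, \mathcal{GC})$ is a perfect duality pair.
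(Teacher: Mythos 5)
Your plan is genuinely different in shape from the paper's, and it has a real gap at exactly the place you flag as the ``main obstacle.'' Both proofs start the same way: for $K\in\mathcal{W}={}^{\bot}\mathcal{DI}$, the isomorphism $\Ext^1_R(G,K^+)\cong\Ext^1_R(K,G^+)$ together with $G^+\in\mathcal{DI}$ for $G\in\mathcal{GF}$ gives $K^+\in\mathcal{GC}$. The divergence is in the converse direction, $K^+\in\mathcal{GC}\Rightarrow K\in\mathcal{W}$.

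The load-bearing assertion in your step (b) is that $\{G^+ : G\in\mathcal{GF}\}$ is ``(up to summands) the class of Gorenstein injectives over a Ding--Chen ring,'' from which you want ${}^{\bot}\{G^+:G\in\mathcal{GF}\}={}^{\bot}\mathcal{GI}=\mathcal{W}$. That assertion is false: every $G^+$ is pure-injective, and a direct summand of a pure-injective module is pure-injective, whereas the class of Gorenstein injective modules contains plenty of non--pure-injective modules (e.g.\ over a Ding--Chen noetherian ring $\mathcal{GI}$ is closed under arbitrary direct sums, which almost never preserves pure-injectivity). So you only get the inclusion $\{G^+:G\in\mathcal{GF}\}\subseteq\mathcal{GI}$, hence only $\mathcal{W}\subseteq{}^{\bot}\{G^+\}$, i.e.\ the easy direction. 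The inclusion you actually need, ${}^{\bot}\{G^+\}\subseteq\mathcal{W}$, is precisely the content of the proposition restated, and your argument for it is circular: it is true, but not for the reason you give, and without it the assembly in your last paragraph ($M^+\in\mathcal{GC}\Rightarrow M^+\in\mathcal{W}\Rightarrow M^{++}\in\mathcal{W}\Rightarrow M\in\mathcal{W}$) never gets off the ground.

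The paper handles the hard direction by a splitting argument that sidesteps any claim about the cogenerating set of $\mathcal{DI}$: use completeness of $(\mathcal{W},\mathcal{DI})$ to get $0\to A\to B\to K\to 0$ with $A\in\mathcal{DI}$, $B\in\mathcal{W}$; dualize to $0\to K^+\to B^+\to A^+\to 0$ with $K^+,B^+\in\mathcal{GC}$ and $A^+\in\mathcal{GF}$; split it, so $A^+\in\mathcal{GF}\cap\mathcal{GC}$, forcing $A^+$ flat; then $A^{++}$ injective, so $A$ is absolutely pure by Cheatham--Stone, hence $A\in\mathcal{W}$ by Yang--Liu--Liang, and finally $K\in\mathcal{W}$ by extension-closure. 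If you want to keep your double-dual scaffolding, this is the argument you would need to splice in to replace the false identification of $\{G^+\}$ with $\mathcal{GI}$; as written, your step (b) converse has no valid justification.
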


\begin{proof}
We check first that $K \in \mathcal{W} = {}^{\bot}\! \mathcal{DI}$ if and only if $K^+ \in \mathcal{GC}$.

Let $K \in \mathcal{W}$ and  $G \in \mathcal{GF}$. Then $\Ext^1 (G, K^+) \cong \Ext^1(K, G^+) =0$, by a double application of  \cite[Lemma 1.2.11]{Gobel-Trlifaj} and since $G^+$ is a Ding-injective module; this last statement is due to \cite[Lemma 2.8]{ding:08:cotorsion}.

For the converse, let $K$ be an $R$-module such that $K^+$ is a Gorenstein cotorsion module. Since $(\mathcal{W}, \mathcal{DI})$  is a complete cotorsion pair, there is a short exact sequence
\[
\textbf{E} = 0 \rightarrow A \rightarrow B \rightarrow K \rightarrow 0,
\]
with $A \in \mathcal{DI}$ and $B \in \mathcal{W}$. Then we have an exact sequence $0 \rightarrow K^+ \rightarrow B^+ \rightarrow A^+ \rightarrow 0$, with $K^+$ Gorenstein cotorsion, $B^+$  also Goresntein cotorsion (from the first part of the proof) and with $A^+$ Gorenstein flat (this follows from \cite[Proposition 12]{ding:96:coherent}, since any Ding injective module is Gorenstein injective). Since $\Ext^1(\mathcal{GF},\mathcal{GC})=0$, then the previous sequence splits and so $B^+ \cong A^+ \oplus K^+$. Thus $A^+$ is both Gorenstein cotorsion and Gorenstein flat.
Since $A^+ \in \mathcal{GF}$, it fits in a short exact sequence
\[
0 \to A^+ \to M \to N \to 0,
\]
with $M$ a flat module and $N \in \mathcal{GF}$. However, since $A^+ \in \mathcal{GC}$ also, any such short exact sequence splits. It follows that $A^+$ is flat, and so $A^{++}$ is injective. Since $R$ is coherent, then we have that $A$ is absolutely pure (FP-injective). Since the ring is Ding-Chen, then by \cite[Theorem 4.2]{gillespie:10:model}, the module $A$ has finite flat dimension. Therefore, the short exact sequence $\textbf{E}$ has $A, B \in \mathcal{W}$, which gives that $K \in \mathcal{W}$.

% so $A$ is injective. Then the sequence $0 \rightarrow A \rightarrow B \rightarrow K \rightarrow 0$ is split exact, so $B \simeq A \oplus K$and therefore $K$ is in $\mathcal{W} = ^\bot \mathcal{DI}$.\\

Since $\mathcal{W} = {}^{\bot} \mathcal{DI}$, then this class is closed under direct sums, summands, and also $R \in \mathcal{W}$. Therefore  $(\mathcal{W}, \mathcal{GC})$ is a perfect duality pair.
\end{proof}

\begin{corollary}
The class of Ding injective modules is enveloping over any Ding-Chen ring.
\end{corollary}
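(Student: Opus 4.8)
The plan is to deduce the statement directly from Proposition \ref{(W,GC)-dual-pair}. Having established there that $(\mathcal{W}, \mathcal{GC})$ is a perfect duality pair over a Ding-Chen ring $R$, I would invoke part (3) of \cite[Theorem 3.1]{holm:09:duality}: for a perfect duality pair $(\mathcal{M},\mathcal{C})$ the pair $(\mathcal{M}, \mathcal{M}^{\bot})$ is a perfect cotorsion pair. Applying this with $\mathcal{M} = \mathcal{W}$ yields that $(\mathcal{W}, \mathcal{W}^{\bot})$ is a perfect cotorsion pair; by definition of perfectness, the left-hand class $\mathcal{W}$ is then covering and the right-hand class $\mathcal{W}^{\bot}$ is enveloping.

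The second step is merely to identify $\mathcal{W}^{\bot}$. As recalled just before Proposition \ref{(W,GC)-dual-pair}, over a Ding-Chen ring Gillespie \cite{gillespie:10:model} showed that $\mathcal{W}^{\bot} = \mathcal{DI}$, the class of Ding injective modules. Combining this with the previous paragraph, $\mathcal{DI}$ is enveloping, which is exactly the assertion of the corollary.

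There is essentially no obstacle remaining, since all the substantive work — verifying that $K \in \mathcal{W}$ if and only if $K^{+} \in \mathcal{GC}$, and that $\mathcal{W}$ is closed under direct sums and direct summands and contains $R$ — was already carried out in the proof of Proposition \ref{(W,GC)-dual-pair} (using that $\mathcal{W} = {}^{\bot}\mathcal{DI}$ is the left-hand class of a complete hereditary cotorsion pair). The only point requiring a moment's care is to confirm that the hypotheses needed for the ``perfect'' case of \cite[Theorem 3.1]{holm:09:duality}, namely that $\mathcal{M}=\mathcal{W}$ be closed under arbitrary direct sums and extensions with $R \in \mathcal{W}$, are precisely what Proposition \ref{(W,GC)-dual-pair} records; they are, so the corollary follows in two lines.
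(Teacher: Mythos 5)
Your argument is correct and is essentially the same as the paper's: both deduce from Proposition \ref{(W,GC)-dual-pair} together with \cite[Theorem 3.1(3)]{holm:09:duality} that $(\mathcal{W}, \mathcal{W}^{\bot}) = (\mathcal{W}, \mathcal{DI})$ is a perfect cotorsion pair, hence $\mathcal{DI}$ is enveloping. You simply spell out the identification $\mathcal{W}^{\bot}=\mathcal{DI}$ (from \cite{gillespie:10:model}) that the paper uses implicitly.
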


\begin{proof}
Since $(\mathcal{W}, \mathcal{GC})$ is a perfect duality pair, it follows from \cite{holm:09:duality}, that $(\mathcal{W}, \mathcal{DI})$ is a perfect cotorsion pair. So $\mathcal{DI}$ is enveloping.
\end{proof}

It was recently showed that over a Ding-Chen ring the class of Ding injective modules coincides with, $\mathcal{GI}$, the class of Gorenstein injective modules. For completion, we include  here the result along with a proof of this fact.

\begin{proposition} \label{GI=DI}
Let $R$ be a Ding-Chen ring. Then $\mathcal{GI} = \mathcal{DI}$.
\end{proposition}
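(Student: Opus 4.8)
The plan is to prove the equality $\mathcal{GI} = \mathcal{DI}$ by showing the only nontrivial inclusion, namely $\mathcal{GI} \subseteq \mathcal{DI}$; the reverse inclusion $\mathcal{DI} \subseteq \mathcal{GI}$ is already noted in the preliminaries (every Ding injective module is Gorenstein injective). Since both classes are described via complete (indeed hereditary) cotorsion pairs — we have the cotorsion pair $(\mathcal{W}, \mathcal{DI})$ over a Ding-Chen ring, and $\mathcal{GI}$ is the right-hand class of the Gorenstein injective cotorsion pair — it suffices to identify the corresponding left-hand classes. Concretely, I would show that ${}^{\bot}\mathcal{GI} = \mathcal{W}$, i.e.\ that a module has finite flat dimension if and only if it lies in the left orthogonal of the Gorenstein injectives; taking right orthogonals then yields $\mathcal{GI} = \mathcal{W}^{\bot} = \mathcal{DI}$, using the identification $\mathcal{W}^{\bot} = \mathcal{DI}$ recalled from \cite{gillespie:10:model}.

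First I would recall that over a Ding-Chen ring the class $\mathcal{W}$ of modules of finite flat dimension coincides with the class of modules of finite FP-injective (absolutely pure) dimension, by \cite[Theorem 4.2]{yang:13:cotorsion} (stated above). This is the key structural input that makes the Gorenstein injective theory over a Ding-Chen ring behave like the Iwanaga-Gorenstein case. Next, I would invoke the standard fact from Gorenstein homological algebra that for a module $M$, one has $M \in {}^{\bot}\mathcal{GI}$ if and only if $M$ has finite ``Gorenstein injective-projective'' dimension in the appropriate sense; more usefully, over a Ding-Chen ring every module has finite Gorenstein injective dimension, and the modules in ${}^{\bot}\mathcal{GI}$ are exactly those of finite FP-injective dimension, hence exactly $\mathcal{W}$. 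Alternatively, and perhaps more cleanly: since $(\mathcal{W}, \mathcal{DI})$ is a complete hereditary cotorsion pair with $\mathcal{DI} \subseteq \mathcal{GI}$, every $M \in \mathcal{GI}$ fits in a short exact sequence $0 \to M \to D \to W \to 0$ with $D \in \mathcal{DI}$ and $W \in \mathcal{W}$ — wait, that has the wrong shape; rather the completeness gives $0 \to W' \to D \to M \to 0$ is not available either since $M$ need not be in $\mathcal{W}^{\bot}$. I would instead take the $\mathcal{W}$-cover (equivalently use that $(\mathcal{W},\mathcal{DI})$ is complete) to write $0 \to K \to W \to M \to 0$ with $W \in \mathcal{W}$ and $K \in \mathcal{DI}$; then since $M \in \mathcal{GI}$ and $\mathcal{GI}$ is closed under cokernels of monomorphisms between Gorenstein injectives (and $\mathcal{DI}\subseteq\mathcal{GI}$ with $K\in\mathcal{DI}$), and $\mathcal{GI}$ is resolving/coresolving appropriately, one deduces $W \in \mathcal{GI} \cap \mathcal{W}$; but over a Ding-Chen ring $\mathcal{GI} \cap \mathcal{W}$ consists of injective modules (a module of finite flat = finite FP-injective dimension that is Gorenstein injective is injective), so $W$ is injective, whence $M \in \mathcal{DI}$ by lifting the defining exact complex.

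The main obstacle I anticipate is the step identifying $\mathcal{GI} \cap \mathcal{W}$ with the injective modules over a Ding-Chen ring, and more generally controlling the interaction between the Gorenstein injective dimension and the FP-injective dimension without the noetherian hypothesis. In the Iwanaga-Gorenstein setting this is classical, but over a merely coherent (Ding-Chen) ring one must argue carefully: one uses that $\mathcal{GI}$ is injectively coresolving and closed under direct summands, that $(\mathcal{W},\mathcal{DI})$ is hereditary so $\mathcal{W}$ is closed under kernels of epimorphisms among its members, and the Cheatham–Stone characterization of coherence together with \cite[Theorem 4.2]{yang:13:cotorsion} to pass between flat dimension and FP-injective dimension via Pontryagin duals. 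I would keep the argument short by reducing everything to: (i) $\mathcal{DI} \subseteq \mathcal{GI}$, (ii) completeness of $(\mathcal{W},\mathcal{DI})$, (iii) $\mathcal{GI}\cap\mathcal{W} = \{\text{injectives}\}$, and (iv) a Gorenstein injective module that is an extension-closure iterate built only from injectives and a module in $\mathcal{DI}$ is itself in $\mathcal{DI}$ — i.e.\ that $\mathcal{DI}$ is closed under cokernels of monomorphisms into Gorenstein injectives, which follows from the definition of $\mathcal{DI}$ via complete injective resolutions that stay $\Hom_R(A,-)$-exact for FP-injective $A$.
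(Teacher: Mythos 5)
Your overall plan---reduce to the inclusion $\mathcal{GI}\subseteq\mathcal{DI}$ and exploit the identification $\mathcal{DI}=\mathcal{W}^{\bot}$ over a Ding--Chen ring---is the right frame, and it is the frame the paper uses. But the paper closes it in one stroke: by \cite[Corollary 5.9]{Sto14}, any Gorenstein injective module $D$ already satisfies $\Ext^1_R(F,D)=0$ for every module $F$ of finite flat dimension, i.e.\ $D\in\mathcal{W}^{\bot}=\mathcal{DI}$. That single orthogonality input is the whole proof; there is no pushout, no Pontryagin dual, no appeal to $\mathcal{GI}\cap\mathcal{W}$.

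Your alternative route has a genuine circularity precisely at the step you flag as the main obstacle, namely that $\mathcal{GI}\cap\mathcal{W}$ consists of injective modules. To show $W\in\mathcal{GI}\cap\mathcal{W}$ is injective you would take $0\to W'\to I\to W\to 0$ with $I$ injective and $W'\in\mathcal{GI}$, and try to split it, i.e.\ show $\Ext^1_R(W,W')=0$. Since $W\in\mathcal{W}$, what you need is exactly $W'\in\mathcal{W}^{\bot}=\mathcal{DI}$---the very inclusion $\mathcal{GI}\subseteq\mathcal{DI}$ that is being proved. The tools you list (coresolving property of $\mathcal{GI}$, heredity of $(\mathcal{W},\mathcal{DI})$, Cheatham--Stone, passing between flat and FP-injective dimension via $(-)^+$) do not break the loop: the Pontryagin-dual variant would need ``Gorenstein flat of finite \emph{injective} dimension implies flat,'' which is not the standard ``Gorenstein flat of finite flat dimension implies flat'' and is not available here. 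The missing ingredient is precisely the \v{S}\v{t}ov\'{\i}\v{c}ek orthogonality $\mathcal{W}\subseteq{}^{\bot}\mathcal{GI}$. Once you grant it, your \emph{first} plan already finishes cleanly and matches the paper: $\mathcal{GI}\subseteq({}^{\bot}\mathcal{GI})^{\bot}\subseteq\mathcal{W}^{\bot}=\mathcal{DI}$, and you never need the intermediate claim about $\mathcal{GI}\cap\mathcal{W}$ at all.
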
%
\begin{proof}
Since any Ding injective module is Gorenstein injective, we show the other inclusion. Let $D$ be a Gorenstein injective module.  Then $D = Z_0(\textbf{E})$, for some exact sequence of injective modules, $\textbf{E} = \cdots \to E_1 \to E_0 \to E_{-1} \to \cdots$, such that $\Hom(F,\textbf{E})$ is exact for all injective modules $F$. By \cite[Corollary 5.9]{Sto14}, we have $\Ext^1(F,D)=0$ for all modules $F$ of finite flat dimension, that is, for all $F\in \mathcal{W}$. Since $R$ is a Ding-Chen ring, we have $\mathcal{W}^{\bot}=\mathcal{DI}$. Therefore $D$ is Ding injective.
\end{proof}%

So we get the following  result.

\begin{proposition}
The class of Gorenstein injective modules is enveloping over any Ding-Chen ring.
\end{proposition}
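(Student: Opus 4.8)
The plan is straightforward: combine the two results immediately preceding this statement. By the Corollary above, the class $\mathcal{DI}$ of Ding injective modules is enveloping over any Ding-Chen ring $R$ (this is itself deduced from Proposition \ref{(W,GC)-dual-pair}, which shows $(\mathcal{W}, \mathcal{GC})$ is a perfect duality pair, hence by \cite{holm:09:duality} the cotorsion pair $(\mathcal{W}, \mathcal{DI})$ is perfect, so its right-hand class is enveloping). By Proposition \ref{GI=DI}, over a Ding-Chen ring we have the equality of classes $\mathcal{GI} = \mathcal{DI}$. So the class $\mathcal{GI}$ of Gorenstein injective modules, being literally equal to the class $\mathcal{DI}$, inherits the enveloping property. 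That is the entire argument.

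More explicitly, the proof I would write is: ``By Proposition \ref{GI=DI}, the class of Gorenstein injective modules coincides with the class $\mathcal{DI}$ of Ding injective modules over a Ding-Chen ring. By the corollary above, $\mathcal{DI}$ is enveloping over any Ding-Chen ring. Hence $\mathcal{GI}$ is enveloping.'' One could alternatively bypass Proposition \ref{GI=DI} and instead invoke the cotorsion pair $(\mathcal{GF}, \mathcal{GC})$ together with the identification $\mathcal{W}^\bot = \mathcal{DI}$ and $\mathcal{GC}$, but there is no reason to do so when the equality of classes is already in hand.

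There is essentially no obstacle here; the statement is a one-line corollary of the two results just established. The only thing to be mildly careful about is making sure we are quoting the right objects: ``enveloping'' (not merely ``preenveloping'') is what the corollary gives us, because the duality pair $(\mathcal{W},\mathcal{GC})$ is \emph{perfect}, which upgrades the conclusion of \cite[Theorem 3.1]{holm:09:duality} to a perfect cotorsion pair and hence to an actual enveloping class. So the final statement correctly inherits ``enveloping,'' and no extra work is needed.
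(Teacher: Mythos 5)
Your proof is correct and matches the paper's (the paper gives no explicit proof, simply stating ``So we get the following result'' after Proposition \ref{GI=DI} and the corollary that $\mathcal{DI}$ is enveloping over Ding-Chen rings). You correctly combine the equality $\mathcal{GI}=\mathcal{DI}$ with the enveloping property of $\mathcal{DI}$, exactly as intended.
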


We also consider the existence of the Gorenstein injective (pre)covers over Ding-Chen rings. We will use the following lemma.

%We recall ([Holm-Jorgensen, Covers and purity, def. 1.1] that a class $\mathcal{F}$ is called a \emph{cardinality class} when, for each $R$ left-module $M$, there is a cardinal number $ \textit{f}$ that each homomorphism
%$F \rightarrow M$ with $F \in \mathcal{F}$ can be factored as $F \rightarrow F' \rightarrow M$ with $F' \in \mathcal{F}$ satisfying $|F'| < \textit{f}$.\\
%We also recall the following:\\

%\begin{proposition}([H-J], Proposition 2.3)
%If a class $\mathcal{F}$ is closed under pure quotient modules, then
%it is a cardinality class.
%\end{proposition}

%\begin{proposition} ([H-J], Proposition 1.2)
%Let $\mathcal{F}$ be a class which is closed under direct summands.
%Then $\mathcal{F}$ is precovering if and only if it is a cardinality class which is closed
%under set indexed direct sums.
%\end{proposition}

\begin{lemma} \label{R-is-Noeth}
Let $R$ be a Ding-Chen ring. If the class of Gorenstein injective $R$-modules is precovering, then $R$ is left noetherian.
\end{lemma}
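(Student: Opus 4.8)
The plan is to exploit the hypothesis that Gorenstein injective precovers exist in order to produce, for an arbitrary direct sum of injective modules, a Gorenstein injective precover, and then to extract from this the conclusion that arbitrary direct sums of injectives are injective — which is one of the classical characterizations of left noetherian rings (Bass, Matlis, Papp). Concretely, let $\{E_i\}_{i\in I}$ be a family of injective left $R$-modules and set $E = \bigoplus_{i\in I} E_i$. Since $R$ is Ding-Chen, by Proposition \ref{GI=DI} the class of Gorenstein injective modules equals $\mathcal{DI} = \mathcal{W}^\bot$, and this is the right-hand class of the complete hereditary cotorsion pair $(\mathcal{W},\mathcal{DI})$. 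In particular injective modules are Gorenstein injective, so each $E_i$ — and hence any finite subsum — is Gorenstein injective; the question is whether the (possibly infinite) sum $E$ behaves well.

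The key step is to use the precover $\varphi\colon G \to E$ with $G$ Gorenstein injective. First I would observe that, because $(\mathcal{W},\mathcal{DI})$ is a cotorsion pair with $\mathcal{W}$ containing $R$ (indeed all projectives) and with $\mathcal{DI}$ the class of Gorenstein injectives, every Gorenstein injective precover can be taken to be surjective: splicing the precover with a surjection from a free (hence $\mathcal{W}$) module onto $E$ and using $\Ext^1(\mathcal{W},\mathcal{DI}) = 0$ gives a surjective precover $G \twoheadrightarrow E$ with $G$ Gorenstein injective and kernel $K$. From exactness of $0 \to K \to G \to E \to 0$ and the hereditary cotorsion pair, $K$ lies in... well, not automatically; so instead I would argue as follows. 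Because $G$ is Gorenstein injective over the Ding-Chen ring, $G \in \mathcal{W}^\bot$, so $\Ext^1(W,G) = 0$ for every $W$ of finite flat dimension; in particular $\Ext^1(W,E) = 0$ would follow if $E$ were a direct summand of $G$. To get that, apply the precover property to the identity map: there is $E \xrightarrow{\ ?\ } G$? No — precovers only give maps \emph{into} the covered object. The correct route is: the precover $\varphi$ together with the fact that $E$ itself is... not Gorenstein injective a priori.

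So the actual argument I would run is the standard one for Gorenstein injective precovers forcing noetherianity: take the countable direct sum $E = \bigoplus_{n\geq 1} E_n$ of injectives and a surjective Gorenstein injective precover $\varphi\colon G \to E$ with kernel $L$. Since each partial inclusion $E_1 \oplus \cdots \oplus E_n \hookrightarrow E$ factors (as a map from a Gorenstein injective, using that finite sums of injectives are Gorenstein injective, and that $\varphi$ is a precover) through $\varphi$, one builds compatible lifts and passes to the colimit; combined with $\Ext^1(\mathcal{W}, G) = 0$ and a dimension-shifting/telescope argument, this shows $\Ext^1(\mathcal{W}, E) = 0$, i.e. $E \in \mathcal{W}^\bot = \mathcal{DI}$. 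Then $E$, being simultaneously a direct sum of injectives and Gorenstein injective, is seen to be injective: a Gorenstein injective module that has finite injective dimension is injective (from the defining complete resolution of injectives together with $\mathcal{W}^\bot = \mathcal{DI}$ and the hereditary property), and one checks $\mathrm{id}(E) < \infty$ using that over a Ding-Chen ring $\mathcal{W}$-dimension is bounded. Since arbitrary direct sums of injectives are then injective, $R$ is left noetherian.

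The main obstacle is making the lifting/telescope step rigorous: one must genuinely produce, from the precover property applied to the finite partial sums, a single map or $\Ext$-vanishing statement about the \emph{infinite} sum $E$, and this requires care with the direct-limit argument (the precover is not unique, so the lifts need not be compatible on the nose; one typically fixes this with an Eilenberg-swindle-type or a $\varprojlim^1$ argument, or by instead applying the precover to $\bigoplus_n (E_1\oplus\cdots\oplus E_n)$ and comparing with $\bigoplus_n E_n$ via the classical telescope short exact sequence $0 \to \bigoplus T_n \to \bigoplus T_n \to \bigoplus E_n \to 0$). I would expect the cleanest writeup to avoid explicit lifts entirely: use the telescope sequence together with $\Ext^1(\mathcal{W},-)$ applied to a Gorenstein injective precover of $\bigoplus E_n$ to deduce $\Ext^1(W,\bigoplus E_n)=0$ for all $W\in\mathcal{W}$, hence $\bigoplus E_n\in\mathcal{DI}$, and then finish with the injective-dimension observation above and Bass's theorem.
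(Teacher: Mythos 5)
Your overall plan coincides with the paper's: show that the precovering hypothesis forces the class of Gorenstein injectives to be closed under arbitrary direct sums, then deduce noetherianity. The paper does this in two lines by citing \cite[Proposition~1.2]{holm:08:purity} (a precovering class closed under direct summands is closed under coproducts) and then \cite[Corollary~3.5]{gillespie:10:model} (over a Ding-Chen ring, closure of $\mathcal{W}^{\bot}=\mathcal{DI}=\mathcal{GI}$ under direct sums forces $R$ to be noetherian). You are reconstructing both of these from scratch, and you get stuck exactly where you shouldn't: the lifting step is in fact trivial and needs no telescope, no $\varprojlim^{1}$, and no compatibility of lifts. Given $\{G_i\}\subseteq\mathcal{GI}$ and a $\mathcal{GI}$-precover $\varphi\colon G\to\bigoplus_i G_i$, each coproduct injection $\iota_i\colon G_i\to\bigoplus G_i$ lifts through $\varphi$ to some $\psi_i\colon G_i\to G$ because $G_i\in\mathcal{GI}$; the $\psi_i$ need not agree on overlaps because there are no overlaps — the universal property of the coproduct assembles them into a single map $\psi\colon\bigoplus G_i\to G$ with $\varphi\psi=\mathrm{id}$, so $\bigoplus G_i$ is a direct summand of $G$ and hence Gorenstein injective. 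Your worry about ``the lifts need not be compatible on the nose'' and the proposed Eilenberg-swindle or $\varprojlim^{1}$ repair are misdirected.

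The second gap is in your final step. You want to conclude that $E=\bigoplus E_i$ is injective from the facts that $E\in\mathcal{GI}$ and $\mathrm{id}_R(E)<\infty$, and you justify finiteness of $\mathrm{id}_R(E)$ by appealing to the Ding-Chen bound. But the bound that is available (Yang, \cite[Theorem~4.2]{yang:13:cotorsion}, quoted in the paper) is an equivalence between finite \emph{flat} dimension and finite \emph{FP-injective} dimension; it does not say that a direct sum of injectives has finite \emph{injective} dimension. What you do know is that $E$ is FP-injective (FP-injective modules are closed under direct limits, hence under direct sums), so $E\in\mathcal{W}$ by Yang's theorem, and then $E\in\mathcal{W}\cap\mathcal{W}^{\bot}$; one still has to argue that this forces $E$ to be injective. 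That is essentially the content of \cite[Corollary~3.5]{gillespie:10:model}, which the paper simply cites. So your route is sound in spirit, but to be complete you would need either to cite that corollary as the paper does, or to supply the missing argument that a Gorenstein injective, FP-injective module over a Ding-Chen ring is injective; the appeal to ``finite injective dimension'' as written is not justified.
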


\begin{proof}
Since the class of Gorenstein injectives is precovering, it is closed under arbitrary direct sums by \cite[Proposition 1.2]{holm:08:purity}. From \cite[Corollary 3.5]{gillespie:10:model}, we get that $R$ is noetherian.
\end{proof}

Now we can prove the following.

\begin{proposition}
Let $R$ be a Ding-Chen ring. Then $R$ is a Gorenstein ring if and only if both the class of Gorenstein injective left $R$-modules and the class of Gorenstein injective right $R$-modules are precovering.
\end{proposition}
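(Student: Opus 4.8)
The plan is to handle the two implications separately. The ``if'' direction is where the new content lies and rests on Lemma~\ref{R-is-Noeth}; the ``only if'' direction relies on a classical fact about Gorenstein rings.

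For ``only if'', suppose $R$ is a Gorenstein ring. Then $R$ is in particular a Ding-Chen ring, so by Proposition~\ref{GI=DI} the class $\mathcal{GI}$ of Gorenstein injective modules equals the class $\mathcal{DI}$ of Ding injective modules, and over an Iwanaga-Gorenstein ring it is classical that $\mathcal{GI}$ is covering --- hence precovering --- on both sides; I would invoke this directly, e.g.\ \cite{E-J-Vol-1}. A self-contained alternative in the duality-pair spirit of this section is also available: over the two-sided noetherian ring $R$ one checks that a left $R$-module $M$ lies in $\mathcal{GI}$ if and only if $M^{+}$ is Gorenstein flat (one inclusion from \cite[Proposition 12]{ding:96:coherent} together with $\mathcal{GI}=\mathcal{DI}$, the other from \cite[Lemma 2.8]{ding:08:cotorsion} and closure of $\mathcal{GI}$ under pure submodules over such rings, which in turn follows from $\mathcal{GI}=\mathcal{W}^{\bot}$); then $(\mathcal{GI},\mathcal{GF})$ is a duality pair which is coproduct closed, because $\mathcal{GF}$ is closed under summands and finite sums while $\mathcal{GI}$ is closed under arbitrary direct sums over a noetherian ring, and \cite[Theorem 3.1]{holm:09:duality} gives that $\mathcal{GI}$ is covering, symmetrically on the right.

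For ``if'', assume that both the class of Gorenstein injective left $R$-modules and the class of Gorenstein injective right $R$-modules are precovering. Applying Lemma~\ref{R-is-Noeth} to $R$ shows that $R$ is left noetherian, and applying it to $R^{\mathrm{op}}$ --- which is again a Ding-Chen ring, since the $n$-FC condition is symmetric in left and right --- shows that $R$ is right noetherian. Since $R$ is Ding-Chen it is $n$-FC for some $n\ge 0$, so $\FPid{_RR}\le n$ and $\FPid{R_R}\le n$; but over a noetherian ring an FP-injective (absolutely pure) module is injective, so FP-injective dimension agrees with injective dimension, whence $\mathrm{id}({}_{R}R)\le n$ and $\mathrm{id}(R_{R})\le n$ are finite. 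Thus $R$ is left and right noetherian of finite self-injective dimension on both sides, i.e.\ $R$ is a Gorenstein ring. In this direction nothing is deep once Lemma~\ref{R-is-Noeth} is available; the only points requiring a word of care are the left-right symmetry of the $n$-FC condition and the standard identification of FP-injective dimension with injective dimension over a noetherian ring. The genuinely external ingredient of the proof is therefore confined to the ``only if'' direction: the existence of Gorenstein injective precovers over an arbitrary Iwanaga-Gorenstein ring, for which a citation is the cleanest route.
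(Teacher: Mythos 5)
Your proof follows the same route as the paper: for the ``if'' direction you apply Lemma~\ref{R-is-Noeth} on each side to get that $R$ is two-sided noetherian and then use the $n$-FC condition (identifying FP-injective dimension with injective dimension over a noetherian ring) to conclude $R$ is Gorenstein, and for the ``only if'' direction you invoke \cite[Theorem 11.1.1]{E-J-Vol-1}, exactly as the paper does --- your write-up merely spells out the steps the paper leaves implicit. One caveat about your parenthetical ``self-contained alternative'' for the ``only if'' direction: the claim that closure of $\mathcal{GI}$ under pure submodules ``follows from $\mathcal{GI}=\mathcal{W}^{\bot}$'' is not justified, since being the right half of a cotorsion pair does not by itself give closure under pure submodules; indeed in Theorem~\ref{theorem.Ding duality pair} that closure is shown to be \emph{equivalent} to $\mathcal{GI}$ being covering, so using it here to derive precovering would be circular. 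Sticking with the direct citation to \cite{E-J-Vol-1}, as both you and the paper primarily do, is the right move.
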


\begin{proof}
If $R$ is a Gorenstein ring, then \cite[Theorem 11.1.1]{E-J-Vol-1} gives that the class of Gorenstein injective modules is precovering.

For the converse, we get from Lemma \ref{R-is-Noeth}, that $R$ is a two sided noetherian ring. Since $R$ is also a Ding-Chen ring, then it has finite self injective dimension on both sides, and so it is a Gorenstein ring.
\end{proof}

A sufficient condition for the existence of the Gorenstein injective covers is the following.

\begin{proposition} \label{prop-GI-covering}
Let $R$ be any ring. If the class of Gorenstein injectives is closed under pure submodules, then it is covering.
\end{proposition}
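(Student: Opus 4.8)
The plan is to deduce covering from a general principle: a class that is closed under pure submodules, arbitrary direct sums, and direct limits, and which is the right-hand (or left-hand) side of a complete cotorsion pair, tends to be covering. More precisely, I would first recall that over any ring the pair $({}^{\bot}\mathcal{GI},\mathcal{GI})$ is a complete cotorsion pair; this is a standard fact about Gorenstein injective modules (valid over arbitrary rings by work of \v{S}\v{t}ov\'{i}\v{c}ek, or see the references already cited in the paper). The class $\mathcal{GI}$ is always closed under arbitrary direct products and under extensions; the new hypothesis is that it is also closed under pure submodules. Since a direct limit of modules in $\mathcal{GI}$ is a pure quotient of the direct sum (the kernel of the canonical surjection $\bigoplus M_i \twoheadrightarrow \varinjlim M_i$ is pure), and $\mathcal{GI}$ is closed under pure submodules, one gets that $\mathcal{GI}$ is closed under pure quotients as well — but what we actually need is closure under direct sums and under direct limits.

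The cleanest route is via duality pairs, mirroring Proposition \ref{(W,GC)-dual-pair} and the earlier $\FP{n}$-injective arguments. I would show that $(\mathcal{GI},\mathcal{GC}')$ is a duality pair for a suitable companion class $\mathcal{GC}'$ of right modules — the natural candidate being $\mathcal{GC}' = \{C : C \text{ is a pure quotient of a Gorenstein flat right module}\}$ or, better, using that $M \in \mathcal{GI}$ should be detected by $M^+$ being Gorenstein flat. The key input is the implication: if $M^+$ is Gorenstein flat then $M$ is Gorenstein injective. Under the hypothesis that $\mathcal{GI}$ is closed under pure submodules, one runs the now-familiar argument: pick a short exact sequence $0 \to A \to B \to M \to 0$ with $A \in \mathcal{GI}$ and $B \in {}^{\bot}\mathcal{GI}$ from the complete cotorsion pair; dualize to $0 \to M^+ \to B^+ \to A^+ \to 0$; show this splits because $A^+$ is Gorenstein flat and $B^+$ lies in the right orthogonal of Gorenstein flats (using that $B \in {}^{\bot}\mathcal{GI}$ dualizes appropriately); conclude the original sequence is pure exact, hence $M$, being a pure submodule... wait — $M$ is a pure \emph{quotient}; so instead use that $\mathcal{GI}$ is closed under pure quotients, which follows from closure under pure submodules together with the direct-sum/direct-limit closure forced by the setup. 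Once $(\mathcal{GI},\mathcal{GC}')$ is a coproduct-closed duality pair, \cite[Theorem 3.1]{holm:09:duality} gives that $\mathcal{GI}$ is covering.

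The main obstacle is establishing that $\mathcal{GI}$ is closed under arbitrary direct sums under the stated hypothesis, and more delicately the Pontryagin-dual characterization $M \in \mathcal{GI} \iff M^+ \in \mathcal{GF}$ (or a one-directional version sufficient to form a duality pair), since over a non-coherent ring the behavior of totally acyclic complexes of injectives under the functor $(-)^+$ is subtle — an exact complex of injectives need not dualize to an exact complex of flats with the right $\Hom/\otimes$-exactness against the relevant test classes. I expect to need Lemma \ref{R-is-Noeth}-style reasoning is \emph{not} available here (we are over an arbitrary ring), so I would instead lean on the hypothesis itself: closure under pure submodules already forces $\mathcal{GI}$ to be closed under direct sums (by a pure-purity argument on the product-to-sum inclusion, using that $\mathcal{GI}$ is closed under products), after which Holm–J\o rgensen purity machinery (\cite{holm:08:purity}) applies directly to yield the covering property without explicitly constructing the companion class — this is likely the shortest correct path, and the place where care is most needed.
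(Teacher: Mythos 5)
Your final paragraph lands on the right framework --- closure under pure submodules plus closure under products gives closure under direct sums (since $\bigoplus M_i$ is a pure submodule of $\prod M_i$), and then one wants to invoke the Holm--J{\o}rgensen purity machinery from \cite{holm:08:purity}. This is indeed the route the paper takes; the long detour through duality pairs, Pontryagin duals, and the $(\mathcal{GI},\mathcal{GF})$ pair is unnecessary here and, as you yourself note, problematic over a non-coherent ring.

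However, there is a genuine gap. The result you need from \cite{holm:08:purity} (Theorem 2.5) requires the class to be closed under \emph{pure quotients}, not just direct sums. Your attempt to get pure-quotient closure is wrong in two places: first you assert that because the kernel of $\bigoplus M_i \twoheadrightarrow \varinjlim M_i$ is pure and $\mathcal{GI}$ is closed under pure submodules, ``one gets that $\mathcal{GI}$ is closed under pure quotients'' --- this simply does not follow; pure-submodule closure says nothing about quotients by itself. Later you claim pure-quotient closure ``follows from closure under pure submodules together with the direct-sum/direct-limit closure forced by the setup,'' which is circular and not a proof. The missing ingredient is the fact, valid over an arbitrary ring, that $\mathcal{GI}$ is injectively resolving --- in particular closed under cokernels of monomorphisms between Gorenstein injective modules (\cite[Theorem 10.1.4]{E-J-Vol-1}, or Holm's Theorem 2.6). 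Given a pure exact sequence $0 \to A \to B \to C \to 0$ with $B \in \mathcal{GI}$, the hypothesis gives $A \in \mathcal{GI}$, and then cokernel-of-mono closure gives $C \in \mathcal{GI}$; that is the step your proposal never supplies. Once both pure-quotient closure and direct-sum closure are in hand, \cite[Theorem 2.5]{holm:08:purity} gives covering directly, with no cotorsion pair or duality pair needed.
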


\begin{proof}
Let $0 \rightarrow A \rightarrow B \rightarrow C \rightarrow 0$ be a pure exact sequence with $B$ Gorenstein injective. Since this class is closed under pure submodules, it follows that $A$ is also Gorenstein injective. But the class of Gorenstein injectives is closed under cokernels of monomorphisms \cite[Theorem 10.1.4]{E-J-Vol-1}, so $C$ is also Gorenstein injective. Thus the class of Gorenstein injectives, $\mathcal{GI}$, is also closed under pure quotients in this case.

Next, since any direct sum is a pure submodule of a direct product and that $\mathcal{GI}$ is closed under direct products, then it follows that it is also closed under direct sums. Thus from \cite[Theorem 2.5]{holm:08:purity}, we see that $\mathcal{GI}$ is covering.
\end{proof}

Next, we prove that,  when $R$ is a Ding-Chen ring, the class $\mathcal{GI}$, of Gorenstein injectives modules, is covering if and only if it is closed under pure submodules. More precisely we have:

\begin{theorem}\label{theorem.Ding duality pair}
Let $R$ be a Ding-Chen ring. The following statements are equivalent:
\begin{enumerate}[leftmargin=*]
\item The class $\mathcal{GI}$ is closed under pure submodules. \label{GI-pure}

\item The class $\mathcal{GI}$  is covering. \label{GI-covering}

\item The pair $(\mathcal{GI}, \mathcal{GF})$ is a duality pair. \label{GI-GF-duality-pair}

\end{enumerate}
\end{theorem}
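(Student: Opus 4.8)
The plan is to establish the cyclic implications $\eqref{GI-pure} \Rightarrow \eqref{GI-covering} \Rightarrow \eqref{GI-GF-duality-pair} \Rightarrow \eqref{GI-pure}$, using the earlier results as much as possible. The first implication $\eqref{GI-pure} \Rightarrow \eqref{GI-covering}$ is already Proposition \ref{prop-GI-covering}, applied here to the Ding-Chen ring $R$, so nothing new is needed there; one just cites it. For $\eqref{GI-covering} \Rightarrow \eqref{GI-GF-duality-pair}$, I would argue as follows. By Proposition \ref{GI=DI} we have $\mathcal{GI} = \mathcal{DI}$, and by \cite[Theorem 4.2]{gillespie:10:model} (together with the discussion preceding Proposition \ref{(W,GC)-dual-pair}) the pair $(\mathcal{W}, \mathcal{DI})$ is a complete cotorsion pair. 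If $\mathcal{GI}$ is covering then, by \cite[Proposition 1.2]{holm:08:purity}, it is closed under arbitrary direct sums; combined with the fact that it is closed under cokernels of monomorphisms \cite[Theorem 10.1.4]{E-J-Vol-1} and under pure submodules — which one should derive, since a covering class closed under direct sums that is also the right-hand side of a complete cotorsion pair is closed under pure submodules (being the covers are surjective and the class is closed under pure quotients/extensions, one deduces closure under pure subobjects via the standard argument) — the duality-pair condition $M \in \mathcal{GI} \iff M^+ \in \mathcal{GF}$ can be checked. Actually the cleaner route: closure under direct sums makes $\mathcal{GI}$ the right half of a complete cotorsion pair cogenerated by a set with extra closure, and then the argument of Proposition \ref{(W,GC)-dual-pair} can be dualized — using the Cheatham–Stone theorem and that $R$ is coherent — to show $M^+ \in \mathcal{GF}$ whenever $M \in \mathcal{GI}$, and conversely.

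For the key remaining implication $\eqref{GI-GF-duality-pair} \Rightarrow \eqref{GI-pure}$, I would invoke \cite[Theorem 3.1]{holm:09:duality}: any duality pair $(\mathcal{M}, \mathcal{C})$ has $\mathcal{M}$ closed under pure submodules. So if $(\mathcal{GI}, \mathcal{GF})$ is a duality pair, then $\mathcal{GI}$ is automatically closed under pure submodules, which is precisely \eqref{GI-pure}. This is the short and decisive step once the duality pair is in hand. It is worth remarking that in a duality pair the second class $\mathcal{C}$ must also be closed under direct summands and finite direct sums, so part of verifying \eqref{GI-GF-duality-pair} is observing that $\mathcal{GF}$ has these closure properties — which holds over the coherent ring $R$ by \cite[Theorem 2.12]{Enochs-Jenda-Lopez_Ramos} (as $\mathcal{GF}$ is the left half of a complete hereditary cotorsion pair, hence closed under summands and direct sums).

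The main obstacle will be the implication $\eqref{GI-covering} \Rightarrow \eqref{GI-GF-duality-pair}$, specifically checking the biconditional $M \in \mathcal{GI} \iff M^+ \in \mathcal{GF}$. One direction — $M \in \mathcal{GI} \Rightarrow M^+ \in \mathcal{GF}$ — should follow over any coherent ring from $\mathcal{GI} = \mathcal{DI}$ and a Pontryagin-duality computation (the dual of a Ding injective complex of injectives is an exact complex of flat modules remaining exact under $-\otimes I$ for injective $I$, using coherence so that duals of injectives are flat, via Cheatham–Stone); this does not even need the covering hypothesis. The genuinely delicate direction is the converse, $M^+ \in \mathcal{GF} \Rightarrow M \in \mathcal{GI}$: here I expect to mimic the structure of the proof of Proposition \ref{(W,GC)-dual-pair}, taking a special $\mathcal{DI}$-precover (or $\mathcal{W}$-cover) sequence $0 \to A \to B \to M \to 0$ coming from the complete cotorsion pair $(\mathcal{W},\mathcal{DI})$, dualizing to get a split sequence, and then extracting that $M$ itself lies in $\mathcal{DI}$; the covering hypothesis \eqref{GI-covering} is what guarantees closure of $\mathcal{GI}$ under direct sums and hence that one is genuinely working with a perfect-enough cotorsion pair for the splitting/extraction to go through. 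I would double-check whether \eqref{GI-covering} is actually needed for this direction or whether the equivalence collapses because $(\mathcal{W},\mathcal{DI})$ is already known to be a perfect duality pair by Proposition \ref{(W,GC)-dual-pair}; if the latter, the proof simplifies considerably and the role of \eqref{GI-covering} is only to feed Proposition \ref{prop-GI-covering} in reverse.
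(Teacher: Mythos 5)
Your cyclic skeleton $\eqref{GI-pure}\Rightarrow\eqref{GI-covering}\Rightarrow\eqref{GI-GF-duality-pair}\Rightarrow\eqref{GI-pure}$ matches the paper, and the first and last implications are handled exactly as the paper does (cite Proposition \ref{prop-GI-covering}, then use that any duality pair is closed under pure submodules by \cite[Theorem 3.1]{holm:09:duality}). But the middle implication, $\eqref{GI-covering}\Rightarrow\eqref{GI-GF-duality-pair}$, contains the real content, and there your sketch has genuine gaps.

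First, you hedge on which exact sequence to use: you propose ``a special $\mathcal{DI}$-precover (or $\mathcal{W}$-cover) sequence $0\to A\to B\to M\to 0$'', i.e., $B\in\mathcal{W}$ and $A\in\mathcal{DI}$. That is the wrong direction; after dualizing you would get $0\to M^+\to B^+\to A^+\to 0$ with $M^+\in\mathcal{GF}$ and $B^+\in\mathcal{GC}$, but the extension class that would make this split is $\Ext^1(A^+,M^+)$, which is not controlled. What the paper takes is a special $\mathcal{GI}$-preenvelope of $K$: $0\to K\to G\to L\to 0$ with $G\in\mathcal{GI}$ and $L\in{}^\perp\mathcal{GI}=\mathcal{W}$. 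Dualizing yields $0\to L^+\to G^+\to K^+\to 0$ with $G^+\in\mathcal{GF}$, $L^+\in\mathcal{GC}$, and $K^+\in\mathcal{GF}$ by hypothesis, so the obstruction $\Ext^1(K^+,L^+)$ does vanish and the sequence splits, giving $L^+\in\mathcal{GF}\cap\mathcal{GC}$, hence $L^+$ flat and $L$ injective.

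Second, and more seriously, you stop there as if this already puts $K$ in $\mathcal{GI}$. It does not: the sequence $0\to K\to G\to L\to 0$ with $G$ Gorenstein injective and $L$ injective only shows $\mathrm{Gid}_R K<\infty$. The paper then needs a second pass: by \cite[Lemma 2.18]{holm:05:gor.dim} there is $0\to B\to H\to K\to 0$ with $B\in\mathcal{GI}$ and $\mathrm{id}_R H=\mathrm{Gid}_R K<\infty$; dualizing shows $H^+\in\mathcal{GF}$ with finite flat dimension, hence $H^+$ is flat and $H$ is injective, forcing $\mathrm{Gid}_R K=0$. Your sketch is missing this entire reduction from finite Gorenstein injective dimension to dimension zero. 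Relatedly, the paper first applies Lemma \ref{R-is-Noeth} to deduce $R$ is left noetherian — this is what licenses the Holm-style Gorenstein injective dimension machinery — and you never make that deduction, even though you cite the ingredients (\cite[Proposition 1.2]{holm:08:purity}).

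Two smaller issues: your aside claiming that a covering class closed under direct sums which is the right half of a complete cotorsion pair is automatically closed under pure submodules is not a standard fact and is not needed; it would amount to $\eqref{GI-covering}\Rightarrow\eqref{GI-pure}$ directly, which is not how either proof proceeds. And your closing musing that the equivalence might ``collapse'' because $(\mathcal{W},\mathcal{GC})$ is already a perfect duality pair (Proposition \ref{(W,GC)-dual-pair}) conflates two different duality pairs: that proposition concerns $\mathcal{W}$ and $\mathcal{GC}$, and says nothing about whether $(\mathcal{GI},\mathcal{GF})$ is a duality pair; the implication $\eqref{GI-covering}\Rightarrow\eqref{GI-GF-duality-pair}$ is genuinely nontrivial and does use the covering hypothesis (through noetherianity).
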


\begin{proof}
%\eqref{GI-pure} $\Rightarrow$ \eqref{GI-GF-duality-pair}. We show that $K \in \mathcal{GI}$ if and only if $K^+ \in \mathcal{GF}$.
%
%First, suppose that $K \in \mathcal{GI}$, then by  \cite[Proposition 12]{ding:96:coherent} we get that $K^+ \in \mathcal{GF}$.
%
%Next suppose that $K^+ \in \mathcal{GF}$. Then $K^{++} \in  \mathcal{GI}$ \cite[Theorem 3.6]{holm:05:gor.dim}. Since $K$ is a pure submodule of $K^{++}$ and $\mathcal{GI}$ is closed under pure submodules, then we have that $K \in \mathcal{GI}$.
%
%
%
%Since $\mathcal{GF}$ is closed under direct summands and (arbitrary direct sums), the pair $(\mathcal{GI}, \mathcal{GF})$ is a duality pair.

\eqref{GI-pure} $\Rightarrow$ \eqref{GI-covering}.
This is Proposition \ref{prop-GI-covering}.

\eqref{GI-covering} $\Rightarrow$ \eqref{GI-GF-duality-pair}.
Since $\mathcal{GI}$ is covering, Lemma \eqref{R-is-Noeth} gives that $R$ is left noetherian.
%
%Now, we we show that $K \in \mathcal{GI}$ if and only if $K^+ \in \mathcal{GF}$:\\
%
Now suppose that $K \in \mathcal{GI}$, then  by \cite[Proposition 12]{ding:96:coherent},  we get that $K^+ \in \mathcal{GF}$. Conversely, suppose  that $K^+$ is Gorenstein flat. Since the ring $R$ is left noetherian there exists an exact sequence
\begin{equation} \label{ses}
0 \rightarrow K \rightarrow G \rightarrow L \rightarrow 0.
\end{equation}
 with $G \in \mathcal{GI}$ and $L \in {}^\bot \mathcal{GI} = \mathcal{W}$. Therefore we have an exact sequence
 \[
0 \rightarrow L^+ \rightarrow G^+ \rightarrow K^+ \rightarrow 0,
\]
 with $G^+ \in \mathcal{GF}$ and  $L^+ \in \mathcal{GC}$, by Proposition \eqref{(W,GC)-dual-pair}.

Since $\Ext^1(K^+, L^+) = 0$, then $G^+ \cong L^+ \oplus K^+$,  and therefore $L^+ \in \mathcal{GF}$. It follows that $L^+ \in \mathcal{GF} \cap \mathcal{GC}$, so $L^+$ is a flat
  $R$-module, and from \cite[Corollary 3.2.17]{E-J-Vol-1}, we have that $L$ is an injective module.

Thus we get that the short exact sequence in $\eqref{ses}$ has $G \in \mathcal{GI}$ and $L$ an injective modules; this gives that $K$ has finite Gorenstein injective dimension \cite[Theorem 2.25]{holm:05:gor.dim}. From \cite[Lemma 2.18]{holm:05:gor.dim}, there exists an
exact sequence
\[
0 \rightarrow B \rightarrow H \rightarrow K \rightarrow 0,
\]
with $B \in \mathcal{GI}$ and $H$ such that $\text{id}_R H = \text{Gid}_R K < \infty$. Hence the exact sequence
\[
0 \rightarrow K^+ \rightarrow H^+ \rightarrow B^+ \rightarrow 0
\] has $B^+, K^+ \in \mathcal{GF}$; this in turn gives that $H^+ \in \mathcal{GF}$. Since $\text{id}_R H$ is finite, it follows from \cite[Theorem 3.2.19]{E-J-Vol-1} that $H^+$ has finite flat dimension. But a Gorenstein flat module of finite flat dimension is flat (\cite[Corollary 10.3.4]{E-J-Vol-1}).
So $H^+$ is flat and therefore $H$ is injective. Thus $\text{Gid}_R K = 0$.
%- We can prove now that $(\mathcal{GI}, \mathcal{GF})$ is a duality pair.\\
%The first condition from the definition holds by the above.
We know from \cite[Proposition 3.2]{holm:05:gor.dim} that the class of Gorenstein flat modules is closed under (arbitrary) direct sums, and under direct summands. Hence, the pair $(\mathcal{GI}, \mathcal{GF})$ is a duality pair.

\eqref{GI-GF-duality-pair}  $\Rightarrow$ \eqref{GI-pure}. From the definition of duality pairs, we have that the class $\mathcal{GI}$ is closed under pure submodules.

\end{proof}

\begin{remark}
We recently learned that Zhao and P\'erez in \cite{ZP17} also proved the results in propositions \ref{prop.cov.env} and \ref{prop.cov.env2} in a recent preprint of theirs. We thank the referees for pointing out the results in Propositions 3.2, 3.3, 3.4 and 3.5 are also in \cite{GP} (to appear in the Bulletin of the Malaysian Math. Sci. Soc.) 
\end{remark}

\section{Gorenstein AC-flat modules}

%We recall first that a module $N$ is said to be Gorenstein flat if there exists an exact complex of flat modules $\overline{F} = \ldots \rightarrow F_1 \rightarrow F_0 \rightarrow F_{-1} \rightarrow \ldots$ such that $I \otimes F$ is still exact for any injective module $I$ and such that $N$ is one of the cycles of the complex $\overline{F}$.\\

As already mentioned in the introduction, in \cite{gillespie:14:stable}  Gorenstein homological algebra is extended to arbitrary rings. To accomplish this the authors replace the finitely generated (respectively finitely presented) modules with the modules of type $FP_{\infty}$. In doing so, the injective (respectively FP-injective) modules are replaced with the absolutely clean modules, and the flat modules are replaced by the level modules. In turn, the Gorenstein injective (respectively Gorenstein projective) modules are replaced with the Gorenstein AC-injective (respectively Gorenstein AC-projective). Following the ideas of \cite{gillespie:14:stable}, we define the class of Gorenstein AC-flat modules.

\begin{definition}
A module $M$ is Gorenstein AC-flat if there exists an exact complex of flat modules $\textbf{F} = \ldots \rightarrow F_1 \rightarrow F_0 \rightarrow F_{-1} \rightarrow \ldots$ such that $A \otimes F$ is still exact for any absolutely clean module $A$ and such that $M$ is one of the cycles of the complex $\textbf{F}$.
We denote by $\mathcal{GF}_{\text{ac}}$ the class of the Gorenstein AC-flat modules.
\end{definition}

Since every injective module is an absolutely clean module, it follows that any Gorenstein AC-flat module is also a Gorenstein flat module. We note that when the ring $R$ is coherent, the two classes of modules coincide, because in this case the absolutely clean modules coincide with the absolutely pure modules; see \cite{gillespie:14:stable}.

A stronger notion is that of \emph{strongly Gorenstein AC-flat} module.

\begin{definition}
A module $M$ is strongly Gorenstein AC-flat if there is an exact complex of flat modules $\textbf{F} = \ldots \rightarrow F \xrightarrow{d} F \xrightarrow{d} F \xrightarrow{d} \ldots$ such that $A \otimes F$ is still exact for any absolutely clean module $A$ and such that $M = \kker{F \rightarrow F}$. %We denote by $\text{S-}\mathcal{GF}_{\text{ac}}$ the class of the Strongly Gorenstein AC-flat modules.
\end{definition}

%\begin{remark}
Note that every Gorenstein AC-projective module is Gorenstein AC-flat. This follows from the definition. Recall that a module $M$ is Gorenstein AC-projective if there exists an exact complex of projective modules $\textbf{F} = \ldots \rightarrow F_1 \rightarrow F_0 \rightarrow F_{-1} \rightarrow \ldots$ such that $A \otimes \textbf{F}$ is still exact for any absolutely clean module $A$ and such that $M = Z_0(\textbf{F})$.
%\end{remark}

In the remainder of this section, we show that some of the properties of the Gorenstein flat modules extend to the class of Gorenstein AC-flat modules.
We note first that, as in the case of Gorenstein flat modules, the class of Gorenstein AC-flat modules is precovering over any ring $R$.

\begin{proposition}
The class of Gorenstein AC-flat modules is precovering over any ring $R$.
\end{proposition}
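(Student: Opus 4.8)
The plan is to reduce to Enochs' solution-set criterion for precovers: a class $\mathcal X$ of $R$-modules is precovering as soon as it is closed under arbitrary direct sums and, for every module $M$, there is a \emph{set} $\{f_i\colon X_i\to M\}_{i\in I}$ of morphisms with all $X_i\in\mathcal X$ through which every morphism $X\to M$ with $X\in\mathcal X$ factors (i.e.\ each such map equals $f_i\circ g$ for some $i$ and some $g\colon X\to X_i$); the induced map $\bigoplus_{i\in I}X_i\to M$ is then an $\mathcal X$-precover. Closure of $\mathcal{GF}_{\text{ac}}$ under direct sums is clear: a direct sum of exact complexes of flat modules is an exact complex of flat modules, $A\otimes-$ preserves its exactness since tensor products and homology commute with direct sums, and the degree-$0$ cycle of a direct sum of complexes is the direct sum of the degree-$0$ cycles.

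So the real task is a cardinality bound. Fix $M$. Because the class of absolutely clean modules is definable, there is a regular cardinal $\kappa$ such that every absolutely clean module is a $\kappa$-directed colimit of absolutely clean submodules of cardinality $<\kappa$; as $A\otimes-$ and homology commute with directed colimits, a complex $\mathbf C$ of flat modules remains exact after $A\otimes-$ for \emph{every} absolutely clean $A$ as soon as it does so for every $T$ in a fixed set $\mathcal T$ of representatives of the absolutely clean modules of cardinality $<\kappa$. I then claim there is a cardinal $\lambda$, depending only on $|R|$, $|M|$ and $\kappa$, such that every $g\colon G\to M$ with $G\in\mathcal{GF}_{\text{ac}}$ factors through some Gorenstein AC-flat module of cardinality $\le\lambda$; since there is only a set of isomorphism classes of such modules, and only a set of maps from each into $M$, this produces the required solution set and finishes the proof.

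For the claim, write $G=Z_0(\mathbf F)$ for an exact complex $\mathbf F$ of flat modules with $A\otimes\mathbf F$ exact for every absolutely clean $A$, and run a transfinite L\"owenheim--Skolem construction producing a subcomplex $\mathbf S\subseteq\mathbf F$ with the following properties: (i) $\mathbf S\hookrightarrow\mathbf F$ is degreewise pure, so that $\mathbf F/\mathbf S$ is again a complex of flat modules; (ii) $\mathbf F/\mathbf S$ is exact, and $T\otimes(\mathbf F/\mathbf S)$ is exact for every $T\in\mathcal T$ (hence $A\otimes(\mathbf F/\mathbf S)$ is exact for every absolutely clean $A$); (iii) $Z_0(\mathbf S)\subseteq\ker g$; and (iv) $|G/Z_0(\mathbf S)|\le\lambda$ --- feasible because $|G/\ker g|=|\im{g}|\le|M|$, so the quotient $\mathbf F/\mathbf S$ only needs to retain a $\lambda$-small part of the degree-$0$ cycles. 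Granting such an $\mathbf S$, the module $Z_0(\mathbf F/\mathbf S)\cong G/Z_0(\mathbf S)$ lies in $\mathcal{GF}_{\text{ac}}$ and has cardinality $\le\lambda$, and since $\ker\big(G\twoheadrightarrow Z_0(\mathbf F/\mathbf S)\big)=Z_0(\mathbf S)\subseteq\ker g$, the map $g$ factors as $G\twoheadrightarrow Z_0(\mathbf F/\mathbf S)\to M$.

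The hard part is the construction of $\mathbf S$: at each successor stage only $<\lambda$ new elements may be adjoined, and they must be chosen to witness simultaneously the degreewise purity of $\mathbf S\hookrightarrow\mathbf F$, the exactness of $\mathbf F/\mathbf S$, and the exactness of $T\otimes(\mathbf F/\mathbf S)$ for each $T\in\mathcal T$, all without enlarging $Z_0(\mathbf S)$ beyond $\ker g$ or spoiling the index bound. The purity and exactness bookkeeping is essentially the argument already used in the analogous statement for Gorenstein flat modules; the genuinely new point is the tensor condition, and it is made manageable precisely by the reduction, via definability of the absolutely clean class, to the \emph{set} $\mathcal T$ of test modules, which keeps each stage of the construction set-sized and $\lambda$-bounded.
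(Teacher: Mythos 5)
Your plan is genuinely different from the paper's, and the crucial step in it is not carried out; I believe there is a real gap there. The paper's proof is very short and relies entirely on two external facts: (1) by a theorem of Estrada--Gillespie \cite[Theorem 3.7]{EG15}, the class $\widetilde{\mathcal F}$ of exact complexes of flat modules that stay exact after applying $A\otimes-$ for every absolutely clean $A$ is special precovering in the category of chain complexes; and (2) the transfer argument of Yang (\cite{yang:14:gorflatprec}, Theorem A) converts an $\widetilde{\mathcal F}$-precover of a suitable complex into a $\mathcal{GF}_{\text{ac}}$-precover of the module in degree $0$. You instead attempt a direct solution-set argument in $\Rmod$, which is structurally reasonable (Enochs' criterion and the closure under direct sums are fine, and the observation that definability of the absolutely clean class reduces the tensor-exactness test to a set $\mathcal T$ is a good one), but the entire weight of the proof falls on producing the subcomplex $\mathbf S\subseteq\mathbf F$ with properties (i)--(iv), and you explicitly leave that to an ``essentially the same as the Gorenstein flat case'' appeal.

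That appeal is where the gap is. First, to my knowledge the Gorenstein-flat precover result over an arbitrary ring is \emph{not} proved in the literature by the kind of degreewise L\"owenheim--Skolem construction you describe; the proof the present paper invokes (Yang's) goes through the complex category, exactly as in the displayed proof, and the coherent-ring proofs of Enochs--Jenda--L\'opez-Ramos use coherence in an essential way. So there is no ``argument already used'' that you can transport. Second, and more seriously, there is a genuine tension among your conditions that you do not address: during the transfinite construction you must keep adjoining elements of the $F_i$ to $\mathbf S$ to witness degreewise purity of $\mathbf S\hookrightarrow\mathbf F$, exactness of $\mathbf F/\mathbf S$, and exactness of $T\otimes(\mathbf F/\mathbf S)$ for each $T\in\mathcal T$, and some of those witnesses will unavoidably be elements of $F_0$ that are cycles, i.e.\ elements of $G$. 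Your condition (iii) requires that every cycle landing in $S_0$ already lie in $\ker g$, but nothing in the purity/exactness bookkeeping forces a witness to lie in $\ker g$ rather than in $G\setminus\ker g$, and there is no a priori freedom to shift a witness by an element of $\ker g$ to arrange this. You need either an argument that such conflicts can always be resolved, or a different factorization scheme (for instance the complex-category route the paper actually takes, where the precover is taken at the level of complexes and $\kker g$ never enters). Note also that the isomorphism $Z_0(\mathbf F/\mathbf S)\cong G/Z_0(\mathbf S)$ you use silently requires $\mathbf S$ to be exact; this does follow from (ii) together with exactness of $\mathbf F$, but it is one more link in a chain you are asserting rather than proving. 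As written, then, the proposal identifies a plausible strategy but does not constitute a proof, and it does not reproduce the argument the paper gives.
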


\begin{proof}
Let $\widetilde{\mathcal F}$ be the class  of exact complexes of flat modules $\textbf{F}$ such that $A\otimes \textbf{F}$ is exact, for each absolutely clean right $R$-module $A$. Then, by \cite[Theorem 3.7]{EG15}, the class $\widetilde{\mathcal F}$ is special precovering. Finally we can argue as in the proof of Theorem A in \cite{yang:14:gorflatprec} to infer that every module has a Gorenstein AC-flat precover.
\end{proof}

We show that (as in the case of Gorenstein flat modules) if the class of Gorenstein AC-flat modules is closed under extensions, then it is also covering.

The following result gives some equivalent characterizations of the Gorenstein AC-flat modules.

\begin{lemma} \label{charac-GF_ac}
Let $R$ be any ring. The following are equivalent:
\begin{enumerate}[leftmargin=*]
\item $M$ is a Gorenstein AC-flat left $R$-module. \label{M-Gor}
\item $M$ is a left $R$-module and satisfies the following two conditions: \label{two-cond}
\begin{enumerate}
   \item $Tor_i (A,M) = 0$ for all $i > 0$ and all absolutely clean right $R$-modules $A$. \label{cond-1}
  \item There exists an exact sequence $0 \rightarrow M \rightarrow F^0 \rightarrow F^1 \rightarrow \ldots$ with each $F^i$ a flat left $R$-module and such that $A \otimes -$ leaves it exact whenever $A$ is an absolutely clean right $R$-module.\label{cond-2}
\end{enumerate}
\item There exists a short exact sequence of left $R$-modules
\[
0 \rightarrow M \rightarrow F \rightarrow G \rightarrow 0
\]
with $F$ flat and $G$ Gorenstein AC-flat \label{ses-GacFlata}
\end{enumerate}

\end{lemma}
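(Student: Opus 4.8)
The plan is to prove the cyclic chain of implications $\eqref{M-Gor}\Rightarrow\eqref{two-cond}\Rightarrow\eqref{ses-GacFlata}\Rightarrow\eqref{M-Gor}$. The whole argument is a dimension-shifting exercise for the functors $A\otimes_R-$ with $A$ ranging over absolutely clean right $R$-modules, and I would begin by recording two elementary facts to be used repeatedly: (i) for a short exact sequence $0\to X\to Y\to Z\to 0$ of left $R$-modules with $Y$ flat, the long exact $\Tor$-sequence gives $\Tor_{i+1}(A,Z)\cong\Tor_i(A,X)$ for all $i\ge 1$ together with an exact sequence $0\to\Tor_1(A,Z)\to A\otimes X\to A\otimes Y$; and (ii) a long exact sequence obtained by splicing short exact sequences $0\to Z_j\to Y_j\to Z_{j-1}\to 0$ stays exact after tensoring with $A$ as soon as $\Tor_1(A,Z_j)=0$ for every $j$ involved. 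Thus at each step the only way tensoring can destroy exactness is through a nonvanishing $\Tor_1$, and the real work is to keep track of which modules carry the needed $\Tor$-vanishing.

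For $\eqref{M-Gor}\Rightarrow\eqref{two-cond}$, write $M=Z_k(\mathbf{F})$ for an exact complex $\mathbf{F}=\cdots\to F_1\to F_0\to F_{-1}\to\cdots$ of flat modules with $A\otimes\mathbf{F}$ exact for every absolutely clean right $R$-module $A$. For any cycle $C=Z_j(\mathbf{F})$, the left truncation $\cdots\to F_{j+2}\to F_{j+1}\to C\to 0$ is a flat resolution of $C$, so $\Tor_i(A,C)$ is computed from it, and exactness of $A\otimes\mathbf{F}$ at every spot forces $\Tor_i(A,C)=0$ for all $i>0$ and all cycles $C$; in particular \eqref{cond-1} holds for $M$. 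For \eqref{cond-2}, take the right truncation $0\to M\to F_k\to F_{k-1}\to\cdots$, which is a flat coresolution of $M$; decomposing $\mathbf{F}$ into the short exact sequences $0\to Z_j\to F_j\to Z_{j-1}\to 0$ and applying fact (ii) together with the $\Tor_1$-vanishing just obtained shows that this coresolution stays exact after tensoring with any absolutely clean $A$.

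For $\eqref{two-cond}\Rightarrow\eqref{ses-GacFlata}$, let $G$ be the cokernel of the monomorphism $M\to F^0$ from \eqref{cond-2}, so that $0\to M\to F^0\to G\to 0$ is exact with $F^0$ flat and $0\to G\to F^1\to F^2\to\cdots$ is an exact flat coresolution. Fact (i) applied to this short exact sequence, together with \eqref{cond-1} for $M$ and the injectivity of $A\otimes M\to A\otimes F^0$ (part of \eqref{cond-2}), gives $\Tor_i(A,G)=0$ for all $i>0$, and a short diagram chase shows the coresolution $0\to G\to F^1\to F^2\to\cdots$ remains exact after tensoring with $A$. Splicing a flat resolution $\cdots\to P_1\to P_0\to G\to 0$ onto it yields an exact complex $\cdots\to P_1\to P_0\to F^1\to F^2\to\cdots$ of flat modules with $G$ among its cycles; the $\Tor$-vanishing for $G$ makes its left half stay exact after tensoring, while exactness at the splice $A\otimes P_0\to A\otimes F^1$ follows by factoring through $A\otimes G$. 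Hence $G$ is Gorenstein AC-flat, proving \eqref{ses-GacFlata}. For $\eqref{ses-GacFlata}\Rightarrow\eqref{M-Gor}$, given $0\to M\to F\to G\to 0$ with $F$ flat and $G$ Gorenstein AC-flat, I apply the already-proved implication $\eqref{M-Gor}\Rightarrow\eqref{two-cond}$ to $G$ to get $\Tor_i(A,G)=0$ for $i>0$ and a flat coresolution $0\to G\to E^0\to E^1\to\cdots$ that stays exact after tensoring; splicing it onto $0\to M\to F\to G\to 0$ produces such a coresolution $0\to M\to F\to E^0\to E^1\to\cdots$ of $M$, while fact (i) gives $\Tor_i(A,M)=0$ for $i>0$, so a flat resolution $\cdots\to P_1\to P_0\to M\to 0$ spliced on the left yields an exact complex $\cdots\to P_1\to P_0\to F\to E^0\to E^1\to\cdots$ of flat modules, still exact after tensoring with any absolutely clean $A$, and having $M$ as a cycle. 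Therefore $M$ is Gorenstein AC-flat.

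I expect no conceptual obstacle here: the statement is the literal analogue for Gorenstein AC-flat modules of the classical characterization of Gorenstein flat modules (cf.\ \cite{holm:05:gor.dim}, \cite{E-J-Vol-1}), with injective modules replaced by absolutely clean modules. The one place care is needed is the repeated verification that $A\otimes_R-$ preserves exactness of the spliced complexes; the observation that makes this routine is that every cycle of $\mathbf{F}$, and of each complex constructed along the way, has vanishing higher $\Tor$ against absolutely clean modules, which collapses every such check to the $\Tor_1$-bookkeeping handled by facts (i) and (ii).
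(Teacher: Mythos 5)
Your proof is correct and it is precisely the argument the paper is invoking: the paper's own ``proof'' of Lemma \ref{charac-GF_ac} consists of the single sentence that the argument of \cite[Lemma 2.4]{bennis:12:homological} goes through verbatim once ``injective'' is replaced by ``absolutely clean,'' and your write-up is a faithful unpacking of that dimension-shifting / splicing argument in the cyclic order \eqref{M-Gor}$\Rightarrow$\eqref{two-cond}$\Rightarrow$\eqref{ses-GacFlata}$\Rightarrow$\eqref{M-Gor}. The $\Tor$-bookkeeping (vanishing of $\Tor_i(A,-)$ on every cycle, and exactness at the splice points via factoring through $A\otimes G$) is exactly what is needed and is carried out correctly.
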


\begin{proof}
The argument in \cite[Lemma 2.4]{bennis:12:homological} gives the result, once injective is replaced by absolutely clean.
\end{proof}

\begin{proposition}
Let $R$ be a ring such that the class of Gorenstein AC-flat modules is closed under extensions. Then the class of Gorenstein AC-flat modules, $\mathcal{GF}_{ac}$, is closed under direct limits.
\end{proposition}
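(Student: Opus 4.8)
The plan is to obtain the statement from the characterization of Gorenstein AC-flat modules in Lemma~\ref{charac-GF_ac}, following the pattern of the argument of \cite{bennis:12:homological} with ``absolutely clean'' in place of ``injective''. So let $M=\varinjlim_{\lambda\in\Lambda}M_\lambda$ be a direct limit over a directed set $\Lambda$, with each $M_\lambda\in\mathcal{GF}_{ac}$.

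I would first dispose of two preliminary points. The class $\mathcal{GF}_{ac}$ is closed under arbitrary direct sums: the direct sum of the defining exact complexes of flat modules is again an exact complex of flat modules which stays exact after tensoring with any absolutely clean $A$ (direct sums of flats are flat, and both homology and $A\otimes_R-$ commute with direct sums), and $\bigoplus_\lambda M_\lambda$ is the corresponding cycle; equivalently, conditions (a) and (b) of Lemma~\ref{charac-GF_ac} are visibly stable under direct sums. Secondly, the canonical epimorphism $\rho\colon Q:=\bigoplus_\lambda M_\lambda\twoheadrightarrow M$ is a \emph{pure} epimorphism, because any map from a finitely presented module into $M=\varinjlim M_\lambda$ factors through some $M_\lambda$ ($\Lambda$ being directed) and hence lifts along $\rho$. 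So we have a pure exact sequence $0\to K\to Q\to M\to 0$ with $Q\in\mathcal{GF}_{ac}$, and the statement is reduced to showing that a pure epimorphic image of a Gorenstein AC-flat module is Gorenstein AC-flat.

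Next I would check condition (a) of Lemma~\ref{charac-GF_ac} for $M$: for every absolutely clean right $R$-module $A$ and every $i>0$ one has $\Tor^R_i(A,M)=\varinjlim_\lambda\Tor^R_i(A,M_\lambda)=0$, since $\Tor$ commutes with direct limits (this can also be read off the long exact $\Tor$-sequence of $0\to K\to Q\to M\to 0$ using purity and $\Tor^R_{>0}(A,Q)=0$, which incidentally shows that $K$ too satisfies condition (a)). The real work is condition (b): building an exact sequence $0\to M\to F^0\to F^1\to\cdots$ with flat terms that remains exact after $A\otimes_R-$; by Lemma~\ref{charac-GF_ac}\eqref{ses-GacFlata} it is enough to produce a single short exact sequence $0\to M\to F\to G\to 0$ with $F$ flat and $G\in\mathcal{GF}_{ac}$. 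Here I would start from a short exact sequence $0\to Q\to F^0\to Q^1\to 0$ with $F^0$ flat and $Q^1\in\mathcal{GF}_{ac}$ (Lemma~\ref{charac-GF_ac}\eqref{ses-GacFlata} applied to $Q$); from the inclusions $K\subseteq Q\subseteq F^0$ one gets $0\to M\to F^0/K\to Q^1\to 0$, and splicing this with the flat coresolution of $Q$ gives an exact sequence $0\to M\to F^0/K\to F^1\to F^2\to\cdots$ that stays exact after $A\otimes_R-$ (the one new point being exactness at $A\otimes_R M$, which holds because $\Tor^R_1(A,Q^1)=0$). What remains is to replace the single non-flat term $F^0/K=\coker{K\hookrightarrow F^0}$ by flat modules, and this is the step where the hypothesis that $\mathcal{GF}_{ac}$ is closed under extensions is used, exactly as in \cite{bennis:12:homological}, to keep the cosyzygies obtained by dividing $K$ out inside $\mathcal{GF}_{ac}$. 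Once (a) and (b) are in place, Lemma~\ref{charac-GF_ac} yields $M\in\mathcal{GF}_{ac}$.

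I expect this last point to be the main obstacle. The difficulty is that $M$, not being flat in general, admits no pure embedding into a flat module, so a flat coresolution of $M$ cannot be obtained simply by tensoring the presentation $0\to K\to Q\to M\to 0$ or by ``dividing $K$ out of'' a flat coresolution of $Q$: the quotient $F^0/K$ need not be flat, since $K\hookrightarrow F^0$ is the composite of the pure inclusion $K\hookrightarrow Q$ with the non-pure inclusion $Q\hookrightarrow F^0$. The function of the extension-closure hypothesis is precisely to carry the purity of the presentation through the construction of the coresolution one cosyzygy at a time; without it only condition (a) survives the passage to the direct limit.
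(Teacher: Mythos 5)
Your overall strategy---reduce to showing $\mathcal{GF}_{ac}$ is closed under pure quotients via the pure epimorphism $\bigoplus_\lambda M_\lambda\twoheadrightarrow\varinjlim M_\lambda$, and then verify conditions (a) and (b) of Lemma~\ref{charac-GF_ac}---is a reasonable framework, and your verification of closure under direct sums, of the purity of the canonical presentation, and of condition (a) for both $M$ and $K$ is correct. But you have not actually established condition (b), and you say so yourself. The exact sequence $0\to M\to F^0/K\to F^1\to F^2\to\cdots$ you construct is indeed exact and $A\otimes_R-$-exact, but its first term $F^0/K$ is not flat, and the sentence asserting that the extension-closure hypothesis allows one to ``replace the single non-flat term $F^0/K$ by flat modules'' and ``keep the cosyzygies inside $\mathcal{GF}_{ac}$'' is not an argument: it does not say what flat modules replace $F^0/K$, nor why the comparison with the sequence you already built would preserve $A\otimes_R-$-exactness, nor at which point extension-closure enters. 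The most tempting route from where you stop---proving $F^0/K\in\mathcal{GF}_{ac}$ and then invoking Lemma~\ref{GF-ac-proj-resolv} (projectively resolving) on $0\to M\to F^0/K\to Q^1\to 0$---is blocked by circularity, since $F^0/K$ is an extension of $Q^1$ by the very module $M$ you are trying to show is Gorenstein AC-flat. So there is a genuine gap at the crucial step, not a cosmetic omission.

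A second, smaller issue: you attribute the missing step to ``exactly as in \cite{bennis:12:homological}.'' That reference supplies the \emph{characterization} in Lemma~\ref{charac-GF_ac}, not a direct-limit argument. The paper instead cites the proof of Lemma~3.1 of \cite{gang:12:gorflat} for this proposition, a reference you do not engage with; the mechanism there for producing the flat coresolution of the limit is precisely what your sketch is missing, and it is not the ``divide $K$ out of a coresolution of $Q$'' manoeuvre you outline.
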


\begin{proof}
The argument in the proof of \cite[Lemma 3.1]{gang:12:gorflat} gives the result, once injective is replaced by absolutely clean.
\end{proof}

\begin{proposition}
Let $R$ be a ring such that the class $\mathcal{GF}_{ac}$ is closed under extensions. Then $\mathcal{GF}_{ac}$ is covering.
\end{proposition}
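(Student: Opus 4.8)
The plan is to show that $\mathcal{GF}_{ac}$ is the left-hand side of a perfect cotorsion pair, hence covering. First I would observe that the hypothesis — $\mathcal{GF}_{ac}$ is closed under extensions — together with Lemma \ref{charac-GF_ac} and the preceding proposition gives enough closure properties to run the standard Eklof–Trlifaj machinery. Concretely, $\mathcal{GF}_{ac}$ contains all flat modules (in particular $R$), is closed under extensions by hypothesis, is closed under direct limits by the previous proposition, and is closed under direct summands (from characterization \eqref{ses-GacFlata} of Lemma \ref{charac-GF_ac}, or directly from the definition via a standard Eilenberg-swindle-type argument). So the goal is to produce a set $S$ of modules with $\mathcal{GF}_{ac} = {}^{\bot}(S^{\bot})$, which by \cite[Theorem 6.11 or 7.4.1]{Gobel-Trlifaj}-type results yields that $(\mathcal{GF}_{ac}, \mathcal{GF}_{ac}^{\bot})$ is a complete cotorsion pair cogenerated by a set.

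The key step is the cogeneration-by-a-set argument. Using characterization \eqref{ses-GacFlata}, every $M \in \mathcal{GF}_{ac}$ fits in a short exact sequence $0 \to M \to F \to G \to 0$ with $F$ flat and $G \in \mathcal{GF}_{ac}$; iterating, $M$ has a coresolution by flat modules that stays exact after tensoring with absolutely clean modules, and $M$ also satisfies the $\Tor$-vanishing condition \eqref{cond-1}. I would then invoke the fact that the class $\widetilde{\mathcal{F}}$ of exact complexes of flats that remain exact after tensoring with any absolutely clean module is cogenerated by a set (this is exactly the content behind \cite[Theorem 3.7]{EG15}, and is the engine behind the precovering proposition already proved); the cycles of such complexes are precisely the Gorenstein AC-flat modules. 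Descending from the complex category to the module category, one extracts a set $S$ such that a module $M$ lies in $\mathcal{GF}_{ac}$ if and only if $\Ext^1_R(M, -)$ vanishes on $S^{\bot}$, using the standard trick that $\mathcal{GF}_{ac}$-membership is detected by solvability of a transfinite tower of extension problems against a set-sized collection of test modules. Once $\mathcal{GF}_{ac} = {}^{\bot}(S^{\bot})$ is established, completeness of the cotorsion pair is automatic.

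To upgrade from complete to perfect, I would use that $\mathcal{GF}_{ac}$ is closed under direct limits (the previous proposition): a complete cotorsion pair whose left class is closed under direct limits is perfect, by \cite[Theorem 2.6 or Corollary]{holm:08:purity}-style results (Enochs' theorem that a covering class closed under direct limits gives covers, combined with completeness). Hence $\mathcal{GF}_{ac}$ is covering.

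The main obstacle I anticipate is the descent from the complex category to the module category — i.e., verifying carefully that the set cogenerating $\widetilde{\mathcal{F}}$ in $\mathrm{Ch}(R)$ really does produce a set cogenerating $\mathcal{GF}_{ac}$ in $R\text{-}\mathrm{Mod}$, and that the extension-closure hypothesis is genuinely what makes the class of cycles behave well (without it, the cycles of complexes in $\widetilde{\mathcal F}$ need not even form a class closed under extensions, which is why the hypothesis appears). Making the Eklof-style argument precise, rather than hand-waving "argue as in the coherent case," is where the real work lies; the rest is bookkeeping with the three characterizations in Lemma \ref{charac-GF_ac}.
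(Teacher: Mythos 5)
Your proposal contains the correct kernel — closure under direct limits plus a (pre)covering-type hypothesis yields covering — but you have wrapped it in a large amount of machinery that is both unnecessary and, within the paper's logic, potentially circular. The paper's actual proof is one sentence: the class $\mathcal{GF}_{ac}$ was already shown to be \emph{precovering} over an arbitrary ring $R$ (the earlier proposition, which uses \cite{EG15} and the argument of \cite{yang:14:gorflatprec} and requires no extension hypothesis); the immediately preceding proposition shows $\mathcal{GF}_{ac}$ is closed under direct limits (using the extension-closure hypothesis); and a precovering class closed under direct limits is automatically covering, by Enochs' theorem. There is no need to manufacture a cotorsion pair, to invoke Eklof--Trlifaj set-cogeneration, or to descend from complexes to modules: precovering is already in hand, and the only new ingredient the extension hypothesis buys you is direct-limit closure.

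The cotorsion-pair detour you sketch is moreover in tension with the paper's own development. The claim that $(\mathcal{GF}_{ac}, \mathcal{GF}_{ac}^\bot)$ is a complete cotorsion pair is the \emph{next} proposition in the paper, and its proof uses the covering property that you are trying to establish here (specifically, it uses the existence of special precovers, which it obtains from covers). Your route would therefore have to establish $\mathcal{GF}_{ac} = {}^\bot(\mathcal{GF}_{ac}^\bot)$ by a genuinely independent argument, and the step you flag as ``where the real work lies'' — extracting a set $S$ cogenerating $\mathcal{GF}_{ac}$ in $R\text{-}\mathrm{Mod}$ from the one cogenerating $\widetilde{\mathcal F}$ in $\mathrm{Ch}(R)$, and showing the cycles form a cotorsion-pair left class — is a real gap that is not filled by the proposal as written. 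None of this effort is needed if you simply cite the precovering proposition. One further small slip: at the end you write ``a covering class closed under direct limits gives covers,'' which is a tautology; the statement you want is that a \emph{precovering} class closed under direct limits gives covers.
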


\begin{proof}
The class $\mathcal{GF}_{ac}$ is precovering and closed under direct limits, so in this case it is a covering class of modules.
\end{proof}

Recall that a class of modules $\mathcal{X}$  is said to be \emph{projectively resolving} is it is closed under kernels of epimorphism between modules from $\mathcal{X}$, extension and contains the class of projective modules.

\begin{lemma} \label{GF-ac-proj-resolv}
If the class of Gorenstein AC-flat modules is closed under extensions, then it is projectively resolving.
\end{lemma}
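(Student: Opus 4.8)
The plan is to verify the three defining conditions of a projectively resolving class for $\mathcal{GF}_{\text{ac}}$, using the characterization in Lemma \ref{charac-GF_ac} to handle kernels of epimorphisms. First, every projective module is flat, hence (being a cycle of an exact complex of projectives, which are in particular flat, and which stays exact under $A\otimes-$ because projectives are flat so all the relevant $\Tor$'s vanish and one can build the required co-resolution by splicing split exact sequences) is Gorenstein AC-flat; so $\mathcal{GF}_{\text{ac}}$ contains the projectives. Closure under extensions is the hypothesis. The only real content is closure under kernels of epimorphisms: given a short exact sequence $0\to M'\to M\to M''\to 0$ with $M,M''\in\mathcal{GF}_{\text{ac}}$, I must show $M'\in\mathcal{GF}_{\text{ac}}$.

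For this I would follow the standard Holm-style argument (as in \cite{holm:05:gor.dim}, adapted to the AC-setting exactly as Lemma \ref{charac-GF_ac} adapts \cite{bennis:12:homological}). First, condition \eqref{cond-1} of Lemma \ref{charac-GF_ac} for $M'$ is automatic from the long exact sequence in $\Tor$: since $\Tor_i(A,M)=0=\Tor_i(A,M'')$ for all $i>0$ and all absolutely clean $A$, the connecting maps give $\Tor_i(A,M')=0$ for all $i>0$ as well (using $\Tor_{i+1}(A,M'')\to \Tor_i(A,M')\to \Tor_i(A,M)$). Second, I need to produce a right flat co-resolution of $M'$ that remains exact after tensoring with any absolutely clean module. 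Here I would use characterization \eqref{ses-GacFlata}: write short exact sequences $0\to M\to F\to G\to 0$ and $0\to M''\to F''\to G''\to 0$ with $F,F''$ flat and $G,G''\in\mathcal{GF}_{\text{ac}}$. Forming a pullback/pushout diagram (the horseshoe-type construction) one assembles a short exact sequence $0\to M'\to F\oplus(\text{something flat})\to G'\to 0$; the subtlety is controlling the cokernel $G'$, and this is exactly where closure under extensions enters a second time — it guarantees $G'\in\mathcal{GF}_{\text{ac}}$, because $G'$ sits in a short exact sequence with outer terms built from $G$ and $G''$ (or rather one shows $G'$ is an extension of Gorenstein AC-flat modules). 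Iterating, one builds the full co-resolution $0\to M'\to F^0\to F^1\to\cdots$ with flat terms; exactness after $A\otimes-$ follows because at each stage the kernel is Gorenstein AC-flat and the relevant higher $\Tor$'s vanish, so $A\otimes-$ keeps each short exact sequence exact. Then Lemma \ref{charac-GF_ac}, \eqref{two-cond}$\Rightarrow$\eqref{M-Gor}, concludes $M'\in\mathcal{GF}_{\text{ac}}$.

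The main obstacle I anticipate is the bookkeeping in the co-resolution step: one must simultaneously maintain that the ascending flat co-resolution of $M'$ stays exact under $A\otimes-$ and that all the intermediate cokernels remain in $\mathcal{GF}_{\text{ac}}$, and it is precisely the "closed under extensions" hypothesis that makes the cokernel argument go through (without it, the analogous statement for plain Gorenstein flat modules is also what requires coherence or a similar hypothesis in the classical case). Everything else — the $\Tor$ vanishing and the containment of projectives — is routine diagram-chasing, so I would state those briefly and spend the bulk of the proof on the horseshoe construction, mirroring the proof of \cite[Theorem 2.5]{holm:05:gor.dim} with "injective" replaced by "absolutely clean" throughout, just as the preceding lemmas in this section instruct.
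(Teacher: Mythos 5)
Your plan is sound and would work, but it is substantially longer than the paper's argument and misses the shortcut that makes the paper's proof a one-step affair. You correctly reduce to closure under kernels of epimorphisms, you correctly note that the Tor-vanishing condition \eqref{cond-1} passes to $M'$ automatically, and you correctly identify that the pushout construction plus closure under extensions is the engine. But then you propose verifying condition \eqref{two-cond} of Lemma \ref{charac-GF_ac} by iterating the horseshoe/pushout to build the entire right flat co-resolution of $M'$, maintaining at each stage that the cokernels stay in $\mathcal{GF}_{\text{ac}}$ and that $A\otimes-$ preserves exactness. None of that iteration is necessary: characterization \eqref{ses-GacFlata} is itself an \emph{if and only if}, so a \emph{single} short exact sequence $0\to M'\to F\to D\to 0$ with $F$ flat and $D\in\mathcal{GF}_{\text{ac}}$ already forces $M'\in\mathcal{GF}_{\text{ac}}$.

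The paper does exactly this. Given $0\to A\to B\to C\to 0$ with $B,C\in\mathcal{GF}_{\text{ac}}$, take the short exact sequence $0\to B\to F\to G\to 0$ provided by characterization \eqref{ses-GacFlata} for $B$ alone (you do not also need one for $C$), and form the pushout of $F\leftarrow B\to C$. The resulting diagram has middle row $0\to A\to F\to D\to 0$ and right column $0\to C\to D\to G\to 0$. The right column shows $D$ is an extension of two Gorenstein AC-flat modules, hence $D\in\mathcal{GF}_{\text{ac}}$ by hypothesis, and then the middle row together with characterization \eqref{ses-GacFlata} (applied in the direction you didn't exploit) yields $A\in\mathcal{GF}_{\text{ac}}$ immediately. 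So your ``main obstacle'' about bookkeeping the co-resolution evaporates, you need only one pushout rather than a combined horseshoe on $M$ and $M''$, and the extension hypothesis enters exactly once. You should also double-check whether projectives really need a separate argument: once closure under kernels of epimorphisms is established, the usual formulation of projectively resolving only asks you to contain projectives and be closed under extensions and such kernels, and the paper simply takes containment of projectives (indeed of flats, via the trivial complex $\cdots\to 0\to F=F\to 0\to\cdots$) as obvious and focuses solely on the kernel condition.
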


\begin{proof}
To claim that the class of Gorenstein AC-flat modules is projectively resolving, it
suffices to prove that it is closed under kernels of epimorphisms. Then, consider
a short exact sequence of left $R$-modules $0 \rightarrow A \rightarrow B \rightarrow C \rightarrow 0$, where $B$ and $C$ are Gorenstein AC-flat.
We prove that $A$ is Gorenstein AC-flat. Since $B$ is Gorenstein AC-flat, there exists a short exact sequence
of left $R$-modules $0 \rightarrow B \rightarrow F \rightarrow G \rightarrow 0$, where $F$ is flat and $G$ is Gorenstein AC-flat. Consider the
following pushout diagram:

\[
\begin{diagram}
\node{}\node{}\node{0}\arrow{s}\node{0}\arrow{s}\\
\node{0}\arrow{e}\node{A}\arrow{s,=}\arrow{e}\node{B}\arrow{s}\arrow{e}\node{C}\arrow{s}\arrow{e}\node{0}\\
\node{0}\arrow{e}\node{A}\arrow{e}\node{F}\arrow{s}\arrow{e}\node{D}\arrow{s}\arrow{e}\node{0}\\
\node{}\node{}\node{G}\arrow{s}\arrow{e,=}\node{G}\arrow{s}\\
\node{}\node{}\node{0}\node{0}
\end{diagram}
\]
From the right vertical sequence, and since $\mathcal{GF}_{\text{ac}}$ is closed under extensions, we have that the $R$-module $D$ is Gorenstein AC-flat.
Therefore, since $F$ is flat, we get that $A$ is Gorenstein AC-flat, by Lemma \eqref{charac-GF_ac}.
\end{proof}

We recall that if $\mathcal{X}$ is a class of modules that is projectively resolving and closed under countable direct sums, then $\mathcal{X}$ is also closed under direct summands; see  \cite[Proposition 1.4]{holm:05:gor.dim}.

\begin{proposition}
Let $R$ be a ring such that the class $\mathcal{GF}_{ac}$ is closed under extensions. Then $(\mathcal{GF}_{ac}, \mathcal{GF}_{ac}^\bot)$ is a complete and hereditary cotorsion pair.
\end{proposition}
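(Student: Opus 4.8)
The plan is to reduce the statement to the fact, already established in this section, that $\mathcal{GF}_{ac}$ is covering under the present hypothesis, and then run the standard Wakamatsu--Salce argument. First I would record the structural properties of $\mathcal{GF}_{ac}$ that are needed. By Lemma \ref{charac-GF_ac}, every flat module $F$ is Gorenstein AC-flat (apply the criterion to the short exact sequence $0\to F\xrightarrow{\,1\,}F\to 0\to 0$), so $\mathcal{GF}_{ac}$ contains all flat modules, in particular all projectives and the ring $R$. Forming the direct sum of the defining complexes, and using that $A\otimes-$ commutes with direct sums, shows that $\mathcal{GF}_{ac}$ is closed under arbitrary direct sums. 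Under the present hypothesis, $\mathcal{GF}_{ac}$ is projectively resolving by Lemma \ref{GF-ac-proj-resolv}; combining this with closure under countable direct sums and \cite[Proposition 1.4]{holm:05:gor.dim}, $\mathcal{GF}_{ac}$ is also closed under direct summands. Hence $\mathcal{GF}_{ac}$ is a \emph{resolving} class: it contains the projectives and is closed under extensions, kernels of epimorphisms, and direct summands.

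Next I would use that $\mathcal{GF}_{ac}$ is covering. Indeed it is precovering over any ring, and under the hypothesis it is closed under direct limits, and a precovering class closed under direct limits is covering. Let $M$ be an arbitrary module and $f\colon G\to M$ a $\mathcal{GF}_{ac}$-cover. Since $R\in\mathcal{GF}_{ac}$, any epimorphism onto $M$ from a free module factors through $f$, so $f$ is surjective. As $\mathcal{GF}_{ac}$ is closed under extensions, Wakamatsu's Lemma applied to the cover $f$ gives $\kker{f}\in\mathcal{GF}_{ac}^{\bot}$. Thus
\[
0\longrightarrow \kker{f}\longrightarrow G\longrightarrow M\longrightarrow 0
\]
is a special $\mathcal{GF}_{ac}$-precover of $M$; as $M$ was arbitrary, every module admits a special $\mathcal{GF}_{ac}$-precover.

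Then I would show that $(\mathcal{GF}_{ac},\mathcal{GF}_{ac}^{\bot})$ is a cotorsion pair. The inclusion $\mathcal{GF}_{ac}\subseteq{}^{\bot}(\mathcal{GF}_{ac}^{\bot})$ is immediate. Conversely, let $N\in{}^{\bot}(\mathcal{GF}_{ac}^{\bot})$ and take a special precover $0\to Y\to G\to N\to 0$ with $G\in\mathcal{GF}_{ac}$ and $Y\in\mathcal{GF}_{ac}^{\bot}$; then $\Ext_R^1(N,Y)=0$, so the sequence splits and $N$ is a direct summand of $G$, whence $N\in\mathcal{GF}_{ac}$ by the first step. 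Therefore $\mathcal{GF}_{ac}={}^{\bot}(\mathcal{GF}_{ac}^{\bot})$, so $(\mathcal{GF}_{ac},\mathcal{GF}_{ac}^{\bot})$ is a cotorsion pair; since every module has a special $\mathcal{GF}_{ac}$-precover, Salce's Lemma yields that every module also has a special $\mathcal{GF}_{ac}^{\bot}$-preenvelope, and hence the cotorsion pair is complete. Finally, since $\mathcal{GF}_{ac}$ contains the projectives and is closed under kernels of epimorphisms, the usual dimension-shifting argument gives $\Ext_R^i(A,B)=0$ for all $i\geq 1$, $A\in\mathcal{GF}_{ac}$, $B\in\mathcal{GF}_{ac}^{\bot}$; that is, the cotorsion pair is hereditary.

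The only genuinely delicate point is the passage from \emph{precovers} to honest \emph{covers} required by Wakamatsu's Lemma, and this is exactly where the hypothesis is used, through closure under direct limits of $\mathcal{GF}_{ac}$ (which itself rested on closure under extensions). Everything after that — the Wakamatsu kernel computation, the splitting argument, Salce's Lemma, and dimension shifting — is the routine packaging familiar from the classical Gorenstein flat case, so I expect no further difficulties.
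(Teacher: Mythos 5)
Your proof is correct and follows essentially the same route as the paper: establish that $\mathcal{GF}_{ac}$ is covering, use Wakamatsu's Lemma (available because of closure under extensions) to upgrade covers to special precovers, run the splitting argument to identify ${}^{\bot}(\mathcal{GF}_{ac}^{\bot})$ with $\mathcal{GF}_{ac}$, invoke Salce's Lemma for completeness, and get heredity from $\mathcal{GF}_{ac}$ being projectively resolving. Your write-up is more explicit than the paper's at several points the paper leaves implicit — the surjectivity of $\mathcal{GF}_{ac}$-covers (because $R\in\mathcal{GF}_{ac}$), the appeal to closure under direct summands when concluding $N\in\mathcal{GF}_{ac}$ from $N\oplus Y\cong G$, and the use of Salce's Lemma to pass from enough projectives to completeness — but these are details, not a different argument.
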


\begin{proof}
We begin by checking that we have a cotorsion pair.

Let $X \in {}^\bot  (\mathcal{GF}_{ac}^\bot )$. Since $\mathcal{GF}_{ac}$ is covering there is an exact sequence $0 \rightarrow A \rightarrow B \rightarrow X \rightarrow 0$ with $B \in \mathcal{GF}_{ac}$ and  $A \in \mathcal{GF}_{ac}^\bot$. Then $\Ext^1(X, A) =0$, so we have $B \simeq A \oplus X$ and so $X \in \mathcal{GF}_{ac}$.

The cotorsion pair is complete because $\mathcal{GF}_{ac}$ is covering and any cover is a special precover, so there are enough projectives. From Lemma \eqref{GF-ac-proj-resolv}, we get that the class of Gorenstein AC-flat modules is closed under kernels of epimorphisms, and hence the cotorsion pair is also hereditary.
\end{proof}

We also prove an analogue result of \cite[Theorem 3.5]{bennis:07:strongly}.

\begin{theorem}  \label{charac-of-GF-ac-via-SGF-ac}
Let $R$ be any ring.
\begin{enumerate}[leftmargin=*]
\item If a module is Gorenstein AC-flat, then it is a direct summand of a strongly Gorenstein AC-flat module. \label{direct-summand-SGF-ac}
\item If the ring $R$ is such that the class of Gorenstein AC-flat modules is closed under extensions, then every direct summand of a Gorenstein AC-flat module is Gorenstein AC-flat.  \label{closed-extensions-gives-direct-summand}
\end{enumerate}

\end{theorem}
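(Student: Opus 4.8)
The plan is to mimic the argument of \cite[Theorem 3.5]{bennis:07:strongly}, transporting it from the Gorenstein projective (or Gorenstein flat) setting to the Gorenstein AC-flat setting, with ``injective'' systematically replaced by ``absolutely clean'' and the relevant orthogonality expressed via the tensor product with absolutely clean right $R$-modules. For part \eqref{direct-summand-SGF-ac}, I would start with a Gorenstein AC-flat module $M$, so that there is an exact complex of flats $\textbf{F} = \cdots \to F_1 \to F_0 \to F_{-1} \to \cdots$ which stays exact after tensoring with any absolutely clean module and has $M$ as a cycle. The idea is to build a single ``periodic'' complex out of a direct sum of shifts of $\textbf{F}$: take the $\mathbb{Z}$-indexed coproduct $\textbf{G} = \bigoplus_{k \in \mathbb{Z}} \textbf{F}[k]$ of all integer shifts of $\textbf{F}$ and reorganize it so that it becomes a complex of the form $\cdots \to \widetilde F \xrightarrow{\partial} \widetilde F \xrightarrow{\partial} \widetilde F \to \cdots$ with $\widetilde F = \bigoplus_{i \in \mathbb{Z}} F_i$ a flat module. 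This complex is exact (coproducts of exact complexes are exact), and since the tensor functor $A \otimes -$ commutes with coproducts, $A \otimes \textbf{G}$ remains exact for every absolutely clean $A$. Moreover $Z_0(\textbf{G}) = \bigoplus_{k} Z_k(\textbf{F})$ contains $M$ as a direct summand (take the $k=0$ summand). Hence $M$ is a direct summand of the strongly Gorenstein AC-flat module $Z_0(\textbf{G})$.

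For part \eqref{closed-extensions-gives-direct-summand}, I would invoke the structural results already established under the hypothesis that $\mathcal{GF}_{ac}$ is closed under extensions: by Lemma \ref{GF-ac-proj-resolv} the class $\mathcal{GF}_{ac}$ is projectively resolving, and it is clearly closed under countable (indeed arbitrary) direct sums since a countable coproduct of exact complexes of flats that stay exact under $A \otimes -$ is again such a complex, and coproducts of cycles are cycles. Then the cited result \cite[Proposition 1.4]{holm:05:gor.dim} — a projectively resolving class closed under countable direct sums is automatically closed under direct summands — yields immediately that every direct summand of a Gorenstein AC-flat module is Gorenstein AC-flat. This half is essentially formal once Lemma \ref{GF-ac-proj-resolv} is in hand.

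The main obstacle is in part \eqref{direct-summand-SGF-ac}: one must be careful that the reindexing of $\bigoplus_{k} \textbf{F}[k]$ into a genuinely $d$-periodic complex $\cdots \to \widetilde F \to \widetilde F \to \cdots$ is set up correctly, so that a single flat module $\widetilde F$ and a single differential $\partial$ with $\partial^2 = 0$ appear in every spot, and that the chosen cycle really is $Z_0$ and really contains the original $M$ as a summand. This is the same bookkeeping as in \cite[Theorem 3.5]{bennis:07:strongly}; the only points needing verification in our context are that $\widetilde F$ is flat (a coproduct of flats) and that exactness is preserved after applying $A \otimes -$ for $A$ absolutely clean (which holds because $-\otimes-$ commutes with direct sums and direct sums of exact complexes are exact). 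Once these observations are recorded, the proof goes through verbatim.
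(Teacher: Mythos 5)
Your proof of part \eqref{direct-summand-SGF-ac} is essentially identical to the paper's: both take the direct sum $\bigoplus_{k\in\mathbb{Z}}\textbf{F}[k]$ of all shifts of the defining complex, observe that flatness, exactness, and exactness after applying $A\otimes -$ are all preserved under coproducts, and read off $M$ as a summand of the $0$-th cycle of the resulting periodic complex.

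For part \eqref{closed-extensions-gives-direct-summand} you take a slightly different route from the paper. The paper first establishes (under the closure-under-extensions hypothesis) that $(\mathcal{GF}_{ac},\mathcal{GF}_{ac}^{\bot})$ is a complete hereditary cotorsion pair, and then argues: if $M$ is a summand of $G\in\mathcal{GF}_{ac}$, then $\Ext^1(M,K)$ is a summand of $\Ext^1(G,K)=0$ for every $K\in\mathcal{GF}_{ac}^{\bot}$, so $M\in{}^{\bot}(\mathcal{GF}_{ac}^{\bot})=\mathcal{GF}_{ac}$. You instead apply Lemma~\ref{GF-ac-proj-resolv} (projectively resolving) together with closure under countable direct sums and invoke \cite[Proposition 1.4]{holm:05:gor.dim} directly (the Eilenberg-swindle criterion). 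Both are correct, but your route is arguably more economical: the paper recalls exactly this Holm result immediately before the theorem, and in fact its verification that $(\mathcal{GF}_{ac},\mathcal{GF}_{ac}^{\bot})$ is a cotorsion pair already tacitly uses closure of $\mathcal{GF}_{ac}$ under summands (to pass from $B\cong A\oplus X$ with $B\in\mathcal{GF}_{ac}$ to $X\in\mathcal{GF}_{ac}$), i.e.\ the same Holm input you use. So your argument goes straight to the point and avoids the appearance of circularity, while the paper's phrasing packages the fact inside the cotorsion-pair machinery, which it wants to exhibit anyway for other purposes.
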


\begin{proof}
\eqref{direct-summand-SGF-ac} Let $M$ be a Gorenstein AC-flat module. Then there exists an exact complex of flat modules
\[
\textbf{F} = \; \cdots \rightarrow F_1 \rightarrow F_0 \rightarrow F_{-1} \rightarrow \cdots
\]
 such that $A \otimes \textbf{F}$ is exact for any absolutely clean $R$-module $A$ and such that $M = \kker { F_0 \rightarrow F_{-1}}$.  Consider the complex $\textbf{X} = \bigoplus_{n \in Z} \textbf{F}[n]$, where $\textbf{F}[n]$ is the complex $\textbf{F}$ shifted by $n$ with the corresponding differential.
 %; here $\textbf{F}[n]$ denotes the complex such that $\textbf{F}[n]_m = \textbf{F}_{m+n}$ and boundary operators are $(-1)^{n}d_{m+n}$. 
 Then $\textbf{X}$ is still an exact complex, with the property that $A \otimes \textbf{X}$ is still exact for any absolutely clean $R$-module $A$, and $\textbf{X}$ is of the form
 \[
 \textbf{X} = \cdots \rightarrow T \rightarrow T \rightarrow T \rightarrow \cdots.
 \]
 Then $\kker {T \rightarrow T}$ is a strongly Gorenstein AC-flat module and $M$ is a direct summand of $\kker{T \rightarrow T}$.

\eqref{closed-extensions-gives-direct-summand} Since the class of Gorenstein AC-flat modules is closed under extensions, we have that $(\mathcal{GF}_{ac}, \mathcal{GF}_{ac}^\bot)$ is a complete hereditary cotorsion pair.

Let $G$ be a Gorenstein AC-flat module, and let $M$ be a direct summand of $G$. For any $K \in  \mathcal{GF}_{ac}^\bot$ we have $\Ext^1 (G,K)=0$. Since $\Ext^1(M,K)$ is a direct summand of $\Ext^1(G,K)$, it follows that $\Ext^1(M,K) = 0$ for all $K \in  \mathcal{GF}_{ac}^\bot$. Thus $M \in \mathcal{GF}_{ac}$.
\end{proof}

\begin{corollary}
Let $R$ be a ring such that the class of Gorenstein AC-flat modules is closed under extensions. Then a module is Gorenstein AC-flat if and only if it is a direct summand of a strongly Gorenstein AC-flat module.
\end{corollary}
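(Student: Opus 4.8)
The plan is to read this corollary off directly from Theorem \ref{charac-of-GF-ac-via-SGF-ac}: under the standing hypothesis that $\mathcal{GF}_{ac}$ is closed under extensions, the two parts of that theorem together say precisely that $\mathcal{GF}_{ac}$ coincides with the class of direct summands of strongly Gorenstein AC-flat modules, so the proof is just a matter of assembling those two implications and filling one small gap. For the forward implication, if $M$ is Gorenstein AC-flat, then part \eqref{direct-summand-SGF-ac} of Theorem \ref{charac-of-GF-ac-via-SGF-ac} --- which holds over an arbitrary ring, with no closure hypothesis needed --- already exhibits $M$ as a direct summand of a strongly Gorenstein AC-flat module; here I would simply cite it.

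For the converse, suppose $M$ is a direct summand of a strongly Gorenstein AC-flat module $N$. The only thing to observe is that $N$ is itself Gorenstein AC-flat: by definition $N = \kker{F \xrightarrow{d} F}$ for some exact complex $\cdots \to F \xrightarrow{d} F \xrightarrow{d} F \to \cdots$ of flat modules that remains exact after applying $A \otimes -$ for every absolutely clean right $R$-module $A$, and $N$ is tautologically one of the cycles of that complex (all cycles of this periodic complex equal $\kker d$), so $N \in \mathcal{GF}_{ac}$. Since $\mathcal{GF}_{ac}$ is closed under extensions by hypothesis, part \eqref{closed-extensions-gives-direct-summand} of Theorem \ref{charac-of-GF-ac-via-SGF-ac} applies to the summand $M$ of $N$ and yields $M \in \mathcal{GF}_{ac}$.

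I expect no genuine obstacle here; all the real content is in Theorem \ref{charac-of-GF-ac-via-SGF-ac}, and the only step that is not purely mechanical is the remark that a strongly Gorenstein AC-flat module is Gorenstein AC-flat --- which is itself immediate from comparing the two definitions, once one notes that the periodic complex witnessing strong Gorenstein AC-flatness has exactly the shape demanded in the definition of a Gorenstein AC-flat module.
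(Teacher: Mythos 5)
Your proof is correct and is exactly the argument the paper intends: the corollary is a direct assembly of the two parts of Theorem \ref{charac-of-GF-ac-via-SGF-ac}, together with the (correctly supplied) observation that a strongly Gorenstein AC-flat module is Gorenstein AC-flat so that part \eqref{closed-extensions-gives-direct-summand} applies.
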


\bibliographystyle{alpha}
\bibliography{Duality}

\end{document}